\numberwithin{equation}{section}
\newtheorem{theorem}{Theorem}[section]
\newtheorem{definition}[theorem]{Definition}
\newtheorem{proposition}[theorem]{Proposition}
\newtheorem{corollary}[theorem]{Corollary}
\newtheorem{lemma}[theorem]{Lemma}
\newtheorem{remark}[theorem]{Remark}
\newtheorem{example}[theorem]{Example}
\newcommand{\cali}[1]{\mathscr{#1}}
\newcommand{\supp}{{\rm supp}}
\newcommand{\const}{{\rm const}}
\newcommand{\dist}{{\rm dist}}
\newcommand{\dbar}{{\overline\partial}}
\newcommand{\ddbar}{{\partial\overline\partial}}
\newcommand{\rank}{{\rm  rank}}
\renewcommand{\Re}{{\rm Re}}
\newcommand{\Fc}{\cali{F}}
\newcommand{\C}{\mathbb{C}}
\renewcommand{\P}{\mathbb{P}}
\title{Levi Problem in Complex Manifolds}
\author{Nessim Sibony}
\date{{\it In memory of Raghavan Narasimhan}}
\begin{document}

\maketitle

\begin{abstract}
Let  $U $ be a  pseudoconvex open set in a complex manifold $M$. When  is $U$ a Stein manifold?
  There are classical counter examples due to Grauert, even when $U$ has real-analytic boundary or has strictly pseudoconvex points.
  We give new criteria for the Steinness of $U$ and we analyze the obstructions.  The main tool is 
  the notion of Levi-currents. They are  positive $\ddbar$-closed
 currents $T$ of bidimension $(1,1)$ and of mass $1$  directed by the directions where all continuous psh functions in $U$ have vanishing Levi-form. The extremal ones, are supported on the sets where all continuous psh functions are constant.
  We also  construct under geometric conditions, bounded strictly psh exhaustion functions, and hence we obtain Donnelly- Fefferman weights.
  To any infinitesimally homogeneous manifold, we associate a foliation. The dynamics of the foliation 
  determines the solution of the Levi-problem. 
  Some of the results can be extended to the context of pseudoconvexity with respect to a Pfaff-system.
    \end{abstract}

\noindent
{\bf Classification AMS 2010:} Primary: 32Q28, 32U10;  32U40; 32W05;  Secondary 37F75\\
\noindent
{\bf Keywords:}  Levi-problem,  $\ddbar$-closed currents, foliations.


 \section{Introduction} \label{intro}

 
 Let $(M,\omega)$ be  a  complex   Hermitian  manifold of dimension $n.$
 Let $U\Subset M$ be a relatively compact domain with smooth boundary. We can  assume that  
 $
 U:=\left\lbrace z\in U_1:\  r(z)<0  \right\rbrace,
 $
 where $r$ is a function of class $\mathcal C^\infty$ in a neighborhood $U_1$ of  $\overline{U},$
 such that   $dr$ is  non-vanishing on $\partial U.$ Recall that $U$ is  pseudoconvex if the Levi form
 of $r$ is nonnegative on the complex tangent  vectors  to $\partial U.$ More precisely,
 \begin{equation}\label{e:Levi}
 \langle i\ddbar r(z),it\wedge\bar{t} \rangle\geq 0\qquad\text{if}\qquad  \langle\partial r(z),t  \rangle=0.
 \end{equation}
 Condition \eqref{e:Levi} is  independent of the choice of $r.$
 
 The  Levi problem  is,  whether  a pseudoconvex domain is Stein, i.e., biholomorphic to a complex submanifold of $\C^N,$   see  \cite{Ho}.
 
 Grauert has   characterized  Stein manifolds as  follows.
 \begin{theorem} {\rm (\cite{G1})}
 A complex manifold $M$ is  Stein iff  there is a  strictly psh  exhaustion  function on $M.$
 \end{theorem}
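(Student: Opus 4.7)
The plan is to prove the two implications separately; the nontrivial direction is the ``if'' direction, which is the heart of the Levi problem.

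For the ``only if'' direction, suppose $M$ is biholomorphic to a closed complex submanifold $\iota\colon M\hookrightarrow \C^N$. Define $\varphi(z):=\|\iota(z)\|^2$. The function $z\mapsto \|z\|^2$ on $\C^N$ is strictly plurisubharmonic since $i\ddbar\|z\|^2 = i\sum dz_j\wedge d\bar z_j$ is the standard K\"ahler form, and strict plurisubharmonicity is preserved under restriction to a complex submanifold because the real Hessian restricted to a complex tangent direction $t$ equals $\langle i\ddbar\|z\|^2,it\wedge\bar t\rangle$, which remains positive. Finally, closedness of $\iota(M)$ in $\C^N$ makes $\varphi$ exhaustive on $M$.

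For the ``if'' direction, let $\varphi\colon M\to\R$ be a $\mathcal C^\infty$ strictly psh exhaustion. The sublevel sets $M_c:=\{\varphi<c\}$ are relatively compact, exhaust $M$, and, after smoothing $\varphi$ slightly near $\partial M_c$ if needed, are strongly pseudoconvex. The strategy is to solve $\dbar$ with good estimates so as to obtain the three defining properties of Stein manifolds: holomorphic convexity, separation of points by global holomorphic functions, and local embedding via global holomorphic functions. The natural tool is H\"ormander's $L^2$-method applied directly on $M$ with the weight $e^{-\chi\circ\varphi}$ where $\chi$ is a rapidly increasing convex function; strict plurisubharmonicity of $\varphi$ makes the weighted Laplacian strictly positive on $(0,1)$-forms with values in the trivial bundle, and so $\dbar u=f$ is solvable for any $\dbar$-closed $f$ with suitable $L^2$ bounds. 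Equivalently, one can use Grauert's bumping procedure to propagate vanishing of $H^1$ from a strongly pseudoconvex $M_c$ (where finiteness is given by Andreotti--Grauert) out to all of $M$.

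Given such $\dbar$-solvability, the three Stein properties are obtained by standard constructions: to separate two points or construct local coordinates near $p$, one starts from a local holomorphic datum (a smooth function equal to the datum in a neighborhood), forms $f:=\dbar$ of a cut-off product, and solves $\dbar u=f$ with weight chosen to force $u$ to vanish to sufficient order at $p$; then the difference is a global holomorphic function with prescribed behavior. Holomorphic convexity follows from the exhaustion property of $\varphi$: given $K\Subset M$, choose $c$ with $K\subset M_c$ and for each $q\notin M_c$ produce a global holomorphic $h$ with $|h(q)|>\sup_K|h|$, again by the $\dbar$-trick. The hard step --- and the main obstacle --- is the global $\dbar$-solvability itself; the delicate point is controlling the weight so that the $L^2$-solution satisfies the vanishing conditions at the prescribed point while still being square integrable, which requires the strict positivity of $i\ddbar(\chi\circ\varphi)$ and a careful choice of $\chi$.
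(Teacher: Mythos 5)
The paper does not prove this statement: it is quoted verbatim from Grauert \cite{G1} as background for the Levi problem, so there is no in-paper argument to compare yours against. Judged on its own, your outline follows the standard modern route (H\"ormander's weighted $L^2$-method, or equivalently Grauert's bumping plus the Andreotti--Grauert finiteness theorem) and is correct as a plan, but it is a plan rather than a proof: the entire analytic content of the ``if'' direction is delegated to the global $\dbar$-solvability statement, which you invoke but do not establish. On a general complex manifold this requires either constructing a complete K\"ahler metric from $i\ddbar(\chi\circ\varphi)$ or running the exhaustion/approximation argument over the strongly pseudoconvex sublevel sets $M_c$ and passing to the limit (Runge-type approximation between $M_c$ and $M_{c'}$), neither of which is free. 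Your ``only if'' direction is fine, with the correct observation that closedness of the image is what makes $\|\iota(z)\|^2$ proper.

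Two further gaps worth naming. First, the paper's definition of Stein is ``biholomorphic to a closed complex submanifold of $\C^N$,'' whereas your argument terminates at the three axioms (holomorphic convexity, separation of points, local coordinates from global functions); closing that gap requires Remmert's embedding theorem, which you do not mention. Second, the hypothesis only gives a strictly psh exhaustion, not a smooth one; before applying any $L^2$ or Morse-theoretic machinery you need Richberg's approximation theorem (cited in the paper as \cite{R}) to replace $\varphi$ by a $\mathcal C^\infty$ strictly psh exhaustion. Neither gap is fatal --- both are standard --- but a complete proof must include them.
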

 Narasimhan  has given  a  similar  characterization  for Stein  spaces \cite{N1}.

The Levi problem  admits a  positive   solution  in many cases, in particular, when $M$ is $\C^n$
or $\P^n.$ We refer to the  surveys by  Narasimhan \cite{N2},  Siu \cite{Siu}, PeternellÊ\cite {P}  and to the recent 
discussion by Ohsawa \cite{O}. See also  the book by H\"ormander \cite{Ho}. 

In  the  general case, Grauert has given two  remarkable examples.
Let $M:=\C^n/\Lambda$ be  a complex torus. Assume that $e_1:=(1,0,\ldots,0)$ is  the  first  vector 
in the lattice $\Lambda.$ Let $\pi:\  \C^n\to M$ denote the canonical projection.
Then  $U:=\pi(0<\Re z_1<1/2)$ is  pseudoconvex (Levi-flat) and $U$ is  not Stein.    Indeed,the compact set  $\pi(\Re z_1=1/4)$ is foliated by images of $\C,$ hence  holomorphic  functions  in $U$  which are  necessarily bounded
 on such images  are constant. 
 
   Hirschowitz has   analyzed such examples by introducing the  notion of 
 infinitesimally  homogeneous manifolds. A manifold $M$ is infinitesimally  homogeneous if the  global
 holomorphic vector  fields  generate the  tangent space at  every point of $M.$ He then  showed \cite{H1,H2}
 \begin{theorem}{\rm (Hirschowitz) }  Let $U$
 be a  domain in an  infinitesimally homogeneous manifold. Assume $U$ satisfies the Kontinuit\"atssatz.  Then $U$  admits a continuous 
 psh  exhaustion function. If moreover $U$ does not contain a holomorphic image of  $\C,$ which is  relatively compact in $U,$ then  $U$ is  Stein.
 \end{theorem}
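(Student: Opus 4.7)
The plan is to prove the two assertions in sequence. The first requires only infinitesimal homogeneity and the Kontinuit\"atssatz; the second adds the entire-curve hypothesis to produce a strictly psh exhaustion, after which Grauert's theorem closes the argument.

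\smallskip
\noindent\emph{Phase 1: continuous psh exhaustion.} Since $M$ is infinitesimally homogeneous, I would fix global holomorphic vector fields $X_1,\dots,X_N$ on $M$ whose values span $T_zM$ at every $z\in M$. For $(z,t)\in M\times\C^N$ with $|t|$ small set
\[
\Phi(z,t):=\exp\Bigl(\sum_j t_j X_j\Bigr)(z),
\]
and for $z\in U$ define the Hartogs radius
\[
\delta(z):=\sup\bigl\{\,r>0 : \Phi(\{z\}\times\overline{\B}_r)\Subset U\,\bigr\}.
\]
I would then check that $\delta$ is positive and lower semicontinuous on $U$ and tends to $0$ at $\partial U$, using that $\Phi$ is submersive at $t=0$ by the spanning condition. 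The core step is to show that $v:=-\log\delta$ is plurisubharmonic, by the classical Hartogs-figure argument: for any holomorphic disk $f\colon\overline{\D}\to U$ and any $g$ holomorphic on $\overline{\D}$ with $|g(e^{i\theta})|\le\delta(f(e^{i\theta}))$, the analytic family
\[
(\lambda,t_0)\;\longmapsto\;\Phi\bigl(f(\lambda),g(\lambda)\,t_0\bigr),\qquad \lambda\in\overline{\D},\ |t_0|\le 1,
\]
has its $\{|\lambda|=1\}$-boundary in a compact subset of $U$; the Kontinuit\"atssatz hypothesis forces the whole family into $U$, giving $\delta(f(0))\ge|g(0)|$. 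Taking the supremum over such $g$ via Poisson integrals of $\log\delta\circ f$ yields the sub-mean-value inequality for $\log\delta\circ f$, so $v$ is psh. Standard regularization (convolution from above followed by Richberg) turns $v$ into a \emph{continuous} psh exhaustion, settling the first assertion.

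\smallskip
\noindent\emph{Phase 2: strict plurisubharmonicity and Steinness.} Assume $U$ contains no relatively compact holomorphic image of $\C$. By Grauert's theorem it suffices to build a smooth strictly psh exhaustion. I would first replace $v$ by a smooth psh exhaustion $\widetilde v$ via Richberg's theorem, then eliminate its degeneracy set
\[
D:=\{z\in U :\ \ddbar\widetilde v(z)\text{ has nontrivial kernel}\}.
\]
On $D$, the kernel distribution of $\ddbar\widetilde v$ is holomorphic and integrable, and integrates to a holomorphic foliation $\Fc$. The decisive claim is that, under the entire-curve hypothesis, no leaf of $\Fc$ has compact closure in $U$: since the global fields $X_j$ span $TM$, any nonzero direction in the kernel at $z_0\in L$ can be realized as the value at $z_0$ of a global vector field $Y$, and flowing by $Y$ produces a nonconstant holomorphic map $\C\to U$ with image contained in $\overline L$; if $\overline L\Subset U$ this contradicts the hypothesis. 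Since every leaf of $\Fc$ therefore escapes every compact subset of $U$, I can perturb $\widetilde v$ on an exhausting sequence $\{\widetilde v<c_k\}\Subset U$ by a locally finite sum $\sum_k\varepsilon_k\chi_k\psi_k$, where $\psi_k$ are smooth functions strictly psh transverse to $\Fc$ on small charts and $\chi_k$ are cutoffs. The result is a smooth strictly psh exhaustion, and Grauert's theorem concludes.

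\smallskip
\noindent\emph{Main obstacle.} The delicate point is the Brody-type step of Phase 2: that a leaf of the degenerate foliation with compact closure in $U$ produces a relatively compact entire curve \emph{inside} $U$, not merely inside $\overline U$ or $M$. The exhaustion property of $\widetilde v$ is essential to confine the rescaled limit to some sublevel set $\{\widetilde v<c\}\Subset U$, and the infinitesimal homogeneity is what allows the kernel direction to be flowed globally rather than only locally. This is precisely where the dynamics of the foliation associated with the infinitesimally homogeneous structure, anticipated in the abstract, turns the mere existence of a continuous psh exhaustion into the \emph{strict} psh exhaustion needed for Steinness.
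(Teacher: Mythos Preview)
Your Phase~1 is sound, though the paper's Section~5 refinement of Hirschowitz uses a simpler one-parameter version: for each global field $Z$ it defines $d_Z(z)$ via the flow $g_Z(z,\zeta)$ in a single complex variable, shows $-\log d_Z$ is psh by checking that the lifted domain $U_Z\subset M\times\C$ inherits the Kontinuit\"atssatz, and then takes $\sup_Z(-\log d_Z)$ over finitely many $Z$ transverse to $\partial U$. Your multi-parameter Hartogs radius works too but is heavier and needs more care with the regularization step.

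Phase~2 has a genuine gap. The kernel of $i\ddbar\widetilde v$ for a \emph{single} smooth psh function is not a holomorphic distribution, let alone integrable: already for $\widetilde v=|z|^2+|w|^4$ in $\C^2$ the degeneracy locus is $\{w=0\}$ with kernel $\partial_w$, which does not extend to a foliation of any open set, and in general the kernel rank jumps and the directions vary non-holomorphically. So your $\Fc$ is not well-defined. The entire-curve step then collapses too: flowing a global field $Y$ with $Y(z_0)\in\ker i\ddbar\widetilde v(z_0)$ gives no reason for the orbit to remain in $D$, in a leaf, or in any compact subset of $U$; nothing forces $\widetilde v$ to be constant along that orbit.

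The paper's remedy is to work not with one function but with the common null cone
\[
\mathcal N_z=\bigl\{t_z:\ \langle i\ddbar u(z),it_z\wedge\bar t_z\rangle=0\ \text{for every continuous psh }u\bigr\}.
\]
Infinitesimal homogeneity then does the real work. First, any $t\in\mathcal N_{z_0}$ is $Z(z_0)$ for some global $Z$, and since precomposing $u$ with the flow of $Z$ preserves psh-ness, the orbit of $Z$ stays in $\mathcal N$. Second, applying the null condition to $e^u$ forces $\langle\dbar u,Z\rangle=0$, so every continuous psh $u$---in particular the exhaustion $\varphi$---is constant along the orbit; hence the orbit sits in a compact level set $\{\varphi=c\}$ and is complete, producing the relatively compact image of $\C$. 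Third, a Jacobi-identity computation with global fields shows $\mathcal N$ is closed under Lie bracket, so it is an honest rank-$d$ holomorphic foliation. Under your no-entire-curve hypothesis one gets $d=0$, i.e.\ $\mathcal N=0$, and from there a strictly psh exhaustion is built directly. Your sketch had the right instinct (global fields, flows, foliation) but attached it to the wrong object.
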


 A  second  example of Grauert \cite{G2} shows that the  boundary  of $U$  can be   strictly pseudoconvex
except on a small set and still, $U$ is  not Stein. In the present article  we analyze the  obstructions of being Stein for pseudoconvex  domains  with   smooth  boundary. Our main tool is the  notion of Levi currents.
With  the  previous notations,  a  positive  current $T$ of  bidimension $(1,1)$ in $M,$ supported 
on $\partial U$
is  a Levi current if it satisfies the  following  Pfaff  system

\begin{equation}\label{e:Pfaff}
T\wedge \partial r=0,\qquad T\wedge \ddbar r=0,\qquad i\ddbar T=0,\qquad \langle T,\omega\rangle
=1.\end{equation} 
   Observe  that the support of the Levi current is  very restricted, and that  it is directed by the null space of the Levi form. We then  obtain the   following result.
   
   \begin{theorem}\label{T:main_1}
   Let $U\Subset M$ be   a  pseudoconvex  domain with smooth  boundary.  If $\partial U$ has  no  Levi current, then $U$ is  a  modification of  a Stein  manifold. Moreover, there is   a smooth function $v,$
   such that if  $\rho:=re^{-v},$ there is  $\eta>0$ such that the function $\hat\rho:=-(-\rho)^\eta$
   satisfies $i\ddbar\hat\rho\geq  c|\hat\rho|\omega$ on  $U\setminus K,$ where  $K$ is compact.
   \end{theorem}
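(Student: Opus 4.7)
The plan proceeds in three stages. First, a Hahn--Banach / Sullivan-type duality converts the absence of a Levi current on $\partial U$ into the existence of a smooth real function $v$ whose Levi form is bounded below by a positive constant on every $(1,0)$-tangent direction to $\partial U$ annihilating both $\partial r$ and $\Lc_r$. Second, a Diederich--Fornaess-type computation applied to $\rho:=re^{-v}$ and $\hat\rho:=-(-\rho)^\eta$ produces the claimed strictly plurisubharmonic estimate for small enough $\eta>0$. Third, Remmert reduction combined with the pseudoconvexity of $U$ exhibits $U$ as a modification of a Stein manifold.

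\textbf{Duality step.} Let $\Cc$ denote the convex set of positive currents $T$ of bidimension $(1,1)$ on a neighborhood of $\overline U$, supported on $\partial U$, satisfying $T\wedge\partial r=0$ and $T\wedge\ddbar r=0$, and normalized by $\langle T,\omega\rangle=1$. Positivity and the mass bound make $\Cc$ weakly compact and convex. The hypothesis says $\Cc$ misses the closed linear subspace $L:=\{T:\ddbar T=0\}$. A strong Hahn--Banach separation provides a smooth $(1,1)$-form $\alpha$ with $\langle T,\alpha\rangle=0$ on $L$ and $\langle T,\alpha\rangle\ge c>0$ on $\Cc$. A local $\ddbar$-Poincar\'e lemma on a thin tubular neighborhood of $\partial U$ promotes the first condition to $\alpha=i\ddbar v$ for a smooth real $v$, and the second condition, disintegrated against an arbitrary $T\in\Cc$, yields $\Lc_v(t,\bar t)\ge c|t|^2$ at every $p\in\partial U$ and every $(1,0)$-tangent $t$ at $p$ lying in $\ker\partial r(p)\cap\ker\Lc_r(p)$.

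\textbf{Diederich--Fornaess step.} Set $\rho:=re^{-v}$ and $\hat\rho:=-(-\rho)^\eta$, both smooth near $\overline U$, and note that $\rho$ is still a defining function of $U$. The classical identity
\begin{equation*}
i\ddbar\hat\rho=\eta(-\rho)^{\eta-1}\left[i\ddbar\rho+(1-\eta)\,\frac{i\partial\rho\wedge\dbar\rho}{-\rho}\right]
\end{equation*}
reduces the inequality $i\ddbar\hat\rho\ge c|\hat\rho|\omega$ to showing that the bracket dominates $c(-\rho)\omega$. Expanding with $a:=\partial r\cdot t$ and $b:=\partial v\cdot t$, the value of the bracket on $(t,\bar t)$ equals
\begin{equation*}
e^{-v}\left(\Lc_r(t,\bar t)+(-r)\Lc_v(t,\bar t)+(1-\eta)\,\frac{|a|^2}{-r}-2\eta\Re(a\bar b)-\eta(-r)|b|^2\right).
\end{equation*}
On vectors close to the complex normal, the term $(1-\eta)|a|^2/(-r)$ has order $(-r)^{-1}$ and dominates every other contribution. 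On complex tangent vectors, where $a=0$, the expression reduces to $e^{-v}(\Lc_r+(-r)(\Lc_v-\eta|b|^2))$, which by the estimate of the previous step is $\ge c'(-r)|t|^2$ uniformly, provided $\eta$ is small enough that $\Lc_v-\eta|b|^2\ge c''$. The mixed regime is handled by absorbing $2\eta|\Re(a\bar b)|$ into the two positive terms via Cauchy--Schwarz. Thus for some fixed $\eta>0$ the bracket is $\ge c(-\rho)\omega$ on a one-sided neighborhood $W$ of $\partial U$, giving the required bound on $U\setminus K$ with $K:=\overline U\setminus W$.

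\textbf{Conclusion and main obstacle.} The strict plurisubharmonicity on $U\setminus K$ forces every compact analytic subvariety of $U$ to lie in the compact set $K$. Remmert reduction applied to the pseudoconvex domain $U$ then collapses the maximal connected compact analytic subsets to points and yields a proper holomorphic map $\pi\colon U\to U^\ast$ onto a Stein manifold which is a biholomorphism outside a proper analytic subvariety of $U$ --- the claimed modification. The hardest part of the proof is the duality step: one must carefully set up the locally convex topology on currents so that $\Cc$ is weakly compact, establish the Hahn--Banach separation, and promote the separating smooth $(1,1)$-form to the $i\ddbar$ of a smooth function via a local $\ddbar$-Poincar\'e lemma near $\partial U$. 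This is the step where the finely tuned definition of Levi current (and its Pfaff formulation) enters decisively.
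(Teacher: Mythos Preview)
Your duality step yields too little. By building the constraints $T\wedge\partial r=0$ and $T\wedge\ddbar r=0$ into the convex set $\Cc$, the separating form $i\ddbar v$ is only tested against currents directed by the null space of the Levi form, and so you only obtain $\Lc_v(t,\bar t)\ge c|t|^2$ for $t\in\ker\partial r(p)\cap\ker\Lc_r(p)$. But in the Diederich--Forn{\ae}ss step you then claim, for arbitrary complex-tangent $t$, that $\Lc_v-\eta|b|^2\ge c''$; this is not justified. On directions where $\Lc_r(t,\bar t)>0$ is small, $\Lc_v(t,\bar t)$ may be very negative, and the compensation you need from $\Lc_r$ is governed by the smallest \emph{positive} eigenvalue of $\Lc_r|_{\ker\partial r}$, which can tend to zero as $p$ varies along $\partial U$. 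The estimate $\Lc_r+(-r)\Lc_v\ge c'(-r)|t|^2$ therefore fails to be uniform near points where the rank of $\Lc_r$ drops.

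The paper closes this gap by a short but decisive observation you omit: \emph{every} positive $\ddbar$-closed current of mass one supported on $\partial U$ is automatically a Levi current. Indeed, pairing $T$ with $i\ddbar r^2$ gives $T\wedge\partial r=0$, and then pairing with $i\ddbar(\chi r)$ together with pseudoconvexity gives $T\wedge i\ddbar r=0$. Consequently one may take the \emph{larger} convex set $\Cc=\{T\ge 0,\ \supp T\subset\partial U,\ \langle T,\omega\rangle=1\}$ and separate it from the $\ddbar$-closed currents; the resulting $v$ is then strictly psh in \emph{all} directions near $\partial U$, i.e.\ $i\ddbar v\ge c\,\omega$, and the Diederich--Forn{\ae}ss computation goes through exactly as in Theorem~\ref{T:2.2}. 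A minor additional point: the passage from ``$\alpha$ annihilates all $\ddbar$-closed currents'' to ``$\alpha=i\ddbar v$'' is not a local $\ddbar$-Poincar\'e lemma; it is the bipolar identity $(\{i\ddbar u\}^{\perp})^{\perp}=\overline{\{i\ddbar u\}}$ together with compactness of $\Cc$, which lets you replace $\alpha$ by a nearby $i\ddbar v$.
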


   Clearly,  when $M=\C^n,$ or a Stein manifold,  Levi  currents  do not exist. Indeed,  positive  currents  with compact  support, satisfying  $i\ddbar T=0,$
   are necessarily $0.$ So the  last part of the  above theorem  is an extension of the Diederich-Forn{\ae}ss theorem \cite{DF1} which considers the case where $M=\C^n.$ A crucial point in  their proof is that, 
   for a  pseudoconvex  domain $U$ in $\C^n,$  the  function $-\log\dist(\cdot,\C^n\setminus \overline U)$
   is  psh. This  tool is  not available  here. 
   
   A similar result was  proved when $M=\P^n$ or more generally
    for manifolds of  positive holomorphic sectional  curvature by Ohsawa and the author in \cite{OS}. It uses some  geometric
    inequalities  satisfied  by the  distance  to the  boundary due to Takeuchi and Elencwajg \cite{T,E}.
    
    The  interest  of constructing bounded exhaustion functions  satisfying the  above   estimates  is that the  function $\psi:=-\log(-\hat\rho)$ satisfies the Donnelly-Fefferman condition  and  is proper and  hence
one can apply their theorem  \cite{DoF}, see also \cite{B}.

When the   domain  $U$ has  real analytic  boundary  the   non-existence of Levi currents  is  equivalent to the   non-existence  of a germ of   holomorphic  curve on the   boundary of $U.$ This  uses  results
from  \cite{DF2}.

  In Section  2, after  proving the  above  results, we  address the   question of finding  bounded strictly 
  psh exhaustion function $\hat\rho,$  such that $\psi:=-\log(-\hat\rho)$ satisfies the Donnelly-Fefferman condition. We show in particular the following result.

  \begin{theorem}\label{T:main_2}
  Let $U\Subset M$  be a pseudoconvex domain with   smooth  boundary.  Assume  that there is a compact set $
  K\Subset U$ and  a  bounded  function  $v$ on $U\setminus K,$ such that $i\ddbar v\geq \omega$ on $U\setminus K.$ Then $U$ is  a  modification  of a Stein  manifold, and  admits  a bounded exhaustion function which is  strictly  psh  out of a compact set.   
 \end{theorem}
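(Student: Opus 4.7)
The strategy is to reduce Theorem~\ref{T:main_2} to Theorem~\ref{T:main_1} by showing that the hypothesis excludes Levi currents on $\partial U$. Once this is established, Theorem~\ref{T:main_1} supplies at once both the Stein modification and the bounded exhaustion strictly psh outside a compact set (in fact with the representation $\hat\rho = -(-re^{-w})^\eta$ and the Donnelly--Fefferman estimate). Thus the whole burden is the non-existence of Levi currents.

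First I would extend $v$ from $U\setminus K$ to a bounded continuous psh function $\tilde v$ on the whole of $U$, preserving $i\ddbar\tilde v\geq\omega$ on $U\setminus K'$ for a slightly enlarged compact set $K'\Subset U$. A standard gluing suffices: replace $v$ by $\max(v,c)$ for a constant $c<\inf_{\partial K'\cap U}v$ in a collar of $K$, extend by $c$ across $K$, and regularize slightly if continuity of the max is needed. The key output is a continuous psh function defined in a full neighborhood of $\partial U$ inside $U$ whose Levi form is uniformly $\geq\omega$ there.

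Next I would invoke the intrinsic characterization of Levi currents from Section~2, summarized in the introduction: a Levi current $T$ is directed by the complex lines $\ell$ along which \emph{every} continuous psh function on $U$ has vanishing Levi form. Applied to the continuous psh function $\tilde v$, this would force $T\wedge i\ddbar\tilde v=0$ in the appropriate Bedford--Taylor sense near $\partial U$. On the other hand, the pointwise inequality $i\ddbar\tilde v\geq\omega$ on $U\setminus K'$ combined with the fact that $T$ is supported on $\partial U$ (approached from the interior through the region where the inequality holds) yields $T\wedge i\ddbar\tilde v\geq T\wedge\omega$, hence a positive pairing of mass $\geq \langle T,\omega\rangle=1$. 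These two facts are incompatible, so no Levi current on $\partial U$ can exist and Theorem~\ref{T:main_1} concludes.

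The hard part will be giving rigorous meaning to the Bedford--Taylor product $T\wedge i\ddbar\tilde v$ when $T$ is concentrated on $\partial U$ while $\tilde v$ is defined only in $U$. Both the directedness statement and the comparison with $T\wedge\omega$ must be interpreted as interior limits: the natural approach is to approximate $\tilde v$ by smooth functions $\tilde v_\delta$ defined in a neighborhood of $\overline U$ which agree with $\tilde v$ on $\{r\leq -\delta\}$ and are merely controlled across $\partial U$, then pass to the limit $\delta\to 0$ using the finite mass of $T$, the boundedness of $\tilde v$, and the relations $T\wedge\partial r=0$, $T\wedge\ddbar r=0$ that control the boundary contribution of the correction terms. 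This boundary limit is exactly the kind of analysis already used in the proof of Theorem~\ref{T:main_1}, and with it in hand the proof is complete.
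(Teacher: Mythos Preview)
Your reduction goes in the wrong direction. In the paper, Theorem~\ref{T:main_2} is \emph{not} deduced from Theorem~\ref{T:main_1}; rather, both are immediate consequences of the direct construction in Theorem~\ref{T:2.2}. The hypothesis of Theorem~\ref{T:main_2} is exactly hypothesis~(i) of Theorem~\ref{T:2.2}, and the proof is the explicit computation showing that $\hat\rho:=-(-re^{-Av})^\eta$ satisfies $i\ddbar\hat\rho\gtrsim|\hat\rho|\omega$ on $U\setminus K$ for suitable $A,\eta$. No Levi currents enter at all. (Theorem~\ref{T:main_1} is obtained from Proposition~\ref{P:2.1}, which produces a smooth strictly psh $u$ in a \emph{two-sided} neighborhood of $\partial U$, and then Theorem~\ref{T:2.2} is applied with $v=u$.)

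Your proposed argument also has a genuine gap. The boundary Levi currents of \eqref{e:Pfaff} are defined by the relations $T\wedge\partial r=0$, $T\wedge i\ddbar r=0$, $i\ddbar T=0$; nothing in this definition says $T\wedge i\ddbar\tilde v=0$ for a psh function $\tilde v$ defined only on $U$. The sentence in the abstract you are quoting refers to the Section~\ref{S:Levi} notion of Levi currents \emph{inside} an open set, which is a different object. In the proof of Proposition~\ref{P:2.1} the pairing $\langle T,i\ddbar u\rangle=\langle i\ddbar T,u\rangle$ is a one-line integration by parts precisely because $u$ is a test function on $M$, defined on both sides of $\partial U$; there is no ``boundary limit analysis'' of the kind you invoke. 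Your $\tilde v$ lives only on $U$, so neither the pairing $T\wedge i\ddbar\tilde v$ nor the comparison with $T\wedge\omega$ has an obvious meaning on $\partial U$, and producing one would require substantial new work that the paper neither contains nor needs.
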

 
 In Section \ref{S:Steiness} we give   a criterion for  Steiness.
 In  Section \ref{S:Levi} we introduce the notion of  Levi currents on an arbitrary complex  manifold
 (not just on the   boundary  of a pseudoconvex  domain).  This  permits to prove  the following.
 \begin{theorem}\label{T:main_3}
 Let  $U\Subset M$ be a  pseudoconvex  domain with smooth boundary. Assume  it  admits a  continuous psh exhaustion function $\varphi.$ Assume  $U$  is  not a  modification of a Stein manifold and that it contains at most finitely many compact varieties of positive dimension. Then  there is  a number $t_0,$
 such that for every  $t>t_0$  the  level set $\{\varphi=t\}$ has a  Levi-current $T_t.$
 In particular,  each $T_t$ is  a positive $\ddbar$-closed  current of mass one  with compact support.
  \end{theorem}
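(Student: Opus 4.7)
The plan is to argue by contrapositive: I will show that if the conclusion fails, so that there exist arbitrarily large values $t$ for which $\{\varphi=t\}$ carries no Levi current, then $U$ must itself be a modification of a Stein manifold, contradicting the hypothesis.

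First, using the finiteness assumption on the compact positive-dimensional subvarieties $V_1,\ldots,V_k$ of $U$, I fix $t_0>0$ so that $V_j\Subset\{\varphi<t_0\}$ for every $j$; this is possible because each $V_j$ is compact and $\varphi$ is continuous. I then suppose that the set $E:=\{t>t_0:\{\varphi=t\}\text{ carries no Levi current}\}$ is unbounded, and pick a sequence $t_n\in E$ with $t_n\to\infty$.

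The heart of the argument is to convert the non-existence of a Levi current on $\{\varphi=t_n\}$ into a quantitative local strict plurisubharmonicity: I expect to produce, for each $n$, a smooth function $\psi_n$ on a shell neighborhood $W_n\supset\{\varphi=t_n\}$ with $i\ddbar\psi_n\ge c_n\omega$ on $W_n$ and with $\psi_n$ matching $\varphi$ in a controlled way near $\partial W_n$. The mechanism is a Hahn--Banach separation on the convex, weakly compact set of positive $(1,1)$-bidimensional currents of mass one supported on $\{\varphi=t_n\}$ and obeying the internal Pfaff system obtained from \eqref{e:Pfaff} by replacing $r$ with $\varphi-t_n$. By hypothesis this set contains no $\ddbar$-closed element, so a separating functional of the form $T\mapsto\langle T,i\ddbar u\rangle$ exists and provides the desired perturbation. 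Since $\varphi$ is only continuous, I would first approximate it by a smooth (almost) psh function via a Richberg-type smoothing and invoke Sard's theorem to choose regular level values, then pass to the limit. This mirrors the reasoning behind Theorem \ref{T:main_1}, transposed from $\partial U$ to an interior level set.

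Given such $\psi_n$ for a sequence $t_n\to\infty$ in $E$, I would glue them to $\varphi$ using Demailly's regularized maximum to obtain a continuous psh exhaustion $\tilde\varphi$ of $U$ satisfying $i\ddbar\tilde\varphi\ge c\omega$ on $\{\varphi>t_0\}$. The resulting $\tilde\varphi$ is then strictly psh outside the compact set $\{\varphi\le t_0\}$, which contains all the $V_j$. By the Grauert--Remmert reduction (as invoked in the proof of Theorem \ref{T:main_1}), $U$ is therefore a modification of a Stein manifold, the desired contradiction. The main obstacle is the Hahn--Banach step: turning the qualitative absence of a Levi current into a usable quantitative lower bound $i\ddbar\psi_n\ge c_n\omega$ on a full shell neighborhood, while handling the mere continuity of $\varphi$ in the Pfaff system. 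Once this is in place, the regularized-max gluing and the Remmert reduction are essentially standard.
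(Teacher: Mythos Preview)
Your contrapositive setup and the Hahn--Banach step match the paper. Incidentally, you need not worry about the Pfaff system or the mere continuity of $\varphi$: as observed in Section~\ref{S:Levi}, any positive $\ddbar$-closed current of bidimension $(1,1)$ supported on a compact set is automatically a Levi current, so the absence of a Levi current on $\{\varphi=t_n\}$ is simply the absence of any positive $\ddbar$-closed current of mass one there, and Hahn--Banach (as in Proposition~\ref{P:2.1}) yields a smooth strictly psh function on a neighborhood of $\{\varphi=t_n\}$ without any smoothing of $\varphi$ or appeal to Sard.

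The genuine gap is your gluing step. The shells $W_n$ around the sparse sequence $\{\varphi=t_n\}$ do not cover $\{\varphi>t_0\}$; a regularized-max gluing of the $\psi_n$ with $\varphi$ produces a psh exhaustion $\tilde\varphi$ that coincides with $\varphi$ between consecutive shells, and there $i\ddbar\tilde\varphi=i\ddbar\varphi$ need not dominate $c\omega$. So the claim $i\ddbar\tilde\varphi\ge c\omega$ on all of $\{\varphi>t_0\}$ is unjustified. The paper closes this gap with an intermediate step you are missing: from strict plurisubharmonicity near $\{\varphi=t_n\}$ one concludes, by Grauert's theorem, that each sublevel set $\{\varphi<t_n\}$ is itself a modification of a Stein space. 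Consequently, for any compact $K\subset\{\varphi>t_1\}$ (with $t_1$ chosen beyond the finitely many compact varieties), $K$ lies in some $\{\varphi<t_n\}$ and avoids its exceptional set, so there is a strictly psh function near $K$ and hence no Levi current on $K$. Only with this in hand does the construction of Theorem~\ref{T:4.3} yield a psh exhaustion that is strictly psh on all of $\{\varphi>t_1\}$, giving the contradiction.
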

We also address briefly the general question of the existence of bounded strictly psh functions, through the notion of Liouville currents.  
  
  In Section \ref{S:manifolds_vector_fields}, we  show that  if $M$ is  infinitesimally  homogeneous and $U$ is not  Stein, then  $U$ is  foliated  by complex  manifolds of fixed dimension $d>0,$
  and   the  closure of each leave is compact in  $U.$
  
  In Section \ref{S:Levi-Pfaff}  we give a  foliated version of the  above results.  More  precisely,  we
  develop the notion  of pseudoconvexity  with  respect to a  Pfaff system.

\section{Bounded psh exhaustion functions}\label{S:bounded_psh}


The  proof of Theorem \ref{T:main_1} is  based on the  following proposition.
\begin{proposition}\label{P:2.1}
Let $U\Subset  M$ be  a  pseudoconvex domain  with smooth  boundary.  There is no  Levi current
on $\partial U$  iff  there is   a smooth  strictly psh  function  $u$ in a  neighborhood of  $\partial U.$
\end{proposition}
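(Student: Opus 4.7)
The easy direction is immediate. If $u$ is smooth and strictly psh in a neighborhood $V$ of $\partial U$, so $i\ddbar u\ge c\omega$ on $V$ for some $c>0$, and $T$ is any Levi current, then $\supp T\subset\partial U\subset V$ is compact and
\begin{equation*}
0 \;=\; \langle i\ddbar T,u\rangle \;=\; \langle T,i\ddbar u\rangle \;\ge\; c\,\langle T,\omega\rangle \;=\; c,
\end{equation*}
a contradiction. So no Levi current can exist.

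For the converse I would use a Hahn--Banach duality. Let $\Kc$ denote the set of positive currents $T$ of bidimension $(1,1)$, supported on $\partial U$, satisfying $T\wedge\partial r=0$, $T\wedge\ddbar r=0$ and $\langle T,\omega\rangle=1$. The mass bound makes $\Kc$ weak-$*$ compact and convex. If $\Kc=\emptyset$, no Levi-null complex tangent directions exist on $\partial U$ (otherwise a normalized Dirac current $iv\wedge\bar v\cdot\delta_p$ would lie in $\Kc$), hence $\partial U$ is strictly pseudoconvex and $u:=r+Ar^2$ is smooth strictly psh in a tube around $\partial U$ for $A$ large. Otherwise, the no-Levi-current hypothesis says $0\notin\Phi(\Kc)$, where $\Phi(T):=i\ddbar T$; since $\Phi$ is linear and continuous, $\Phi(\Kc)$ is a compact convex set of compactly supported $(n,n)$-distributions not containing $0$. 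Hahn--Banach in this locally convex space, whose topological dual is $C^\infty(M)$, then produces $\varphi\in C^\infty(M)$ with
\begin{equation*}
\langle T,i\ddbar\varphi\rangle \;=\; \langle i\ddbar T,\varphi\rangle \;>\; 0\qquad\text{for every } T\in\Kc.
\end{equation*}

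To upgrade this integral inequality to pointwise positivity, I would test against the normalized Dirac currents $T_{p,v}:=\lambda\, iv\wedge\bar v\cdot\delta_p$ for $p\in\partial U$ and $v$ a Levi-null complex tangent vector at $p$; the relations $\langle\partial r,v\rangle=0$ and $(i\ddbar r)(v,\bar v)=0$ directly give $T_{p,v}\in\Kc$, hence $(i\ddbar\varphi)_p(v,\bar v)>0$. Compactness of $\partial U$ and of the unit Levi-null tangent set upgrades this to a uniform bound $(i\ddbar\varphi)_p(v,\bar v)\ge c_0|v|_\omega^2$. Setting $u:=r+Ar^2+\varepsilon\varphi$ and using
\begin{equation*}
i\ddbar(r+Ar^2) \;=\; (1+2Ar)\,i\ddbar r+2A\,i\partial r\wedge\bar\partial r,
\end{equation*}
the first term is nonnegative on the complex tangent by pseudoconvexity, for $A$ large the second term dominates in the normal direction, and $\varepsilon\,i\ddbar\varphi$ picks up the Levi-null tangent directions. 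Choosing $A$ large and $\varepsilon$ small yields $i\ddbar u\ge c'\omega$ on $\partial U$, hence on a thin tube by continuity.

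The main obstacle is the Hahn--Banach step together with the extraction of pointwise positivity: one must verify weak-$*$ compactness of $\Phi(\Kc)$ in the appropriate topology, confirm that the separating functional is representable by a smooth function on $M$, and check that the extremal Dirac currents $iv\wedge\bar v\cdot\delta_p$ with Levi-null $v$ are admissible elements of $\Kc$. Once those points are in place, the final combination with $r+Ar^2$ reduces to routine chasing of constants on the compact set $\partial U$.
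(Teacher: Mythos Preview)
Your argument is correct, but it takes a more laborious route than the paper's. The key simplification you miss is that \emph{every} positive $\ddbar$-closed current of mass $1$ supported on $\partial U$ is automatically a Levi current: applying $\langle T,i\ddbar r^2\rangle=0$ gives $T\wedge\partial r=0$, and then $\langle T,i\ddbar(\chi r)\rangle=0$ together with pseudoconvexity forces $T\wedge i\ddbar r=0$. With this observation, the paper runs Hahn--Banach with the larger cone
\[
\Cc=\{T\ge 0:\ \supp T\subset\partial U,\ \langle T,\omega\rangle=1\}
\]
against the closed subspace of $\ddbar$-closed currents. The separating test function $u$ then satisfies $\langle T,i\ddbar u\rangle\ge\delta$ for \emph{all} $T\in\Cc$, so testing against $iv\wedge\bar v\,\delta_p$ for arbitrary unit $v$ (not only Levi-null ones) gives $i\ddbar u\ge\delta\omega$ on $\partial U$ directly---no combination with $r+Ar^2$ is needed.

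Your approach instead restricts $\Kc$ by the constraints $T\wedge\partial r=0$, $T\wedge\ddbar r=0$, so the separating function $\varphi$ is only known to be positive on Levi-null tangent directions; you then recover full strict plurisubharmonicity by forming $r+Ar^2+\varepsilon\varphi$. This works (the final compactness/constant-chasing is indeed routine: first choose $\varepsilon$ small so that $i\ddbar r+\varepsilon\, i\ddbar\varphi>0$ on the complex tangent bundle of $\partial U$, via a limiting argument along a hypothetical sequence $(p_n,v_n)$ with $v_n\to v\in M_p$; then choose $A$ large to dominate the normal and cross terms). What the paper's route buys is a one-line construction of $u$ and, more importantly, the lemma that the Levi constraints are redundant for $\ddbar$-closed positive currents on a pseudoconvex hypersurface---a fact that is reused later in the paper.
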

\proof
If $u$  is  a  strictly psh  function in a  neighborhood  of $\partial U$ and  $T$ is  a  positive  current 
supported on  $\partial U,$ then
$$
\langle T,i\ddbar u  \rangle=\langle i\ddbar T,u   \rangle.
$$
So if $i\ddbar T=0,$ we get  that  $T=0.$  Hence  there is  no  Levi current.

We next show that  any positive $\ddbar$-closed current $T$ of mass one supported on $\partial U$
is a  Levi current. Since $T$ is $\ddbar$-closed,  then  
  $\langle T, i\ddbar r^2\rangle=0.$ Expanding and using that it is supported on $\{r=0\},$we get,  
  $T\wedge i\partial r\wedge\overline{\partial} r=0.$
Therefore,  $T\wedge \partial r=0.$

Let $\chi$ be  a  smooth non-negative  function   with  compact  support. Using that
$T\wedge \partial r=0,$ we get that:
$$
0=\langle T,i\ddbar (\chi r)  \rangle=\langle T, \chi i\ddbar r   \rangle.
$$ 
But $\partial U$ is  pseudoconvex, i.e., $\langle i\ddbar r, it\wedge \bar{t}   \rangle\geq 0$ when
$\langle \partial r,t\rangle=0.$
The  current $T$ is   directed  by   the complex  tangent  current  space to $\partial U$
because  $T\wedge \partial r=0.$
It follows that $T\wedge\chi i\ddbar r=0$ for  an  arbitrary  $\chi.$ Hence, $T$ is a Levi current
on $\partial U.$ So  it is   enough  to show that if there is  no $\ddbar$-closed  positive  current of mass one supported on $\partial U,$ there is   a  smooth  strictly  psh  function  in  a neighborhood of
$\partial U.$

Let  
$$
\mathcal C:=\left\lbrace T:\    T\geq 0\quad \text{bidimension}\quad (1,1)\quad\text{supported on }\quad
\partial U,\quad \langle T,\omega\rangle=1  \right\rbrace,
$$
and
$$ Y:=\left\lbrace i\ddbar u,\  u\ \text{test smooth function on M}   \right\rbrace^\perp.$$ 

The space $Y$ is  the  space  of   the $i\ddbar$-closed currents on $M.$
We have  assumed that $\mathcal C\cap Y$ is  empty. The convex  compact $\mathcal C$ is  in   the  dual of  a reflexive  space. The  Hahn-Banach theorem implies that $\mathcal C$ and $Y$  are  strongly  separated.
Hence, there is  $\delta>0$ and a  test function $u,$ such that $\langle  i\ddbar u,T\rangle  \geq \delta,$
for every  $T$ in $\mathcal C.$ So the function  $u,$ is  strictly psh  at all points of $\partial U,$  and hence   in a
neighborhood of  $\partial U.$ Similar use of  Hahn-Banach theorem occurs in  \cite{S1, Su}
\endproof

Since   we have a  strongly psh function  in  a  neighborhood of $\partial U,$  Theorem  \ref{T:main_1}
 will be  a  consequence of the  following theorem.
 
 \begin{theorem}\label{T:2.2}
 Let $K$ be  a  compact   set in $U.$  Assume  there is  	  function  $v$  in $U\setminus K$ such that
 one of the  following   conditions  is  satisfied:
 \begin{enumerate}
 \item[(i)] $v$ is  bounded and  $i\ddbar v\geq \omega;$
 \item[(ii)]  $v\geq 0,$ $i\partial v\wedge \overline{\partial}v\leq i\ddbar v,$ and 
 $i\ddbar v\geq \omega.$
 \end{enumerate}
 Then there is a  psh exhaustion function $\hat\rho,$ vanishing  on $\partial U$ and  such that
 on $U\setminus K,$
 $i\ddbar  \hat\rho \geq  |\hat\rho|\omega.$ 
 \end{theorem}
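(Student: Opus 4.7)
The argument is a Diederich--Forn{\ae}ss type construction. In both cases the candidate $\hat\rho$ is an exponential or a fractional power combining the defining function $r$ (which enforces boundary vanishing) with the auxiliary $v$ (which contributes strict plurisubharmonicity). The free parameters (the exponent, the coupling constants) are fixed at the end of the computation, possibly after rescaling $v$ by a positive constant.

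For case (i), set
\[
\rho := r\,e^{-Cv}, \qquad \hat\rho := -(-\rho)^{\eta},
\]
with positive constants $C$ and $\eta\in(0,1)$ to be determined. Since $v$ is bounded, $\rho$ is still a smooth defining function of $U$; $\hat\rho$ is smooth, negative on $U$, and vanishes on $\partial U$. A direct computation yields
\[
i\ddbar\hat\rho \;=\; \eta(-\rho)^{\eta-2}\Bigl[(1-\eta)\, i\partial\rho\wedge\bar\partial\rho \;+\; (-\rho)\, i\ddbar\rho\Bigr].
\]
Expanding $i\ddbar\rho$ by Leibniz produces the main good term $-Cr\,e^{-Cv}\, i\ddbar v \geq C|r|e^{-Cv}\,\omega$, the cross terms $-C\,e^{-Cv}\bigl(i\partial r\wedge\bar\partial v + i\partial v\wedge\bar\partial r\bigr)$, and a small term $C^{2}r\, e^{-Cv}\, i\partial v\wedge\bar\partial v$ (negative since $r<0$). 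The cross terms are controlled by the Schwarz inequality for $(1,1)$-forms,
\[
\pm\, i\bigl(\partial r\wedge\bar\partial v + \partial v\wedge\bar\partial r\bigr) \;\leq\; \varepsilon^{-1}\, i\partial r\wedge\bar\partial r + \varepsilon\, i\partial v\wedge\bar\partial v,
\]
with $\varepsilon>0$ chosen pointwise (e.g.\ proportional to $|r|$). The $i\partial r\wedge\bar\partial r$ piece is absorbed by the dominant $(1-\eta)e^{-2Cv}\, i\partial r\wedge\bar\partial r$ part of $(1-\eta)\, i\partial\rho\wedge\bar\partial\rho$, while the $i\partial v\wedge\bar\partial v$ contributions are dominated by $C|r|e^{-Cv}\omega$ after a judicious choice of $C$ and $\varepsilon$. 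Pseudoconvexity of $\partial U$ ensures the correct tangential positivity of $i\ddbar r$ near $\partial U$. A final choice of $\eta$ sufficiently small then gives $i\ddbar\hat\rho \geq |\hat\rho|\,\omega$ on $U\setminus K$.

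For case (ii), the self-bounded-gradient hypothesis $i\partial v\wedge\bar\partial v \leq i\ddbar v$ allows a direct construction: set $\hat\rho := -e^{-\lambda v}$ with $\lambda\in(0,1)$. Then
\[
i\ddbar\hat\rho \;=\; \lambda\,e^{-\lambda v}\bigl[i\ddbar v - \lambda\, i\partial v\wedge\bar\partial v\bigr] \;\geq\; \lambda(1-\lambda)\,e^{-\lambda v}\, i\ddbar v \;\geq\; \lambda(1-\lambda)\, |\hat\rho|\,\omega.
\]
If $v$ is unbounded at $\partial U$ (the typical case of $v$ logarithmic in the boundary distance), $\hat\rho$ vanishes there automatically; if $v$ is bounded, then $-e^{-\lambda v}$ is itself a bounded smooth strictly psh function satisfying the hypothesis of case (i), to which the situation reduces. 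To obtain a psh exhaustion of all of $U$ (not merely of $U\setminus K$), extend $v$ smoothly across $K$, form $\hat\rho$ on $U$, and take a regularized maximum with a constant $-A$ chosen so that $0<A<\min_{K}|\hat\rho|$; since constants are psh and the transition region then lies inside $U\setminus K$ where $\hat\rho$ is psh, the resulting function is psh on $U$, vanishes on $\partial U$, and agrees with $\hat\rho$ near $\partial U$.

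The principal technical obstacle is the simultaneous balancing in case (i) of the three parameters $\eta,\, C,\, \varepsilon$: the two positive contributions $(1-\eta)\,i\partial\rho\wedge\bar\partial\rho$ and $|r|\, i\ddbar v$ must absorb every cross and negative term in both the tangential and the normal directions, uniformly as $|\rho|\to 0$ near $\partial U$, while the surviving lower bound must remain proportional to $|\hat\rho|\omega$ (which itself degenerates at $\partial U$).
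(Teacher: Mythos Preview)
Your case (i) construction $\hat\rho=-(-re^{-Cv})^{\eta}$ is exactly what the paper uses, and your outline of the Levi--form computation is correct in shape. But there is a genuine gap in the absorption step: you assert that ``the $i\partial v\wedge\bar\partial v$ contributions are dominated by $C|r|e^{-Cv}\omega$ after a judicious choice of $C$ and $\varepsilon$.'' This does not follow from the hypothesis of~(i). Boundedness of $v$ gives no pointwise control on $i\partial v\wedge\bar\partial v$; the function $v$ is defined only on $U\setminus K$ and need not extend smoothly to $\partial U$, so $|\partial v|$ may blow up there. When you expand $D(t)$ as the paper does, the offending term is $(\eta A+A^{-1})|\langle\partial v,t\rangle|^{2}$, and to absorb it into $\mathcal Lv$ you need exactly the self--bounded--gradient condition $i\partial v\wedge\bar\partial v\le i\ddbar v$ of~(ii). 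The paper's fix is to \emph{reduce} (i) to (ii): assuming $0\le v\le M$, replace $v$ by $cv^{2}$ with $c$ small, so that $i\partial(cv^{2})\wedge\bar\partial(cv^{2})=4c^{2}v^{2}\,i\partial v\wedge\bar\partial v\le 2c\,i\partial v\wedge\bar\partial v\le i\ddbar(cv^{2})$, and then run the single computation under hypothesis~(ii).

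Your case~(ii) construction $\hat\rho=-e^{-\lambda v}$ has a more basic problem: the theorem requires $\hat\rho$ to vanish on $\partial U$, and this happens only if $v\to+\infty$ at every boundary point. Nothing in hypothesis~(ii) forces that; $v$ may be unbounded on $U\setminus K$ yet stay bounded along sequences approaching parts of $\partial U$, and your dichotomy ``either $v$ is bounded (reduce to~(i)) or $v\to+\infty$ at $\partial U$'' is false. The paper avoids this entirely by using the \emph{same} $r$--based formula $\hat\rho=-(-re^{-Av})^{\eta}$ in case~(ii) as well; the factor $r$ guarantees boundary vanishing, and the hypothesis $i\partial v\wedge\bar\partial v\le i\ddbar v$ is precisely what makes the absorption of the $|\langle\partial v,t\rangle|^{2}$ term succeed with $A\simeq(2\sqrt\eta)^{-1}$ and $\eta$ small.
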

 \proof
 We know that  for $z\in\partial U,$ $\langle i\ddbar r(z),it\wedge \bar{t}   \rangle\geq 0,$ when
 $\langle \partial r(z),t\rangle=0.$ It  follows that there is a  constant $C,$  such that for $z\in\partial U$ and $t$ arbitrary  in the  tangent  space $T^{(1,0)}_z(M),$
 $$
 \langle i\ddbar r(z),it\wedge \bar{t}   \rangle\geq-C|\langle \partial r(z),t\rangle| |t|.
 $$
  
 Choose    a  small neighborhood $V$ of  $\partial U$ such that every  point $z$ in $V$  projects  to  a  point $z_1\in\partial U.$  Then $\partial r(z)=\partial r(z_1)+O(r)$ Hence,
 \begin{equation}\label{e:2.1}
 \begin{split}
 \langle i\ddbar r(z),it\wedge \bar{t}   \rangle&\geq  \langle i\ddbar r(z_1),it\wedge \bar{t}   \rangle
 -C_0|r(z)||t|^2\\
 &\geq-C_1 |\langle \partial r(z_1),t\rangle||t|-C_1|r(z)||t|^2\\
 &\geq-C|\langle \partial r(z),t\rangle||t|-C|r(z)||t|^2.
 \end{split}
 \end{equation}
 Define  $\rho:=re^{-Av}$ and  $\hat\rho:=-(-\rho)^\eta,$  we  will choose  $A$ and $\eta$ later.
 Observe  first that by Richberg's theorem \cite{R}, we can  assume that  $v$ is   smooth.  We have
 $$
 i\ddbar \hat\rho=\eta|r|^{\eta-2}e^{-A\eta v}[D(t)].
 $$
 To get that $i\ddbar \hat\rho\gtrsim|r|^2\omega,$ we  need to show that
 \begin{equation}\label{e:star}
 [D(t)]\gtrsim |r|^2\omega.
 \end{equation}
Let $\mathcal Lv$ denote $\langle  i\ddbar v(z),it\wedge\bar{t}\rangle.$  We then have 
\begin{eqnarray*}
D(t)&=& Ar^2(\mathcal Lv -\eta A|\langle \partial v,t\rangle|^2)+|r|(\mathcal Lr-2\eta\Re \langle \partial r,t\rangle
\overline{ \langle \partial v,t\rangle}\big)\\
&+& (1-\eta) |\langle \partial r,t\rangle|^2.
\end{eqnarray*}
We also have
$$
2\eta|r| |\Re\langle \partial r,t\rangle \overline{\langle\partial v,t \rangle|}\leq r^2
|\langle\partial v,t \rangle|^2+\eta^2 |\langle\partial r,t \rangle|^2.
$$
So using relation \eqref{e:2.1} we get
\begin{eqnarray*}
D(t) &\geq & Ar^2\big (\mathcal L v -(\eta A+A^{-1}) |\langle\partial v,t \rangle|^2  \big)
+(1-\eta-\eta^2)|\langle\partial r,t \rangle|^2\\
&-& C|r||\langle\partial r,t \rangle||t|-Cr^2|t|^2.
\end{eqnarray*}
Hence,
\begin{eqnarray*}
D(t) &\geq & Ar^2(\mathcal Lv -{C\over A}|t|^2- (\eta A+ {1\over A})  |\langle\partial v,t \rangle|^2
- {1\over A\sqrt{\eta}}|t|^2 \Big)\\
&+& (1-\eta-\eta^2 -C\sqrt{\eta}) |\langle\partial r,t \rangle|^2.  
\end{eqnarray*}
Since $i\ddbar v\geq \omega,$ and $i\partial v\wedge \bar{\partial} v\leq i\ddbar v,$ it suffices to take
$A\simeq  {1\over 2\sqrt{\eta}}$ and $\eta$ small  enough. If $v$ is  bounded, we can assume $v\geq 0$
and  replace $v$ by $Cv^2,$ then condition (ii) is  satisfied. 
\endproof
\begin{remark}\rm
(i) In particular, we  obtain that $U$ is  a  modification of a Stein space.\\
(ii) Observe  that when $v$ extends  smoothly to $\partial U,$ as in Theorem \ref{T:main_1}, then
$\hat\rho$ is H\"older  continuous.\\
(iii)  The  conditions on $v$  are of the type  required for the Donnelly-Fefferman weights, except 
we do not ask for completeness i.e. that $v\to\infty$ when we approach $\partial U.$
\end{remark}
\begin{example}
Let $(M,\omega)$  be  a compact K\"ahler  manifold. Let $T$ be  a positive closed current of bidegree $(1,1),$ cohomologous  to $\omega.$ Write $T-\omega=i\ddbar v,$ we can assume $v\leq 0.$  
Assume  $U$ is  pseudoconvex disjoint from the support of $T.$  Then on $U,$  we have
$\omega=i\ddbar(-v).$ The hypothesis of Theorem \ref{T:2.2} is  satisfied if $T$ admits locally  bounded
potentials.  Otherwise the  hypothesis of Theorem \ref{T:3.1} below is  satisfied.
\end{example}

\begin{theorem}\label{T:2.3}
Let $U\Subset M$ be  smooth  pseudoconvex  with real analytic  boundary. Then  $\partial U$ has no  Levi current iff it contains no germ of holomorphic  curve. 
\end{theorem}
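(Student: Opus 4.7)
By Proposition~\ref{P:2.1}, the absence of a Levi current on $\partial U$ is equivalent to the existence of a smooth strictly psh function in a neighborhood of $\partial U$. It therefore suffices to prove that on a real-analytic pseudoconvex hypersurface $\partial U$, absence of germs of holomorphic curves is equivalent to the existence of such a strictly psh function. The plan is to deduce both implications from the structure theorems for real-analytic pseudoconvex hypersurfaces of Diederich--Forn{\ae}ss \cite{DF2}, viewing Theorem~\ref{T:2.3} as a currents version of their theorem.

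For the direction ``no germ of holomorphic curve $\Rightarrow$ strictly psh function near $\partial U$'', I would use that a compact real-analytic pseudoconvex hypersurface $\partial U$ without germs of positive-dimensional complex subvarieties is of D'Angelo finite type at every point. At finite-type points one has standard local constructions of smooth strictly psh bumping functions; since $\partial U$ is compact, a partition-of-unity argument then produces a single smooth strictly psh function in a neighborhood of $\partial U$. Alternatively, one can take the bounded strictly psh exhaustion function on $U$ produced directly in \cite{DF2} and use its restriction to a collar of $\partial U$.

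For the contrapositive of the other direction, assume $\partial U$ contains a germ $\phi \colon \Delta \to \partial U$ of holomorphic curve at a point $p$, and construct a Levi current $T$. Here real-analyticity is decisive: by \cite{DF2}, such a germ propagates along a real-analytic Levi-flat subvariety $\Sigma \subset \partial U$ foliated by complex curves, and $\overline{\Sigma} \subset \partial U$ is compact. Choose a minimal closed invariant subset $K \subset \overline{\Sigma}$ of this foliation, and either average the currents of integration on leaves against an invariant transverse measure on $K$, or apply an Ahlfors-type limiting construction along a single leaf of infinite hyperbolic area. This yields a positive current $T$ of bidimension $(1,1)$ with $\supp T \subset \partial U$, $\langle T, \omega \rangle = 1$, and $i\ddbar T = 0$. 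Since $T$ is directed by the complex tangent directions to the foliation leaves, which lie in the kernel of the Levi form along $\Sigma$, the relations $T \wedge \partial r = 0$ and $T \wedge \ddbar r = 0$ follow automatically, so $T$ is a Levi current. Proposition~\ref{P:2.1} then rules out a strictly psh function near $\partial U$.

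The main obstacle is the construction of the $\ddbar$-closed current in the second direction: the bare current of integration $[\phi(\Delta)]$ has non-trivial boundary, so $i\ddbar [\phi(\Delta)] \neq 0$. The full weight of real-analyticity is needed here, via \cite{DF2}, to propagate the germ along a semi-analytic Levi-flat set whose foliation admits an invariant transverse measure on a compact minimal set. Verifying every component of the Pfaff system \eqref{e:Pfaff} after this averaging procedure, and in particular $\ddbar$-closedness, is the technical heart of the argument.
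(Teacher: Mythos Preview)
Both directions of your proposal differ from the paper's proof, and one of them has a genuine gap.

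For ``no germ $\Rightarrow$ no Levi current'': your partition-of-unity step does not work as written, since a sum $\sum \chi_i u_i$ with strictly psh $u_i$ and cutoffs $\chi_i$ is in general not even psh; and the alternative of quoting the bounded exhaustion of \cite{DF2} is unavailable here, because that construction is carried out in $\C^n$ and relies on the plurisubharmonicity of $-\log\dist(\cdot,\partial U)$, which (as the introduction emphasizes) fails in a general $M$. The paper bypasses the construction of a strictly psh function altogether and argues directly on the current. A Levi current $T$ is a priori supported on the set $N_0=\{z\in\partial U:\dim M_z>0\}$ where the Levi form degenerates, and is directed by $M_z$. By \cite{DF2}, the absence of germs forces every real-analytic stratum $N_j$ of $N_0$ to have holomorphic dimension zero, i.e.\ $T^{(1,0)}_z(N_j)\cap M_z=\{0\}$. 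Using that $T$ is $\ddbar$-closed and supported on $N_j$, one expands $\langle T,i\ddbar(\chi\rho_j^2)\rangle=0$ and $\langle T,i\ddbar(\chi\rho_j)\rangle=0$ for defining functions $\rho_j$ of $N_j$ to obtain $T\wedge\partial\rho_j=0$, so $T$ is also directed by $T^{(1,0)}_z(N_j)$. The two directedness conditions together force $T=0$.

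For ``germ $\Rightarrow$ Levi current'': your route through propagation to a Levi-flat set followed by foliation-cycle or Ahlfors-type machinery can in principle be made rigorous, but it is far heavier than needed, and the existence of an invariant transverse measure on a minimal set is itself a nontrivial assertion. The paper runs the contrapositive with an elementary maximum-principle argument instead: if there is no Levi current, Proposition~\ref{P:2.1} supplies a smooth strictly psh $v$ near $\partial U$; letting $W$ be the union of germs of holomorphic discs in $\partial U$ and $p\in\overline W$ a maximum point of $v|_{\overline W}$, the uniform propagation result of \cite{DF2} produces a nontrivial subvariety of fixed radius $\delta$ through points $q$ arbitrarily close to $p$, on which $v$ would then attain an interior maximum --- contradicting strict plurisubharmonicity. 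No current ever needs to be built.
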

\proof

Suppose  $\partial U$ has  no Levi current.  Then there is a  smooth  strictly psh function $v$ near $\partial U.$  Let $W$
denote the  union of non-trivial  germs of holomorphic discs on $\partial U$ and  suppose $W$ is  nonempty.
Consider the  closure $\overline{W}$  and let $p\in\overline{W}$ where the  function $v$ reaches  it maximum on $\overline{W}.$
According to the proof  of  Theorem  4  of \cite{DF2}  there is  a  point $q$ close to $p$ and  a  nontrivial  subvariety $V$ through $q,$
in a  polydisc  centered at $q$ of radius $\delta.$ Moreover, we can choose $q$  arbitrarily close to $p,$ without  changing $\delta.$ Since $v$  is   strictly psh,
we can  assume  that the maximum at $p$  is  reached at  an interior point  of $V.$ A  contradiction. So  $W$ is  empty.

Assume now that $\partial U$ does  not have  a non-trivial germ of holomorphic disc. It follows from Theorem 3 in  \cite{DF2} that  the  holomorphic   dimension of any  real analytic
 submanifold $N$ is  zero. This  means that  for every $z\in N,$  $T^{(1,0)}_z(N)$ intersects 
 $$
 M_z:=\left\lbrace  t:\   t\in T^{(1,0)}_z(\partial U),\   \langle i\ddbar r(z),it\wedge\bar{t}\rangle=0   \right\rbrace
 $$
only at $0.$  So for each  non-zero vector $t\in T^{(1,0)}_z(N)$, $\langle i\ddbar r(z),it\wedge\bar{t}\rangle>0.$
The authors in \cite{DF2} state  and prove their  theorem in $\C^n$ but this part  of the  argument  is  of local nature.
Let $N_0$  denote the real analytic set where $\dim M_z>0.$ Then
$N_0\subset \bigcup_{k=1}^r N_k,$  where $N_k$   is a closed submanifold in $\partial U\setminus \bigcup_{j=1}^{k-1} N_j,$ 
moreover
$\langle i\ddbar r(z),it\wedge\bar{t}\rangle>0$ for a non-zero   $t\in T^{(1,0)}_z(N_k).$ This follows  from the Lojasiewicz  stratification of real analytic sets  and  from the 
above  statment, see \cite{DF2}. The Levi current is  a  priori supported on $N_0.$

Let $\rho_j$  be  a  defining function of $N_j$  and  let $\chi$  be a  cutoff function. If we expand $\langle T,i\ddbar (\chi\rho_j^2)\rangle=0,$
 we get that $T\wedge \partial \rho_j=0.$

Writting   that  $\langle T,i\ddbar (\chi\rho_j)\rangle=0,$ we get also that $T\wedge i\ddbar \rho_j=0.$ The non-degeneracy of $i\ddbar \rho_j$ on $M_z$ implies that
$T=0.$
\endproof

 We recall  the following form  of the  following  Donnelly-Fefferman theorem, see \cite{DoF} and \cite{B}.
 
 \begin{theorem}\label{T:2.4}
  Let $N$  be a  complex  manifold of dimension $n.$  Let $\Omega:=i\ddbar\varphi$  be  a complete K\"ahler metric on $N.$
  Assume there is $C_0>0$  such that
  $i\partial \varphi\wedge \overline\partial{\varphi}\leq C_0 i\ddbar\varphi.$
  
  Assume  $p+q\not=n.$
Then, for any $(p,q)$-form $f$  in $L^2$ with  $\overline\partial f=0,$ there  is  a  solution $u,$ to  the  equation  
   $\overline\partial u=f$ with
   $$
  \|  u\|^2_\Omega\leq C\|f\|^2_\Omega.
   $$
  
  \end{theorem}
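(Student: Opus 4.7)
The plan is to deduce the estimate from H\"ormander's weighted $L^2$ theory for $\overline\partial$ on the complete K\"ahler manifold $(N,\Omega)$, choosing a bounded weight manufactured from $\varphi$ so that weighted and unweighted $L^2$-norms are comparable. Completeness of $\Omega$ makes smooth compactly supported forms dense in the graph norms of $\overline\partial$ and $\overline\partial^*$, so the Bochner-Kodaira-Nakano-H\"ormander identity applies: for any smooth weight $\psi$ on $N$ with $i\ddbar\psi$ strictly positive on $(p,q)$-forms, any $\overline\partial$-closed $(p,q)$-form $f$ of finite weighted $L^2$-norm has a solution $u$ of $\overline\partial u=f$ with
$$\int_N |u|^2_\Omega\, e^{-\psi}\, dV_\Omega \;\le\; \int_N \langle B_\psi^{-1} f,f\rangle_\Omega\, e^{-\psi}\, dV_\Omega,$$
where $B_\psi$ is the positive curvature operator induced by $i\ddbar\psi$. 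The restriction $p+q\neq n$ enters here, via the K\"ahler identities, to ensure invertibility of $B_\psi$ on $(p,q)$-forms and the absence of an obstruction in the minimal-norm solution.

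The weight I would use is $\psi := \chi(\varphi)$ for a bounded smooth concave-increasing $\chi$. A one-line computation gives
$$i\ddbar\chi(\varphi) \;=\; \chi'(\varphi)\, i\ddbar\varphi \,+\, \chi''(\varphi)\, i\partial\varphi\wedge\overline\partial\varphi.$$
Since $\chi''\le 0$, the self-bound hypothesis $i\partial\varphi\wedge\overline\partial\varphi \le C_0\, i\ddbar\varphi$ yields
$$i\ddbar\psi \;\ge\; \bigl(\chi'(\varphi) + C_0\chi''(\varphi)\bigr)\,\Omega.$$
The concrete choice $\chi(t) := -e^{-t/(2C_0)}$ makes $\chi'(t) + C_0\chi''(t) = \frac{1}{4C_0}e^{-t/(2C_0)}>0$, and $\chi$ is bounded on the range of $\varphi$ (in the applications of interest $\varphi$ is bounded below and tends to $+\infty$ near the ideal boundary), so $e^{\pm\psi}$ is squeezed between two positive constants and the weighted $L^2$-norm is comparable to the unweighted one.

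The main obstacle is the degeneration of $B_\psi$: its smallest eigenvalue decays like $e^{-\varphi/(2C_0)}$, so a naive application of the basic estimate above would reinsert an unbounded factor $e^{\varphi/(2C_0)}$ on the right-hand side, ruining the unweighted bound. This is where the self-bound hypothesis really earns its keep, via a Berndtsson-type twisted Bochner-Kodaira identity: one inserts an auxiliary factor $\tau := \chi'(\varphi)$ into the squared $\overline\partial$- and $\overline\partial^*$-norms; the decay of $\tau$ compensates the decay of $B_\psi$, and the parasitic cross-term $-i\ddbar\tau(u,u)$ produced by the twist is absorbed into $\tau\, i\ddbar\psi$ using once more the bound $i\partial\tau\wedge\overline\partial\tau \lesssim \tau\,\Omega$ that $i\partial\varphi\wedge\overline\partial\varphi \le C_0\,i\ddbar\varphi$ provides. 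The outcome is a clean inequality $\int|u|^2_\Omega e^{-\psi}\le C\int|f|^2_\Omega e^{-\psi}$, which, combined with the boundedness of $\psi$, gives the claimed unweighted estimate. Completeness of $\Omega$ is used throughout to justify the integration by parts that underlies the twisted identity.
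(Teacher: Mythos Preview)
The paper does not give a proof of Theorem~\ref{T:2.4}: it is explicitly \emph{recalled} from Donnelly--Fefferman and Berndtsson (references~[DoF] and~[B]), and is then invoked as a black box in the subsequent Proposition. So there is no in-paper argument to compare against.

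That said, your outline is essentially the Berndtsson route from~[B]: manufacture from $\varphi$ a weight whose curvature dominates a multiple of $\Omega$, then use a twisted Bochner--Kodaira inequality (with twist factor $\tau=\chi'(\varphi)$) so that the decay of the curvature eigenvalue is exactly cancelled, the parasitic term being absorbed thanks to $i\partial\varphi\wedge\overline\partial\varphi\le C_0\,i\ddbar\varphi$. This is the correct mechanism, and completeness is indeed what justifies the integrations by parts. Two points are worth tightening. First, your boundedness of $\psi=-e^{-\varphi/(2C_0)}$ needs $\varphi$ bounded below; the theorem as stated does not assume this, although the paper's intended application (with $\varphi=-\log(-\hat\rho)$) certainly satisfies it, and the paper's remark that ``completeness means $\varphi(z)\to\infty$'' tacitly treats $\varphi$ as an exhaustion. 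In the general statement one usually bypasses this by working directly with the unbounded weight $\varphi$ and extracting the unweighted estimate from the twisted identity, rather than by first making the weight bounded. Second, your invocation of $p+q\ne n$ is slightly imprecise: the curvature operator $[i\ddbar\psi,\Lambda]$ on $(p,q)$-forms, when $i\ddbar\psi$ is a scalar multiple of $\Omega$, is multiplication by that scalar times $(p+q-n)$; so one gets positivity directly only for $p+q>n$, and the case $p+q<n$ is obtained by conjugation/Hodge duality. Both are routine fixes within the Berndtsson framework you have chosen.
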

The condition on $\varphi$  means just that  $|d\varphi|_\Omega$ is  bounded. The  completeness means that  $\varphi(z)\to\infty$ when  $z\to\infty$ on $N.$
The following proposition permits to apply the above  theorem  to the  pseudoconvex domains  considered previously. We just have to assume that $U$ does not contain analytic varieties of
positive dimension.

\begin{proposition}
 Let $U\Subset M$ be  a  pseudoconvex domain  with a negative exhaustion  function $\hat\rho,$  satisfying
 $$
 i\ddbar\hat\rho \gtrsim |\hat\rho|\omega.
 $$
 Let $\varphi:=-\log(-\hat\rho).$  Then  the metric   $\Omega:= i\ddbar\varphi$ is complete  and  $|d\varphi|_\Omega$ is  bounded. So Theorem \ref{T:2.4} applies.
 \end{proposition}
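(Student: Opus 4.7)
The plan is to compute $\Omega = i\ddbar\varphi$ explicitly, read off the Donnelly--Fefferman bound $|d\varphi|_\Omega \le C$ from its structure, and then deduce completeness by comparing curve lengths in $\Omega$ to the variation of $\varphi$.

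First I would use the standard identity for a negative psh function: since $\varphi = -\log(-\hat\rho)$ and $\hat\rho < 0$ on $U$, one computes
\begin{equation*}
i\ddbar \varphi \;=\; \frac{i\ddbar \hat\rho}{|\hat\rho|} \;+\; \frac{i\,\partial\hat\rho \wedge \bar\partial \hat\rho}{\hat\rho^{2}}.
\end{equation*}
Noting that $\partial\varphi = -\partial\hat\rho/\hat\rho$, the second term is exactly $i\,\partial\varphi \wedge \bar\partial\varphi$. By hypothesis $i\ddbar \hat\rho \gtrsim |\hat\rho|\,\omega$, so the first term dominates a positive multiple of $\omega$, giving
\begin{equation*}
\Omega \;=\; i\ddbar \varphi \;\gtrsim\; \omega + i\,\partial\varphi \wedge \bar\partial\varphi.
\end{equation*}
In particular $\Omega$ is Kähler on $U$, and $i\,\partial\varphi \wedge \bar\partial\varphi \le \Omega$; this last inequality is precisely the statement that the dual norm $|\partial\varphi|_\Omega$ is bounded by $1$, hence $|d\varphi|_\Omega$ is bounded. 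So the Donnelly--Fefferman hypothesis of Theorem~\ref{T:2.4} holds with $C_0 = 1$.

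It remains to check completeness. The function $\varphi$ is proper on $U$ because $\hat\rho$ is a negative exhaustion with $\hat\rho \to 0$ at $\partial U$, so $\varphi \to +\infty$ on every sequence approaching $\partial U$. Now take any absolutely continuous curve $\gamma:[0,1)\to U$ which leaves every compact subset of $U$. By Cauchy--Schwarz applied to the bounded form $d\varphi$ with respect to $\Omega$,
\begin{equation*}
\bigl|\varphi(\gamma(t)) - \varphi(\gamma(0))\bigr| \;\le\; \int_{0}^{t} |d\varphi(\dot\gamma)|\,ds \;\le\; \|d\varphi\|_\Omega \int_{0}^{t} |\dot\gamma|_\Omega\,ds \;=\; C\,L_\Omega(\gamma|_{[0,t]}).
\end{equation*}
Since $\varphi(\gamma(t))\to\infty$, the right-hand side blows up, so $\gamma$ has infinite $\Omega$-length. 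Thus no curve of finite length can leave $U$, and $(U,\Omega)$ is a complete Kähler manifold. The hypotheses of Theorem~\ref{T:2.4} are satisfied.

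The only mild subtlety is the completeness step, where one must make sure that properness of $\varphi$ in $U$ (not necessarily in $M$) is enough: the argument only uses that $\varphi\to\infty$ along any sequence escaping to $\partial U$, which is automatic from $\hat\rho\to 0$ there. The computation of $i\ddbar\varphi$ and the extraction of the $|d\varphi|_\Omega$ bound are routine and do not require any additional geometric input beyond the curvature inequality provided by Theorem~\ref{T:2.2}.
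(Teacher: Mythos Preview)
Your proof is correct and follows essentially the same approach as the paper: the key identity $i\ddbar\varphi = \dfrac{i\ddbar\hat\rho}{|\hat\rho|} + i\partial\varphi\wedge\bar\partial\varphi \ge c\,\omega + i\partial\varphi\wedge\bar\partial\varphi$ is exactly what the paper writes, from which both the Donnelly--Fefferman bound and completeness are read off. You simply spell out the completeness step (via the inequality $|\varphi(\gamma(t))-\varphi(\gamma(0))| \le C\,L_\Omega(\gamma)$) in more detail than the paper, which merely asserts it from $\hat\rho\to 0$ at $\partial U$.
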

\proof
We have
$$
i\ddbar\varphi={i\ddbar\hat\rho \over|\hat\rho|}+  {i\partial\hat\rho \wedge \overline\partial\hat\rho\over \hat\rho^2}
={i\ddbar\hat\rho \over |\hat\rho|}+i\partial\varphi \wedge \overline\partial\varphi\geq   c\omega+i\partial\varphi \wedge \overline\partial\varphi.
$$
Moreover,   since $\hat\rho\to 0$ when  $z\to\partial U,$ the  metric $\Omega$  is complete.
\endproof

\begin{remark}
 In Theorem  \ref{T:2.2}, we  start  with a potential $v$ satisfying  $i\partial v\wedge\overline\partial v\leq  i\ddbar v,$ but the metric is  not necessarily complete. We end up
 with a  complete one associated to $\varphi:=-\log(-\hat\rho).$
\end{remark}

\endproof

\section{A condition for Steiness of a pseudoconvex domain}\label{S:Steiness}

As  recalled in the  introduction, according to Grauert's theorem, to prove that  a  pseudoconvex domain is  Stein, one  should   construct  a  strictly psh  exhaustion function.
Here  we give  a  quite  weak assumption in order to construct   such an exhaustion. 

\begin{theorem}\label{T:3.1}
 Let $U\Subset M$ be  a  pseudoconvex domain with smooth  boundary. Assume there is a  neighborhood $V$ of $\partial U,$
 and  a  function $v$ on $U\cap V$ such that the following conditions are satisfied:
 \begin{enumerate}
  \item[ (i)] $i\ddbar v\geq \omega;$
  \item[(ii)] If $r$ denotes a  defining   function  for $\partial U,$ then for any $\epsilon>0,$
  $v>\epsilon\log{|r|}$ when $r\to 0.$
 \end{enumerate}
Then $U$ admits a  bounded exhaustion function, with  all level sets  strictly pseudoconvex. Moreover,  $U$ is  a modification 
of a Stein space.
\end{theorem}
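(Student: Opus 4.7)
The plan is to push the Diederich--Forn\ae ss construction of Theorem~\ref{T:2.2} through to the present setting, where $v$ is no longer assumed bounded nor to have self-bounded gradient; condition (ii) plays the sole role of keeping the resulting exhaustion bounded. In a subcollar $V\cap U$ of $\partial U$ I would take, exactly as in Theorem~\ref{T:2.2},
$$\rho := re^{-Av},\qquad \hat\rho := -(-\rho)^\eta = -(-r)^\eta e^{-A\eta v},$$
for $\eta\in (0,1/2)$ small and $A>0$ large to be chosen. Applying (ii) with $\epsilon=1/(2A)$ yields $v > (2A)^{-1}\log|r|$ near $\partial U$, hence $|\hat\rho|\leq |r|^{\eta/2}\to 0$ as $r\to 0$. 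Thus $\hat\rho$ is bounded, vanishes in the limit on $\partial U$, and its sublevel sets $\{\hat\rho<-\delta\}$ are relatively compact in $U$ for every $\delta>0$.

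The decisive estimate is again the identity $i\ddbar\hat\rho = \eta|r|^{\eta-2}e^{-A\eta v}D(t)$, with $D(t)$ the quadratic form appearing in the proof of Theorem~\ref{T:2.2}. Condition (i) gives $\mathcal L v\geq |t|^2$, but without the self-bounded-gradient hypothesis of Theorem~\ref{T:2.2}(ii), the term $\eta A|\langle\partial v,t\rangle|^2$ inside $D(t)$ cannot be absorbed into $\mathcal L v$ directly. The key observation is that we only need $D(t)>0$ on the complex tangent space to each level set $\{\hat\rho=c\}$. Since
$$\partial\hat\rho = \eta(-r)^{\eta-1}e^{-A\eta v}\bigl(\partial r - Ar\,\partial v\bigr),$$
the tangency $\langle\partial\hat\rho,t\rangle=0$ translates into $\langle\partial r,t\rangle = Ar\langle\partial v,t\rangle$. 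Substituting this identity in $D(t)$, the pieces $(1-\eta)|\langle\partial r,t\rangle|^2$ and $-2\eta|r|\Re\langle\partial r,t\rangle\overline{\langle\partial v,t\rangle}$ regroup into $A\bigl(A(1-2\eta)+2\eta\bigr)r^2|\langle\partial v,t\rangle|^2$, with \emph{positive} coefficient for $\eta<1/2$; the contribution of $\mathcal L r$ is bounded via \eqref{e:2.1} and Cauchy--Schwarz, and the remaining error absorbed into the dominant term $Ar^2\mathcal L v\geq Ar^2|t|^2$ by taking $A$ large, after rescaling $v\mapsto Nv$ if necessary (an operation that preserves both (i) and (ii)). One concludes $D(t)\geq c\,r^2|t|^2$ on the complex tangent to each level set, so every level set of $\hat\rho$ near $\partial U$ is strictly pseudoconvex.

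To produce a global exhaustion on $U$, I would patch $\hat\rho$ with a bounded psh function on a compact interior piece $K\Subset U$ via a regularized maximum, so that the strict pseudoconvexity of level sets persists outside $K$. The sublevel sets $\{\hat\rho<-\delta\}$ for $\delta$ small are then relatively compact domains with strictly pseudoconvex boundary, hence Stein, and an exhaustion argument identifies $U$ as a modification of a Stein space. The main obstacle throughout is the possible unboundedness of $|\partial v|$; it is bypassed precisely by restricting the plurisubharmonicity estimate to the complex tangent of level sets of $\hat\rho$, where the tangency relation $\langle\partial r,t\rangle = Ar\langle\partial v,t\rangle$ is the substitute for the self-bounded-gradient estimate of Theorem~\ref{T:2.2}.
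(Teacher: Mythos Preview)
Your approach is essentially the paper's own: the paper works with $\sigma:=re^{-Av}$ (your $\rho$), checks via condition~(ii) that $\sigma$ is a bounded exhaustion, and then verifies strict pseudoconvexity of the level sets by exactly the substitution $\langle\partial r,t\rangle = Ar\langle\partial v,t\rangle$ you describe. Two small remarks. First, the extra power $\eta$ you carry over from Theorem~\ref{T:2.2} is unnecessary here, since level sets of $\hat\rho$ coincide with those of $\sigma$ and only the level-set Levi form is being tested; the paper works with $\sigma$ directly. Second, your final step (``sublevel sets strictly pseudoconvex, hence Stein, then exhaust'') is not quite right as stated---a strictly pseudoconvex boundary only gives a modification of a Stein space, and an increasing union of such is not obviously one itself. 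The paper instead observes that strict pseudoconvexity of all level sets yields an estimate $i\ddbar\sigma\geq -\tfrac{C(\sigma)}{2}\,i\partial\sigma\wedge\overline\partial\sigma$, and then composes with a suitable convex increasing $\chi$ (solving $\chi''=C\chi'$) so that $\chi(\sigma)$ is genuinely strictly psh near $\partial U$; Grauert's theorem then applies directly to $U$.
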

When   $v$ is  defined near $\partial U$ and  satisfies $i\ddbar v\geq \omega,$ Elencwajg \cite{E}  showed that  $U$ is  a modification 
of a Stein space (see also \cite{Siu}).
\proof
Define  $\sigma:=re^{-Av}.$ According to  Richberg's approximation theorem \cite{R} we can   assume that $v$ is  smooth. Condition  (ii) implies that for every $A>0,$
$\sigma$ is an exhaustion function. We have
$\overline\partial \sigma=e^{-Av}(\overline\partial r-Ar\overline\partial v)$ and 
$$
i\ddbar \sigma=e^{-Av}\Big( i\ddbar r- 2A\Re(i\partial r\wedge \overline\partial v)-Ari\ddbar v+A^2r\partial v\wedge \overline\partial v \Big).
$$
We are going to check that the level sets  of $\sigma$  are strictly pseudoconvex. If $t$ is   a $(1,0)$ tangent vector  to a level set, then 
$\langle \partial \sigma,t\rangle=0$ i.e.  $\langle \partial r,t\rangle=Ar\langle \partial v,t\rangle.$
So 
\begin{eqnarray*}
\langle  i\ddbar \sigma, it\wedge \overline{t} \rangle&=&e^{-Av}\big ( \langle i\ddbar r, it\wedge \overline{t} \rangle+2A^2|r||\langle  \partial v,t\rangle|^2\\
&+&A^2r |\langle  \partial v,t\rangle|^2+A|r|\langle i\ddbar v,  it\wedge \overline{t} \rangle\big). 
\end{eqnarray*}
We also have  near  $\partial U$ that

$$
\langle  i\ddbar r(z), it\wedge \overline{t} \rangle
\geq  -C\big ( |\langle  \partial r(z),t\rangle||t|+|r||t|^2  \big).
$$
So if $A>C,$ and $A$ is large enough.
\begin{eqnarray*}
e^{Av} \langle i\ddbar \sigma, it\wedge \overline{t} \rangle &=& -CA |r|\ |\langle  \partial v,t\rangle|^2 -2C|r| |t|^2\\
&+& A^2|r||\langle  \partial v,t\rangle|^2 +A|r| \langle i\ddbar v, it\wedge \overline{t} \rangle\\
&\geq & |r| (  A\langle  i\ddbar v,it\wedge \bar{t}\rangle-2C|t|^2 )>0. 
\end{eqnarray*}

It follows that there is  a  function  $C(\sigma)$ such  that
$$
i\ddbar \sigma \geq   -{C(\sigma)\over 2}i\partial\sigma\wedge \overline\partial\sigma\qquad\text{near}\qquad \partial U.
$$
Let  $\kappa(t):=\int_{-1}^t C(s)ds,$ and $\chi(t):=\int_{-1}^t e^{\kappa(s)}ds.$
Then $\chi''-C(s)\chi'(s)=0.$  Define $\rho:=\chi(\sigma).$ Then $\rho$ is a  strictly psh  exhaustion  function. Indeed,
\begin{multline*}
i\ddbar \rho=\chi'(\sigma)i\ddbar\sigma+\chi''(\sigma)i\partial\sigma\wedge\overline\partial\sigma\\
> \big( {-C\over 2} \chi'(\sigma) +\chi''(\sigma) \big)i\partial\sigma\wedge\overline\partial\sigma\geq {C\over 2}\chi'(\sigma)i\partial\sigma\wedge\overline\partial\sigma.
\end{multline*}
On the other hand, when $\langle\partial\sigma,t \rangle=0,$  we also  have  strict positivity.  So $\rho$ is a  strictly  psh exhaustion  function.
\endproof


\section{An obstruction to Steiness: Levi currents}\label{S:Levi}


Let  $U\Subset M$
be  a locally  Stein domain in a complex Hermitian   manifold  $(M,\omega).$ It is  not clear  whether   there  are  non-constant  psh functions in $U.$ 

When $U$ admits a continuous psh exhaustion function, the domain $U$ may  not have   strictly  psh   functions and hence is not  necessarily Stein, this is the case in Grauert examples \cite{G1,G2}, or   in the   families  described
by Ohsawa  \cite{O}.
 When $M$ is  infinitesimally homogeneous manifold \cite{H1,H2},  the domain $U$ has a psh exhaustion function, but may  not have   strictly  psh   functions and hence is not  necessarily Stein.
  
In this  section we  want to discuss an obstruction  to  Steiness  given by   Levi currents  with  compact support in $U.$ In order  to introduce the notion
we need to define  $T\wedge i\ddbar v,$ when  $T$ is a  positive current $\ddbar$-closed  and $v$ is a  continuous  psh function.
We recall   few  results  from  \cite{DS1}.

Let $T$ be  a positive  current of bidegree $(p,p).$   Assume that $i\ddbar T$ is  of order $0.$
When $T$ is a  current of order $0,$ the mass of $T$ on a compact $K$  is  denoted by $\| T\|_K.$
When $T$ is  positive, and $M$ is  of dimension $n,$  $\| T\|_K$ is  equivalent to  $\big|\int_K T\wedge \omega^{n-p}\big|.$

 When $u$ is  a  smooth psh function on an open  set $V\subset M$
we have
\begin{equation}\label{e:4.1}
i\ddbar u\wedge T:=u(i\ddbar T) -i\ddbar(uT)+i\partial(\dbar u\wedge T)-i\dbar(\partial u\wedge T).
\end{equation}
The following estimate is proved in  \cite{DS1}. Let $L\Subset K$  be two  compact sets in $V.$  Assume  $T$ is  positive 
and $i\ddbar T$ is  of order $0.$ Then there is  a  constant $C_{K,L}>0$  such that   for every  smooth
bounded psh function $u$ on $V,$ we have
\begin{equation}\label{e:4.2}
\int_L i\partial u\wedge \dbar u\wedge T\wedge \omega^{n-p-1}\leq  C_{K,L}\|u\|^2_{L^\infty(K)}\big (\|T\|_K+\|i\ddbar T\|_K\big)
\end{equation}
and
\begin{equation}\label{e:4.3}
\| i \ddbar u\wedge T\|_L\leq  C_{K,L}\|u\|_{L^\infty(K)}\big (\|T\|_K+\|i\ddbar T\|_K\big).
\end{equation}
This permits to extend relation  \eqref{e:4.1} to $u$ psh  and continuous. Moreover, when $u_n$ converges locally uniformly
to $u$ then 
$$
I_{n,m}:=\int_L i\partial (u_n-u_m)\wedge \dbar (u_n-u_m)\wedge T\wedge \omega^{k-p-1}
$$
converges to $0.$ It is  enough to prove that  for a ball  $B$  and  to assume $u_n$ and $u_m$  coincide  near  the boundary  of $B$, see \cite{DS1}. So
$$
I_{n,m}={-1\over 2}\int  (u_n-u_m) i\ddbar (u_n-u_m)\wedge T+{1\over 2}\int  i\ddbar (u_n-u_m)^2\wedge T.
$$
Hence,  
\begin{eqnarray*}
2I_{n,m}&\leq&  \|u_n-u_m\|^2_B\| i\ddbar T\|_B +\int_B|u_n-u_m|i\ddbar u_n\wedge T +\int_B|u_n-u_m|i\ddbar u_m \wedge T\\
&+&\int_B|u_n-u_m|i\ddbar u_n \wedge T.
\end{eqnarray*}
The convergence  follows   using \eqref{e:4.3}.

Estimate \eqref{e:4.2}  permits also to define $\partial u\wedge T.$ Then  $\partial u_n\wedge T\to  \partial u\wedge T$, as currents of order $0,$  if $u_n$ converges to $u$ uniformly on compact sets.

\begin{definition}
 A Levi current    in $V$  is    a nonzero positive current of bidimension $(1,1),$ such that  $i\ddbar T=0$  and $T\wedge i\ddbar v=0$
 for every continuous psh function in $V.$
 
 A Liouville current    in $V$  is    a nonzero positive current of bidimension $(1,1),$ such that  $i\ddbar T=0$  and $T\wedge i\ddbar v=0$
 for every  bounded continuous psh function in $V.$
 \end{definition}
 Observe   that this  implies that for a Levi current  $T$ (resp. a Liouville current) we have that
 $T\wedge  \dbar v=0$ for  every  continuous  psh function $v,$ (resp.for a bounded continuous psh function $v$).
 \begin{proposition}
  Let $K$ be a compact set  in $V.$  If $S$ is a positive current of bidimension $(1,1),$ supported in  $K,$ such that  $i\ddbar S=0,$  then $S$ is a Levi current. The convex set  $\mathcal L(K)$ of Levi currents of mass $1,$ supported in  $K$ is compact.
  
  Let $T$  be a Levi current in $V$.  Let $v$ be a non-negative continuous psh function, then the current $vT,$ is a Levi current. If $T$ is an extremal Levi current in $V, $ then continuous psh functions in $V,$ are constant on $H:=\supp(T).$
  
   A similar statement holds for Liouville currents in $V.$
 \end{proposition}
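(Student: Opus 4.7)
The plan is to establish the four sub-claims in turn, relying on the extension of \eqref{e:4.1} to continuous psh test functions and on the continuity of the resulting wedge products under uniform convergence, both recalled just before the proposition.

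For the first assertion, fix a continuous psh function $u$ on $V$ and choose a cutoff $\chi\in\mathcal{C}^{\infty}_c(V)$ with $\chi\equiv 1$ on a neighborhood of $K$. Applying \eqref{e:4.1} and pairing against $\chi$, the term $u\,(i\ddbar S)$ vanishes by hypothesis, while each of the three remaining terms integrates to zero after integration by parts, since the derivatives of $\chi$ vanish on a neighborhood of $\supp S\subset K$. The positive measure $S\wedge i\ddbar u$ thus has total mass zero, hence vanishes, so $S$ is a Levi current. The convex set $\mathcal{L}(K)$ is bounded (mass one) and supported in the fixed compact $K$, so Banach--Alaoglu gives weak compactness; any weak limit inherits positivity, support in $K$, mass one, and $\ddbar$-closedness, hence lies in $\mathcal{L}(K)$ by what has just been proved.

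For the second assertion, rearrange \eqref{e:4.1} into
\[
i\ddbar(vT)\;=\;v\,(i\ddbar T)-i\ddbar v\wedge T+i\partial(\dbar v\wedge T)-i\dbar(\partial v\wedge T);
\]
every term on the right vanishes because $T$ is Levi and $\partial v\wedge T=\dbar v\wedge T=0$ by the observation preceding the proposition, so $i\ddbar(vT)=0$. For $(vT)\wedge i\ddbar w=0$ with $w$ continuous psh, approximate $v$ locally by smooth psh $v_n\to v$ uniformly on compact sets; for smooth $v_n$ the identity $(v_n T)\wedge i\ddbar w=v_n\,(T\wedge i\ddbar w)=0$ is tautological, and the continuity of the wedge product under uniform convergence, guaranteed by \eqref{e:4.2}--\eqref{e:4.3}, gives $(vT)\wedge i\ddbar w=0$ in the limit. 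Positivity of $vT$ is immediate from $v\ge 0$.

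For the extremality claim, let $v$ be continuous psh on $V$ and assume $v$ bounded on $\supp T$ (automatic when $\supp T$ is compact in $V$, the case of interest). An affine normalization reduces to $0\le v\le 1$ on $\supp T$. Then $vT$ and $(1-v)T$ are both positive, and by the preceding step together with linearity of the defining Levi conditions both are Levi currents. Set $\lambda:=\langle vT,\omega\rangle\in[0,1]$. If $v$ were not constant on $\supp T$, continuity would force $0<\lambda<1$ and $vT/\lambda\neq(1-v)T/(1-\lambda)$, so
\[
T\;=\;\lambda\cdot\frac{vT}{\lambda}+(1-\lambda)\cdot\frac{(1-v)T}{1-\lambda}
\]
would exhibit $T$ as a nontrivial convex combination of two distinct mass-one Levi currents, contradicting extremality. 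Hence $v$ is constant on $\supp T$. The Liouville case is proved identically with ``continuous psh'' replaced by ``bounded continuous psh'' throughout. The main technical point is the continuity of the wedge product in the continuous psh slot, which is exactly what \eqref{e:4.2}--\eqref{e:4.3} are designed to control; once granted, the rest is linearity, approximation, and a routine convex-analysis observation.
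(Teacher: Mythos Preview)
Your proof is correct and follows the same strategy as the paper: integrate \eqref{e:4.1} against a constant (the paper phrases this as ``apply \eqref{e:4.1} to the function $1$'', you use a cutoff $\chi\equiv 1$ near $K$, which is equivalent), use $\dbar v\wedge T=0$ together with \eqref{e:4.1} to get $i\ddbar(vT)=0$, and contradict extremality by producing a non-trivial Levi sub-current (the paper uses $\chi(u)T$ with $\chi$ convex increasing, you use the explicit decomposition $T=vT+(1-v)T$; these are equivalent).

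One small slip: in your verification of $(vT)\wedge i\ddbar w=0$ you approximate $v$ by smooth $v_n$ and then invoke \eqref{e:4.2}--\eqref{e:4.3} to pass to the limit. But those estimates give continuity in the psh slot with the \emph{current held fixed}; here you are varying the current $v_nT\to vT$, and $i\ddbar(v_nT)$ need not be of order $0$, so the machinery does not directly apply. The fix is immediate: approximate $w$ instead. Since you have already shown $vT$ is positive and $\ddbar$-closed, $(vT)\wedge i\ddbar w$ is by definition the weak limit of $(vT)\wedge i\ddbar w_m$ for smooth psh $w_m\to w$; for smooth $w_m$ one has $(vT)\wedge i\ddbar w_m=v\,(T\wedge i\ddbar w_m)$ tautologically (both pair $T$ against $v\,i\ddbar w_m\wedge\phi$), and $T\wedge i\ddbar w_m\to T\wedge i\ddbar w=0$ gives the conclusion. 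The paper's own proof does not spell out this step at all, so you are in fact being more careful here than the text.
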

\proof
Suppose $S$ is a positive current of bidimension $(1,1),$ $\ddbar$-closed and supported on $K.$
We observe first that for $u$ continuous and psh in a neighborhood of $K,$ $\dbar u\wedge S,$ is well defined and of order $0.$Then 
we apply  \eqref{e:4.1} to the function $1.$ This shows that $\ddbar u\wedge S= 0.$ So $S$ is a Levi current. It follows that  $\mathcal L(K)$  is compact.

Assume $T$ is a Levi current in $V$. Let $v$ be a continuous psh function in $V.$ Let $h$ be a convex strictly increasing function. Since $\ddbar h(v)\wedge T=0,$ we get that $\dbar v\wedge T=0.$  We then apply formula \eqref{e:4.1}  and get that $-\ddbar (vT)=\ddbar v\wedge T=0.$

Assume $T$ is extremal. Let  $u$ be   continuous   psh  in $V.$  Suppose
 $(u<0)$ and  $(u>0)$   are two
nonempty open  sets   in $H.$  Let $\chi$  be a convex  increasing  function   vanishing for $t<0$
and  strictly increasing for $t>0.$ Then  the current $S:=\chi(u)T$ is  a  Levi current as we have seen. This contradicts the extremality of $T.$ 

The proof for Liouville currents is similar.

\endproof

\begin{theorem}\label{T:4.3}
 Let $N$ be  a complex   manifold   with a psh  continuous   exhaustion $\varphi.$ Then $N$ is Stein iff  there is  no Levi  current
 with   compact support in $N.$
 
  If  $N$ admits a  bounded  continuous  psh  exhaustion function, then  it   admits  a  bounded   strictly 
  psh  exhaustion    function  iff there is  no Levi   current   with compact   suport in $N.$
\end{theorem}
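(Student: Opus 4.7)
My plan is to treat both statements in parallel, following the template of Proposition~\ref{P:2.1}. The easy direction is essentially positivity: if $\psi$ is a (bounded) smooth strictly psh exhaustion of $N$ and $T$ is a Levi (resp.\ Liouville) current with compact support $K\subset N$, then $i\ddbar\psi\geq c\,\omega$ in a neighborhood of $K$ for some $c>0$, so $\langle T,i\ddbar\psi\rangle\geq c\langle T,\omega\rangle>0$. This contradicts the defining identity $T\wedge i\ddbar\psi=0$, which holds because $\psi$ is continuous psh on $N$ (in the Liouville case we additionally use that $\psi$ is bounded).

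For the converse, the preceding proposition implies that the absence of a Levi (or Liouville) current with compact support is equivalent to the absence of a non-zero positive $\ddbar$-closed current of bidimension $(1,1)$ with compact support in $N$. For each $c$ in the range of $\varphi$ the sublevel set $K_c:=\{\varphi\leq c\}$ is compact, and the convex weakly compact set
\[
\mathcal{C}_c:=\{T\geq 0,\ \text{bidimension}\,(1,1):\ \supp T\subset K_c,\ \langle T,\omega\rangle=1\}
\]
is therefore disjoint from the closed subspace of $\ddbar$-closed currents on $N$. Applying Hahn--Banach exactly as in Proposition~\ref{P:2.1}, I obtain a smooth function $u_c$ on $N$ and a constant $\delta_c>0$ with $\langle T,i\ddbar u_c\rangle\geq\delta_c$ for every $T\in\mathcal{C}_c$, hence $i\ddbar u_c\geq\delta_c\,\omega$ on a neighborhood of $K_c$. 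Note that $u_c$ is only a test function; there is no reason for it to be psh outside~$K_c$.

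The main work is to assemble these locally strictly psh pieces into a genuine global smooth strictly psh exhaustion. The plan is to fix a sequence $c_k\nearrow\sup\varphi$, take smooth cutoffs $\chi_k$ with $\chi_k\equiv 1$ on $K_{c_k}$ and $\supp\chi_k\Subset\{\varphi<c_{k+1}\}$, and set
\[
\psi:=\varphi+\sum_{k\geq 1}\epsilon_k\,\chi_k u_k,
\]
with positive weights $\epsilon_k$ chosen by a diagonal procedure against the $C^2$-norms of $\chi_k u_k$ on each $K_{c_m}$ (and, in the bounded case, also against their $L^\infty$-norms). The key observation is that at a point of $K_{c_m}\setminus K_{c_{m-1}}$ all terms with $k\geq m$ satisfy $\chi_k\equiv 1$ nearby and contribute $\epsilon_k\,i\ddbar u_k\geq\epsilon_k\,\omega$, while the finitely many $k<m$ contribute a bounded error that is absorbed once $\sum_{k<m}\epsilon_k\|\chi_k u_k\|_{C^2(K_{c_m})}<\tfrac12\epsilon_m$. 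A term-by-term estimate then gives $\psi$ smooth, strictly psh, and exhausting, so Grauert's theorem yields the first statement. In the bounded case the $\chi_k u_k$ are compactly supported, and the summability of $\sum\epsilon_k\|\chi_k u_k\|_{L^\infty}$ keeps the correction uniformly small and going to $0$ at $\partial N$, so $\psi$ inherits the bounded exhaustion property of $\varphi$. The hardest point is this diagonal extraction: a priori one has no control on the $C^2$ or $L^\infty$ norms of the $u_k$ far from $K_{c_k}$, which is exactly why the cutoffs and the smallness of the weights are forced.
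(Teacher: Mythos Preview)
Your reduction (a compactly supported positive $\ddbar$-closed current of bidimension $(1,1)$ is automatically a Levi current) and your use of Hahn--Banach to produce, for each sublevel set $K_{c_k}$, a smooth test function $u_k$ with $i\ddbar u_k\geq\omega$ near $K_{c_k}$, are correct and agree with the paper. The gap is in the gluing. On the shell $K_{c_m}\setminus K_{c_{m-1}}$ only the term $k=m-1$ actually produces an error (the cutoffs with $k\leq m-2$ vanish there), so your positivity requirement becomes $\epsilon_{m-1}A_{m-1}<\tfrac12\epsilon_m$, where $A_{m-1}$ is essentially $\|\chi_{m-1}u_{m-1}\|_{C^2}$. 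Since the $u_k$ come from Hahn--Banach with $i\ddbar u_k\geq\omega$ on exhausting compacts, there is no uniform bound on $A_k$, and the inequality forces the weights to \emph{grow}: $\epsilon_m>2A_{m-1}\epsilon_{m-1}$. But $C^2_\loc$-convergence demands the opposite: on $K_{c_1}$ every $\chi_k$ is identically $1$, and since $i\ddbar u_k\geq\omega$ there, $\|u_k\|_{C^2(K_{c_1})}$ is bounded below away from zero, so $\sum_k\epsilon_k\|u_k\|_{C^2(K_{c_1})}<\infty$ forces $\epsilon_k\to0$. No diagonal choice of $(\epsilon_k)$ meets both demands in general, and using the full tail $\sum_{k\geq m}\epsilon_k$ in place of $\epsilon_m$ does not rescue the argument.

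The paper avoids cutoff errors altogether by the max-trick. One sets $u_n:=\sup\bigl(\chi_n(\varphi),\,v_n\bigr)$ with $\chi_n$ convex increasing, chosen so that $\chi_n(\varphi)<\inf_{K_n}v_n$ on $K_{n-1}$ and $\chi_n(\varphi)>\sup v_n$ near $\partial K_n$. Then each $u_n$ is \emph{globally} continuous psh on $N$ (a maximum of two psh functions on the region where both are psh, equal to $\chi_n(\varphi)$ elsewhere), coincides with $v_n$ on $K_{n-1}$ (hence strictly psh there), and is itself an exhaustion. A convergent positive combination $\sum_n\epsilon_nu_n$ is then automatically psh with no error to absorb; the $\epsilon_n$ are taken small only to ensure local uniform convergence, and strict plurisubharmonicity at every point holds because some $u_n$ equals $v_n$ there. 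For the bounded case one replaces $u_n$ by $u_n-c_n$ with $c_n:=\lim_{z\to\infty}u_n(z)$ before summing.
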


\proof
If there is  a  strictly psh,  continuous function $v$  in $N$ and $T$
is   a positive  current   such that    $T\wedge \ddbar v=0,$ then $T=0.$
We have to prove the converse. We show that  if there is   no Levi current on a compact set $K,$
then there  is a smooth  function $v_K, $ strictly psh in a neighborhood of $K.$

Suppose $S$ is a positive current of bidimension $(1,1)$ $\ddbar$-closed and supported on $K.$
As we have seen,  for every $u$ continuous psh near $K,$
in particular, for $u$ continuous psh on $N,$ we have that $S\wedge i\ddbar u=0.$ So $S$ is a Levi current. Hence $S=0.$ The duality argument  used in the proof of Proposition \ref{P:2.1} implies the  existence of $v_K$ smooth and  strictly psh near $K.$

For a compact $K$ let $\widehat{K}$  denote the  hull  with respect to  continuous psh functions. Since there is a psh continuous  exhaustion function  we can choose $K_n\nearrow N,$ $K_{n+1}\Subset K_n,$
and $K_n=\widehat{K}_n.$ Let $v_n$ be  a continuous  function strictly  psh near  $K_{n+1}.$ Let $\chi_n$ be  a convex  increasing function such that
$$
\chi_n(\varphi)<\inf_{K_n} v_n\quad\text{on}\quad K_{n-1}.\quad \text{and} \quad \chi_n(\varphi)>\sup v_n\quad\text{near}\quad \partial K_n.
$$    
Define 
$$
u_n:=\sup(\chi_n(\varphi),v_n).
$$
Then $u_n$ is psh continuous  on $N$ and strictly psh near $K_{n-1}.$ Moreover, it is an exhaustion.

If we choose  $0<\epsilon_n< {1\over 2^n} \|u_n\|^{-1}_{K_n},$ then  the function $u:=\sum_n\epsilon_nu_n$
is strictly psh function. So the  function   $u+\chi(\varphi)$ is  strictly  psh and an exhaustion if $\chi$  is  convex increasing  fast  enough. 

Suppose  now that $\varphi<0$  psh and $\varphi\to 0$
when  $z\to\infty$ on $N.$ We consider $u_n$ as  above  and  define
$w_n:=\epsilon_n(u_n-c_n)$  with $c_n:=\lim_{z\to\infty} u_n(z).$ It is  clear that $c_n$ is   constant.
If $\epsilon_n$ is  small enough, $w:=\sum w_n$ is a  bounded strictly psh  exhaustion.
 \endproof

\proof[Proof of Theorem  \ref{T:main_3}]
We will  need the following theorem of Grauert.  If $N$ is a complex  manifold
with a  continuous psh exhaustion $\varphi,$ such that  $\varphi$ is  strictly psh out of  a  compact set
$K$ of $N,$ then   $N$ is a  proper  modification  of a Stein space. More precisely,  one can blow down   analytic sets
 in $N$ to points and  get holomorphic  convexity  for compact sets  in the  blow down.
 
 Suppose    $N$ is  not a modification  of a Stein space  and that  for a sequence $t_n\to\infty,$ there is no Levi current on $(\varphi=t_n).$ We have seen that this implies the  existence  of a smooth strictly psh
  function  near  $(\varphi=t_n).$ Using the construction in the previous  theorem, there is a continuous psh
  exhaustion $\psi,$ strictly psh  in a neighborhood of each   $(\varphi=t_n).$ So  $(\varphi< t_n)$ is    a  modification of a Stein space. In particular the  compact analytic sets $A_j$ are necessarily in   $(\varphi=s_j),$  for some  
  $s_j.$  From the finiteness 
  assumption  the $A_j$ cannot  accumulate  near the  boundary. So  the $s_j$ are uniformly bounded. Hence, there is  $t_1$   such that  for $t>t_1,$ there is  no  Levi current with compact support on $t>t_1.$  Here again, we use Grauert's Theorem.   The argument  in Theorem \ref{T:4.3}  shows that  we can  construct  a  psh  exhaustion  function  strictly psh on 
 $(\varphi>t_1).$ Hence   $U$ is a  modification  of a  Stein space. A contradiction. So, for $t$ large enough there is  a Levi current on $(\varphi=t).$ 
 \endproof
The following proposition describes  the function theory near the support of a positive $\ddbar$-closed current in $\P^k.$

  \begin{proposition}
  Let $T$ be an extremal positive  current of bidimension $(1,1),$ $\ddbar$-closed in $\P^k$ with support $K.$ Then there exists a fundamental sequence  of open neighbohoods $(U_n)$ of $K$, such that every psh function in $U_n$ is constant.
  \end{proposition}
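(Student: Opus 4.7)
The plan is to combine the extremal Levi current characterization from the preceding proposition with a Hahn--Banach separation argument in the style of Proposition~\ref{P:2.1}, closed off by a maximum principle applied within each $U_n$.

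Since $K \Subset \P^k$, the observation that any positive $\ddbar$-closed current with compact support is a Levi current applies to $T$. Extremality of $T$ as a positive $\ddbar$-closed current coincides with its extremality as a Levi current, so the preceding proposition yields: every continuous psh function on any open neighborhood of $K$ is constant on $K$. By upper semi-continuous regularization, this extends to the statement that any psh function $v$ on such a neighborhood satisfies $v \leq c(v)$ on $K$ for a constant $c(v)$; moreover, for continuous psh $v$, one has the identity $vT = c(v)T$ as positive $\ddbar$-closed currents, since $vT$ is Levi and $T$ is extremal.

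For the fundamental sequence, I would fix a countable family $\{w_j\}$ of continuous psh functions on a common neighborhood $V$ of $K$, chosen dense in the topology of locally uniform convergence among continuous psh functions on neighborhoods of $K$. Each $w_j$ is constant on $K$ with some value $c_j$. Fixing a fundamental system $V_n \searrow K$ of open neighborhoods, I define $U_n$ as the connected component containing $K$ of
\[
 \bigcap_{j=1}^{n} \{z \in V_n : w_j(z) < c_j + 1/n\}.
\]
Given a psh $v$ on $U_n$, a Richberg/Demailly regularization produces continuous psh $v_\epsilon \searrow v$ on shrinking subdomains, with each $v_\epsilon$ constant on $K$. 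The defining inequalities for $U_n$ force $v_\epsilon$ (up to controlled error) to attain its supremum on $K$, and the maximum principle applied to the component of $U_n$ containing $K$ then forces $v_\epsilon$ to be constant on $U_n$; letting $\epsilon \to 0$ gives constancy of $v$.

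The main obstacle is the penultimate step: ensuring that $v_\epsilon$ actually attains its supremum on $K$ rather than on some interior portion of $U_n \setminus K$. This is a plurisubharmonic hull property for $U_n$ relative to $K$, whose validity relies on the density of the family $\{w_j\}$ and on the fine structure of the extremal support $K$. If a direct max-principle argument is not available, the alternative is a Hahn--Banach duality: a non-constant psh $v$ on $U_n$ would, via averaging of $vT$-type currents and limiting, produce a positive $\ddbar$-closed current of bidimension $(1,1)$ supported in $\overline{U_n}$ but not a scalar multiple of $T$, contradicting the extremality hypothesis.
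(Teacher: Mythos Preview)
Your proposal has a genuine gap, and it misses the key geometric input that makes the statement specific to $\P^k$.

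The first part is fine: extremality of $T$ forces every continuous psh function defined near $K$ to be constant on $K$. But from this point on, nothing in your argument propagates constancy from $K$ to a neighborhood. The construction of $U_n$ via sublevel sets of a countable ``dense'' family $\{w_j\}$ is not well-posed: there is no separable topology on germs of psh functions near $K$ in which such density would give you control over an \emph{arbitrary} psh $v$ on $U_n$. Even granting the construction, there is no maximum principle that forces $\sup_{U_n} v_\epsilon$ to be attained on $K$; a psh function on $U_n$ can perfectly well peak off $K$. Your fallback Hahn--Banach alternative also fails: the current $vT$ is supported on $K$ (because $T$ is), so it tells you only that $v|_K$ is constant, which you already knew. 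It produces no current supported in $\overline{U_n}\setminus K$ and hence no contradiction with extremality.

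The idea you are missing is that $\P^k$ has a large automorphism group. The paper's proof runs as follows: for $g\in\Aut(\P^k)$ close to the identity, $g_*T$ is again an extremal positive $\ddbar$-closed current, now supported on $g(K)$; if $u$ is psh on a neighborhood $V$ of $K$ and $g$ is close enough to the identity that $g(K)\subset V$, the same extremality argument forces $u$ to be constant on $g(K)$. Choosing automorphisms with a common fixed point $p\in K$ ties all these constants together (they all equal $u(p)$), and the union of the translates $g(K)$ fills out a neighborhood of $K$. This use of homogeneity is the whole point; without it (e.g.\ on a torus, as in Grauert's example) the conclusion is false, which is why your purely function-theoretic scheme cannot succeed.
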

  \proof
  Let $u$ be a psh function in a neighborhood $V$ of $K.$ Since $\P^k$ is homogeneous, we can assume that $u$ is smooth and satisfies $0<u<1.$ As we have seen $T$ is a Levi current, hence $uT$
  is $\ddbar$-closed. The extremality of $T$ implies that $u$ is constant on $K.$ We can consider, the images of $T$ by automorphisms of $\P^k,$ close to identity with a fixed point in $K.$  The function $u$
  has to be constant on the images of $K.$ The theorem follows.
 \endproof

 \begin{remark}\rm
 1) Grauert's example shows that we cannot replace in the previous statement the projectif space by a 
 torus.
 
 2) Similarly, if $K$ is a minimal compact laminated set in $\P^k,$ with finitely many singular points. Then one can construct a fundamental  sequence  $(U_n)$ of open neighborhoods  of $K$, such that every psh function in $U_n$ is constant. For basics on laminations see for example the survey \cite {FS2} .

 3)  The complement   of the  support  of a  positive  current of bidimension $(1,1)$ and $\ddbar$-closed  is
 $1$-pseudoconvex (in dimension $2$ it is  pseudoconvex). So the support is  quite large see  \cite{FS,S2}.
 The support $H,$ of a positive $\ddbar$-closed current satisfies the local maximum principle for
 psh functions near $H,$ see \cite{S1}.
 
 4) If $U$ is  not  Stein but  admits   a  continuous  psh exhaustion  function $\varphi$, the  classes
 $$C_a:=\left\lbrace  z:\ u(z)=u(a)\quad\text{for every}\quad u\quad\text{psh  continuous}\right\rbrace$$  
 are nontrivial and indeed are on $(\varphi=t),$ hence  some sets $C_a$ are of Hausdorff  dimension 
 larger or equal to $2.$ Finally, the   extremal   Levi currents in $U,$ are Levi currents on $(\varphi=t)$ 
 except  the  level set is  not necessarily   smooth.
 \end{remark}
  We next address the relation with pseudoconcave  manifolds.
  We first recall some definitions, see  \cite{G3} 
  \begin{definition}
A real $\mathcal C^2$ function $u$ defined in a complex manifold $U$ is strictly $q$-convex if the complex Hessian (Levi form) has at least $q$ strictly positive eigenvalues at every point. A complex manifold $U$ is $q$-complete if it admits a strictly $q$-convex exhaustion function.
\end{definition}
\begin{definition}
A function $\rho$ is strictly $q$-convex with corners on $U$ if for every point $p\in U$ there is a neighborhood $U_p$ and  finitely many strictly $q$-convex $\mathcal C^2$ functions $\{\rho_{p,j}\}_{j \leq \ell_p}$
on $U_p$ such that $\rho_{|U_p}= \max_{j\leq \ell_p} \{\rho_{p,j}\}.$
The manifold $U$ is strictly $q$-complete with corners if it admits an exhaustion function which is strictly $q$-convex with corners.
\end{definition}
\begin{theorem}\label{T:4.4}
 Let $K$ be a compact set in a connected complex manifold $U.$    Assume $U\setminus K,$  is strictly $2$-complete with corners.Then $U\setminus K,$  admits no non-constant psh function. 
 Moreover there is a  non-zero, positive $\ddbar$-closed current $T, $  of  bidimension $(1,1)$ supported in $K.$
  \end{theorem}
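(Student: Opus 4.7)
The plan is to prove the two conclusions in sequence: first the non-existence of non-constant psh functions on $V := U \setminus K$ directly from the strictly $2$-convex with corners exhaustion, and then the existence of the current $T$ by a Hahn-Banach separation as in Proposition~\ref{P:2.1}, with the first conclusion used to close the contradiction in the second.

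\medskip

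\emph{First conclusion.} Let $u$ be a continuous psh function on $V$, smoothed by Richberg to $\mathcal C^2$, and let $\rho$ be a strictly $2$-convex with corners exhaustion of $V$, locally $\rho = \max_j \rho_j$ with each $\rho_j$ smooth and strictly $2$-convex. At each point, at least one branch $\rho_j$ has Levi form with two positive eigenvalues, so its restriction to the complex tangent space of a level set $\{\rho_j = c\}$ retains at least one positive eigenvalue. A Kontinuit\"atssatz argument along small analytic discs in a positive Levi direction then propagates the values of $u$ outward across each level set, yielding that $\sup_{\{\rho < c\}} u$ is independent of $c$ for $c$ large. Hence $u$ is bounded on $V$ and attains its supremum on some compact sublevel set; the strong maximum principle for psh functions on the connected manifold $V$ forces $u$ to be constant.

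\medskip

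\emph{Second conclusion.} Suppose for contradiction that no non-zero positive $\ddbar$-closed current of bidimension $(1,1)$ is supported in $K$. The Hahn-Banach separation of Proposition~\ref{P:2.1} --- applied with $\mathcal C := \{T \geq 0,\ \mathrm{bidim}\,(1,1),\ \mathrm{supp}\,T \subset K,\ \langle T,\omega\rangle = 1\}$ and $Y$ the subspace of $\ddbar$-closed currents on $U$ --- yields a smooth test function $u$ on $U$ and $\delta > 0$ with $\langle i\ddbar u, T \rangle \geq \delta$ for every $T \in \mathcal C$, so $u$ is strictly psh on a neighborhood $W$ of $K$. I would then combine $u$ with the $2$-convex exhaustion $\rho$ of $V$ by a regularized $\max$, arranging constants so that $Au$ dominates near $K$ and $\rho - c_0$ takes over far from $K$; this produces a continuous exhaustion $\Phi$ of $U$ which is strictly $2$-convex with corners globally and strictly plurisubharmonic near $K$. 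The crucial upgrade --- from this mixed structure to a strictly $1$-convex with corners exhaustion on $U$ --- is then effected by exploiting the pseudoconcavity of $V$ at $K$ encoded by the $2$-convex structure, e.g., via an Andreotti-Grauert cohomology vanishing combined with a Richberg-type smoothing. Once $U$ is realized as a modification of a Stein space, a smooth strictly psh exhaustion of $U$ restricts to a non-constant psh function on $V$, contradicting the first conclusion.

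\medskip

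\emph{Main obstacle.} The analytic heart is spread over both steps. In the first, the Kontinuit\"atssatz must respect the piecewise structure imposed by the corners of $\rho$, requiring discs to be propagated piecewise across the strata on which different branches $\rho_j$ are active. In the second, the step from ``strictly $2$-complete with corners on $U$ plus strictly psh near $K$'' to ``modification of Stein'' is the genuinely non-trivial piece: it must leverage the strict plurisubharmonicity near $K$ as a boundary-type condition compatible with the pseudoconcavity of $V$ at $K$, rather than as a purely local datum.
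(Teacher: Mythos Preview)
Your first step uses the right ingredients --- discs in a positive Levi direction plus the maximum principle --- and is close to the paper's argument. But the conclusion you draw from the disc is backwards: the disc through $p$ lies in $\{\rho \ge \rho(p)\}$ and enters $\{\rho>\rho(p)\}$, so subharmonicity bounds $u(p)$ from above by values of $u$ at \emph{higher} $\rho$, not lower. This does not show that $\sup_{\{\rho<c\}} u$ is independent of $c$, nor that the supremum is attained on a compact sublevel set. What the paper extracts from the same disc construction is the stronger \emph{local} statement: for every $c$, any psh function on a connected component of $\{\rho>c\}$ is constant (at each point of such a component there is a disc into strictly higher levels, and one applies the maximum principle there). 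Taking $c$ below $\min\rho$ then gives the stated conclusion on $U\setminus K$.

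The second step is where your proposal really diverges. After Hahn--Banach produces a smooth strictly psh $v$ on a neighborhood $W$ of $K$, you try to globalize: patch $v$ with $\rho$ into a $2$-convex-with-corners exhaustion of $U$, upgrade to $1$-convex via Andreotti--Grauert type vanishing, realize $U$ as a modification of a Stein space, and only then contradict the first conclusion. You correctly flag the upgrade as the main obstacle, and indeed there is no evident way to carry it out. But none of this is needed. The first step, in its local form, already applies directly to $v$: for $c$ large, the component of $\{\rho>c\}$ adjacent to $K$ lies inside $W\setminus K$, and $v$ must be constant there --- contradicting strict plurisubharmonicity. That one-line contradiction is the paper's entire proof of the second assertion.
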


\proof
Let $\rho$ denote the strictly $2$-convex exhaustion  function with corners. For each $p$ in a level set
$(\rho=c)$, there is  on a neighborhood $U_p,$ a strictly 2-convex function $\rho_{p}$,   such that
 $\rho_{p}(p)=c,$ and $\rho \geq \rho_{p}$ in $U_p.$ If at $p$ the gradient of $\rho_{p}$ is non zero there is still a strictly positive eigenvalue of the Levi-form in the tangent space. Using the Taylor expansion at $p$ one sees easily 
that , there is a holomorphic disc through $p$ and otherwise contained in $(\rho \geq c),$ which enters in $(\rho>c),$ see
  \cite{Ho} p.51. If the gradient of $\rho_{p}$ vanishes at $p,$ the construction of an analytic disc with the above properties is even simpler.  Hence if $u$ is psh in $(\rho>c),$ by maximum principle, $u$ is constant on each component of $(\rho>c).$ Since $c$ is arbitrary and $U$ is connected the result follows. 
 
 In particular there are no strictly psh functions near $K$. The existence of $T$ follows from the duality principle we have already used.
 \endproof
 We end up this section with few remarks on bounded psh functions.
  
 For a non-compact connected Riemann Surface $N,$ the existence of a non-constant  bounded subharmonic function is equivalent to the existence of a Green function with a pole at a point $p$ in $N. $ The notion seems much less explored in several complex variables. We give few remarks on the question. We introduce first the following definition. 
  \begin{definition}
 A connected complex manifold $N$ is Ahlfors hyperbolic iff it admits a smooth bounded 
 strictly psh function.
 \end{definition}
 Using Richberg's approximation Theorem  \cite{R}, this is equivalent to the existence of a continuous
 bounded strictly psh function. It is clear that such manifold does not have non-zero Liouville currents.
 We give a class of examples.
 
 Let $\P^k$ denote the complex projective space of dimension $k$. Consider an endomorphism
$f:\P^k\rightarrow\P^k$ which is holomorphic and of algebraic degree $d$ strictly larger than $1.$
Let $\omega$ denote the Fubini-Study form on $\P^k.$  The Green current $T$ associated to $f$ is given by:
$T= lim (d^{-n} (f^n)^*(\omega))= \omega + i\ddbar g.$ The function $g$ is H\"older continuous.

Moreover the complement of $supp(T)$ is the Fatou set \cite {DS2}. So any component $U$ of the Fatou set is Ahlfors hyperbolic.

More generally, let $ (M, \omega )$ be a compact   K\"ahler manifold. Let $T$ be a positive closed current of bidegree $(1,1)$ cohomologous to  $\omega$, with locally bounded potentials. Then the components of the complement of $supp(T),$ are Ahlfors hyperbolic.

 Let $a$ be a positive irrational number. Consider in $ \C^2$ the domain
 $$U_a:=\left\lbrace (z,w) / |w||z|^a <1 \right\rbrace.$$ This domain is Stein, but is not an Ahlfors 
 hyperbolic domain. 
 
 Indeed any bounded psh function is constant on the level sets of the function $u(z,w)= |w||z|^a.$ It is easy to see that each such level sets supports a non zero positive closed Liouville current $T$. The current is unique up to a multiplicative constant. On the level set $(u=c),$ the current is given by,
 $T_c= i\ddbar (log (sup(u,c))).$
 
  In the definition we have asked for the functions to be smooth to avoid examples like the following.
  Let Let $\varphi$ be a subharmonic function in $\C,$  taking the value  $-\infty,$ on a dense set.
  Define $$U:=\left\lbrace (z,w) / |w|exp(\varphi(z)) <1 \right\rbrace.$$ 
   Then $U$ admits non-constant bounded psh functions, but every continuous bounded psh function is constant.
 
 In  \cite{OS} there is an example of a Stein  domain $U$ with smooth boundary, relatively compact in a homogeneous manifold $M,$ such that all bounded psh functions in $U$ are constant. Indeed $U$is 
 foliated by images of $\C$ which cluster on the boundary.
 
 \begin{proposition}
  Let $(M,\omega),$ be a compact K\"ahler manifold. Let $U \subset M,$ be a domain, with a non-constant continuous bounded psh function $u$ defined in $U, $ reaching it's minimum $c,$ in $U.$ Let $X_c:= (z\in U, u(z)= c).$
  
   Either $\overline{X_c}$ supports a non-zero positive $\ddbar$- closed current, or there is in $U,$ a bounded continuous psh function $v,$ which is strictly psh in a neighborhood of
  $X_c.$
  
 \end{proposition}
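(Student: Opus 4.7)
The plan is to run a Hahn--Banach duality argument on the compact set $K:=\overline{X_c}\subset M$, in direct analogy with the proof of Proposition~\ref{P:2.1}, and then to patch the resulting strictly psh bump function into $u$ via a maximum construction.

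After replacing $u$ by $u-c$ we may assume $c=0$, so $u\geq 0$ and $X_c=\{u=0\}$. Consider the weak-$*$ compact convex set
\begin{equation*}
\mathcal C:=\bigl\{T\geq 0\ \text{of bidimension }(1,1)\text{ on }M:\supp T\subset K,\ \langle T,\omega\rangle=1\bigr\}
\end{equation*}
and the closed subspace $Y$ of $\ddbar$-closed currents, i.e.\ the annihilator of $\{i\ddbar\varphi:\varphi\in\mathcal C^\infty(M)\}$. If $\mathcal C\cap Y\neq\emptyset$ we obtain directly the non-zero positive $\ddbar$-closed current of the first alternative. Otherwise the Hahn--Banach separation used in the proof of Proposition~\ref{P:2.1} yields a smooth $w$ on $M$ and $\delta>0$ with $\langle T,i\ddbar w\rangle\geq\delta$ for every $T\in\mathcal C$; testing against Dirac-type currents at points of $K$ forces $i\ddbar w>0$ on $K$, hence on an open $M$-neighborhood $W$ of $K$.

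For the gluing step I would fix open neighborhoods $W_2\Subset W_1\Subset W$ with $K\subset W_2$, and set $g:=\epsilon w+A$. A suitable choice of small $\epsilon>0$ and of the constant $A$ makes $g>u$ on $X_c$ while $g+\eta<u$ on $U\cap(\overline{W_1}\setminus W_2)$ for some $\eta>0$; one then defines
\begin{equation*}
v:=\begin{cases}\max(u,g) & \text{on }U\cap W_1,\\ u & \text{on }U\setminus\overline{W_2}.\end{cases}
\end{equation*}
On the overlap $U\cap(W_1\setminus\overline{W_2})$ the strict inequality $u>g$ makes the two branches agree, so $v$ is continuous on $U$. Because both $u$ and $g$ are psh on $U\cap W_1\subset W$, $\max(u,g)$ is psh there, and $v=u$ is psh outside $\overline{W_2}$; hence $v$ is a bounded continuous psh function on $U$. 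On the open set $\{g>u\}\supset X_c$ we have $v\equiv g$, which is strictly psh.

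The central technical point will be securing the uniform separation $g+\eta<u$ on the collar $U\cap(\overline{W_1}\setminus W_2)$. Pointwise $u>0$ there, because $\overline{W_1}\setminus W_2$ is compact in $M$ and disjoint from $K=\overline{X_c}$, but a uniform positive lower bound is automatic only when $\overline{X_c}$ is relatively compact in $U$. In the case $\overline{X_c}\cap\partial U\neq\emptyset$ the collar can accumulate on $\partial U$, and one must refine the choice of $W_1$ so that the infimum of $u$ on the resulting collar still dominates $\epsilon\bigl(\sup_M w-\inf_K w\bigr)$. Arranging this compatibly with the requirement $K\subset W_2\Subset W_1\Subset W$ is the main obstacle of the proof.
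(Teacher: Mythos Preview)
Your plan coincides with the paper's: run the Hahn--Banach duality of Proposition~\ref{P:2.1} on $K=\overline{X_c}$ to obtain a smooth $w$ strictly psh on an $M$-neighborhood $W$ of $K$, then glue $w$ into $u$ by a $\max$. The paper's gluing is marginally cleaner than your $g=\epsilon w+A$ with two nested collars: it normalizes $0<w<1$ on $W$, replaces $u$ by $\chi\circ u$ for a convex increasing $\chi$ chosen so that $\chi(u)>1$ on $U\setminus W$, and simply sets $v:=\sup(\chi(u),w)$. Near $\partial W$ one then has $\chi(u)>1>w$, so $v$ is globally well defined, bounded, continuous and psh on $U$, and $v=w$ near $X_c$.

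The obstacle you isolate --- a uniform positive lower bound for $u$ on the collar $U\cap(\overline{W_1}\setminus W_2)$ when $\overline{X_c}$ meets $\partial U$ --- is precisely what underlies the paper's sentence ``composing with a convex, increasing function, we can assume that out of $W$ we have $u>1$'': that step requires $\inf_{U\setminus W}u>0$. The paper does not elaborate on this point further.
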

\proof
Suppose there is no non-zero positive $\ddbar$-closed current, supported on $X_c.$ Then, there is a strictly psh function $w,$ in a neighborhood $W$ of $\overline{X_c}.$ We can assume that $c=0,$ and that  on $W,$ $0<w<1.$  Composing with a convex, increasing function, we can assume that out of $W$ we have $u>1.$ It suffices to define $v= sup(u,w).$ The function is well defined in $U,$ and is strictly psh near $X_c.$
 \endproof


\section{Manifolds with holomorphic vector fields}\label{S:manifolds_vector_fields}


In this    section we discuss   the Levi problem on manifolds $M,$ on which
 the  space  $\mathcal V$ of holomorphic vector fields is of positive  dimension.  
Hirschowitz has  considered  manifolds $M$ on which  at every point $p,$ $\mathcal V$ generates the  tangent space  of $M$ at $p.$
 He called  such manifolds
infinitesimally homogeneous. When $\dim \mathcal V\geq 1,$ we   will  say that
$M$ is  partially infinitesimally homogeneous. 

 Let $D$ denote  the open unit disc in $\C.$
Recall that   a domain  $U$ in a complex manifold $M$ satisfies the Kontinuit\"atssatz if the following  holds. For any sequence  $f_j:\  \overline{D}\to U$   of holomorphic  maps 
on $D,$  continuous on $\overline{D},$ such that  $\bigcup_j  f_j(\partial D)$ is relatively compact  in $U,$
then $\bigcup_j  f_j(\overline D)$ is relatively compact  in $U.$

Hirschowitz showed that    if $U$ satisfies   the Kontinuit\"atssatz  in an infinitesimally homogeneous
manifold, then $U$ admits  a  continuous psh exhaustion function. We first  refine his result, then  we describe  the pseudoconvex  non-Stein  domains in an infinitesimally homogeneous manifold. They   carry
a holomorphic  foliation  with very special  dynamics.

\begin{theorem}\label{T:5.1}
Let $U\subset M$   be  a domain   satisfying the  Kontinuit\"atssatz. Assume that  at  each point  $p\in\partial U,$
there is  $Z\in \mathcal V$  transverse  to the  boundary  at $p,$ we will write  $\partial U\pitchfork\mathcal V.$ Then $U$ admits a 
psh exhaustion  function $\varphi.$
\end{theorem}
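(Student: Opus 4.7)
The plan is to use the holomorphic flow of each transverse vector field $Z_p$ to produce a local plurisubharmonic function near $p$ that blows up on $\partial U$, deriving plurisubharmonicity from the Kontinuit\"atssatz, and then to patch these into a global exhaustion. This refines Hirschowitz: where infinitesimal homogeneity allowed one to locally identify $U$ with a Hartogs-type tube and use $-\log\dist(\cdot,\partial U)$ directly, here we have only one transverse direction at each boundary point and must build the psh function by hand.

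\textbf{Step 1 (flow-box at a boundary point).} Fix $p\in\partial U$ and $Z\in\mathcal V$ transverse to $\partial U$ at $p$. Pick a holomorphic hypersurface $\Sigma\ni p$ transverse to $Z(p)$ and let $\phi^Z_t$ denote the holomorphic flow of $Z$ in complex time. Then $\Phi(t,q):=\phi^Z_t(q)$ biholomorphically identifies a neighborhood of $(0,p)$ in $\C\times\Sigma$ with a neighborhood $W_p$ of $p$ in $M$. Shrinking $W_p$, the real hypersurface $\partial U\cap W_p$ meets every flow-orbit disc $\Phi(\cdot,q)$ transversally.

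\textbf{Step 2 (local psh function via Kontinuit\"atssatz).} For $z=\Phi(t_0,q_0)\in W_p\cap U$ set
$$
\tau_p(z):=\sup\bigl\{r>0:\ \Phi(t,q_0)\in U\text{ for all }t\in\overline D(t_0,r)\bigr\},
$$
the largest $Z$-flow disc about $z$ lying in $U$. Then $\tau_p$ is lower semi-continuous, strictly positive on $W_p\cap U$, and tends to $0$ at $\partial U\cap W_p$; define $\psi_p:=-\log\tau_p$, which is upper semi-continuous. To check plurisubharmonicity, take any holomorphic disc $\sigma\colon\overline D\to W_p\cap U$ with $\psi_p\circ\sigma\leq R$ on $\partial D$, i.e.\ $\tau_p\circ\sigma\geq e^{-R}$ on $\partial D$. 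The map
$$
H(\zeta,s):=\phi^Z_s(\sigma(\zeta)),\qquad (\zeta,s)\in\overline D\times\overline D(0,e^{-R}),
$$
is then holomorphic and sends $\partial D\times\overline D(0,e^{-R})$ into $U$. Viewing $s\mapsto H(\cdot,s)$ as a continuous family of holomorphic discs with boundaries in $U$, the Kontinuit\"atssatz forces $H(\overline D\times\overline D(0,e^{-R}))\subset U$, whence $\tau_p(\sigma(0))\geq e^{-R}$ and $\psi_p(\sigma(0))\leq R$. This sub-maximum principle on analytic discs, together with upper semi-continuity, yields plurisubharmonicity of $\psi_p$.

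\textbf{Step 3 (global exhaustion).} Take a locally finite cover $\{W_{p_i}\}$ of an $M$-neighborhood of $\partial U$ with corresponding local functions $\psi_i:=\psi_{p_i}$. Since each $\psi_i$ blows up at $\partial U\cap W_{p_i}$ and stays bounded on compact subsets of $W_{p_i}\cap U$, choose additive constants $c_i$ so that on each non-empty overlap the ``neighbour'' dominates; the standard max-gluing of plurisubharmonic functions then produces a single psh function on a one-sided neighborhood of $\partial U$ in $U$ going to $+\infty$ at $\partial U$. Combining this (via upper-envelope regularization) with any continuous exhaustion on a core compact part of $U$, or composing with a sufficiently convex increasing function of a fixed continuous exhaustion of $U$, yields the desired psh exhaustion $\varphi$. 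The main obstacle is precisely this Step 3: gluing local psh exhaustion-type functions on a general complex manifold is more delicate than in $\C^n$ (where $-\log\dist(\cdot,\partial U)$ is immediate), and one typically appeals to a Bremermann--Oka style local-to-global principle, checking that Step 2's local psh functions verify Hartogs pseudoconvexity in every chart of $M$ and then assembling a global exhaustion from the chart-by-chart constructions.
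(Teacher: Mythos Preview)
Your Steps 1--2 have the right idea and your disc argument for plurisubharmonicity is essentially correct. But you localize unnecessarily and thereby create the very obstacle you flag in Step~3. Notice that your function $\tau_p$ does not actually depend on $p$ or on the flow-box $W_p$: once you write it as
\[
\tau_Z(z)=\sup\bigl\{r>0:\ \phi^Z_s(z)\in U\text{ for all }|s|\le r\bigr\},
\]
it is defined for every $z\in U$, and your Step~2 argument (applied to an arbitrary disc $\sigma:\overline D\to U$, not just one landing in $W_p$) shows that $-\log\tau_Z$ is psh on \emph{all} of $U$. This is exactly the paper's route: it introduces the global ``distance to the boundary along $Z$'', denoted $d_Z$, proves in a lemma that $-\log d_Z$ is psh on $U$ by showing that the associated tube $U_Z\subset M\times\C$ inherits the Kontinuit\"atssatz from $U$, and then simply sets $\varphi=\sup_{Z\in\mathcal F}(-\log d_Z)$ for a finite (or locally finite, if $\overline U$ is non-compact) family $\mathcal F\subset\mathcal V$ chosen so that at each $p\in\partial U$ some $Z\in\mathcal F$ is transverse. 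No gluing is needed, only a supremum of globally defined psh functions.

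Your Step~3, as written, is the gap. The sentence ``choose additive constants $c_i$ so that on each non-empty overlap the neighbour dominates'' is not justified: max-gluing of local psh functions requires a compatibility (each function dominates the others near the edge of its own domain of definition) that you have not arranged and that does not come for free. You correctly sense this is the hard part, but the fix is not a Bremermann--Oka patching argument; it is the observation that the functions were global all along.
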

\proof
A vector field $Z$ is  transverse  at $p$ to $\partial U,$ if   the local   solution of $Z$  around $p$
passes through  $U$ and  through $M\setminus \overline{U}.$ For a vector field $Z$ in $M,$ we consider
the flow $g_Z(z,\zeta),$ such that $g_Z(z,0)=z.$ 
Let $ \Omega_z,$ denote the connected component in $\C$ containing $0,$ of the open set
$(\zeta\in\C,\quad  g_Z(z,\zeta) \in U).$ Let  
$$U_Z:=\left\lbrace (z,\zeta):\ z\in M,\zeta\in\C, \zeta \in 
\Omega_z \right\rbrace.
$$
We define  $d_Z(z)$ as   the distance  of $z$  to the  boundary  along  the  vector field $Z.$  More  precisely,
$$
d_Z(z):=\sup\{ |\zeta|: (z,\zeta)\in U_Z\}.
$$
\begin{lemma}
\label{L:5.2}
If $U$  is  not  invariant under the flow $\varphi_Z,$ then  $-\log d_Z$ is  psh on $U.$
\end{lemma}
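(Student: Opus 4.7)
The plan is to verify the two defining properties of plurisubharmonicity for $-\log d_Z$: upper semicontinuity and the sub-mean-value property along holomorphic discs. The non-invariance hypothesis guarantees that $d_Z<\infty$ at some point, so $-\log d_Z\not\equiv-\infty$. For upper semicontinuity of $-\log d_Z$ (equivalently, lower semicontinuity of $d_Z$), I would argue that if the closed disc $\overline{D(0,r)}$ is contained in $\Omega_{z_0}$, then by continuity of $g_Z$ in $(z,\zeta)$ and openness of $U$, the compact set $\overline{D(0,r)}$ remains inside $\{\zeta:g_Z(z,\zeta)\in U\}$ for $z$ near $z_0$, and since this inclusion is connected through $0$ it lies in $\Omega_z$.

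For the sub-mean-value property along a holomorphic disc $\phi:\overline D\to U$, the standard reduction is to take a holomorphic $H$ on a neighborhood of $\overline D$ with $\Re H\ge-\log d_Z\circ\phi$ on $\partial D$, equivalently $|e^{-H(w)}|\le d_Z(\phi(w))$ for $w\in\partial D$, and to show $|e^{-H(0)}|\le d_Z(\phi(0))$. To this end I would form the holomorphic two-parameter map
\[
F(w,\zeta):=g_Z\bigl(\phi(w),\,\zeta\, e^{-H(w)}\bigr),
\]
and for each real $s\in[0,1]$ the disc $\psi_s(w):=F(w,s)$, which continuously interpolates between $\psi_0=\phi$ and the disc $\psi_1$ whose value at $0$ is $g_Z(\phi(0),e^{-H(0)})$. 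The boundary inequality ensures $\psi_s(\partial D)\subset U$ for every $s\in[0,1]$.

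The main step — and the one where the Kontinuit\"atssatz is essential — is to show that the set $S:=\{s\in[0,1]:\psi_s(\overline D)\subset U\}$ is both open and closed in $[0,1]$. Openness follows from the compactness of $\psi_s(\overline D)$ inside $U$ together with the joint continuity of $F$ in $(w,s)$, which also guarantees that $\psi_{s'}$ remains defined on $\overline D$ for $s'$ close to $s$ (the required values of $\zeta\,e^{-H(w)}$ stay inside the open set $\Omega_{\phi(w)}$). For closedness, if $s_n\in S$ with $s_n\to s^*$, then $\bigcup_n\psi_{s_n}(\partial D)$ is relatively compact in $U$ by continuity, so the Kontinuit\"atssatz forces $\bigcup_n\psi_{s_n}(\overline D)$ to be relatively compact in $U$, and the limit $\psi_{s^*}$ therefore maps $\overline D$ into $U$. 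Since $0\in S$, connectedness gives $S=[0,1]$, so $\psi_1(0)=g_Z(\phi(0),e^{-H(0)})\in U$, whence $e^{-H(0)}\in\Omega_{\phi(0)}$ and $d_Z(\phi(0))\ge|e^{-H(0)}|=e^{-\Re H(0)}$.

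The expected main obstacle is the closedness step: one must confirm that the limit disc $\psi_{s^*}$ is actually defined on the whole of $\overline D$ and not merely on the interior, and that the relative compactness supplied by the Kontinuit\"atssatz does not allow the limit to touch $\partial U$ in the interior of $D$. This is exactly where the boundary control $|e^{-H(w)}|\le d_Z(\phi(w))$ on $\partial D$ plus the continuity principle combine: the image of the distinguished boundary stays uniformly inside a compact subset of $U$, preventing any escape through the interior at the limit. The remaining verifications (holomorphy of $F$ in $(w,\zeta)$, approximation of general test functions by $\Re H$ for polynomials $H$, and upgrading the sub-mean-value inequality to plurisubharmonicity on $U$) are routine.
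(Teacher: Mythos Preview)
Your argument is correct and is, at bottom, the same Hartogs--disc continuity method the paper uses; the difference is organizational. Instead of running the open--closed argument on the parameter $s$ directly in $U$, the paper works in the auxiliary domain $U_Z:=\{(z,\zeta):\zeta\in\Omega_z\}\subset M\times\C$ and reduces everything to showing that $U_Z$ itself satisfies the Kontinuit\"atssatz: once that is known, the plurisubharmonicity of $-\log d_Z$ is simply the classical fact that $-\log$ of the fiber radius in the $\zeta$--direction is psh on any domain satisfying the continuity principle. The verification that $U_Z$ inherits the Kontinuit\"atssatz is a one--liner: given $f_j:\overline D\to U_Z$ with $\bigcup_j f_j(\partial D)\Subset U_Z$, project to $U$ via $\pi$ and apply the hypothesis on $U$ to get compactness in the $z$--variable, while compactness in $\zeta$ follows from standard continuous dependence of ODE solutions. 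This packaging absorbs both your upper--semicontinuity check and your $S=[0,1]$ bookkeeping into known statements, cleanly separating the geometric input (the flow $g_Z$) from the complex--analytic one (Hartogs radius on a domain with the continuity property). Your direct version is equivalent but longer; the paper's route is worth knowing because it makes transparent that nothing beyond the classical Hartogs lemma is being used.
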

\begin{proof}
We  observe that if  the  domain $U_Z$ satisfies  the Kontinuit\"atssatz, it will follow that  $-\log$ distance
to the complement  in the  $\zeta$-direction is psh.
When  $Z$ is  transverse  to  the  boundary  at some point,  the function is  not  identically  $-\infty.$

Let $f_j:\  \overline{D}\to U_Z.$ Assume    $\bigcup_j  f_j(\partial D)\Subset U_Z.$
Let $\pi:\   U_Z\mapsto U$ be the projection.  Since $U$ satisfies  the Kontinuit\"atssatz, 
$\bigcup_j  \pi\circ f_j(\overline{ D})\Subset U.$ That   we   have also the compactness  in the $\zeta$-direction  follows from    the  standard results  on solutions of  vector fields.   
\end{proof}
\proof [End of the  proof of Theorem \ref{T:5.1}]
When $U$ is  relatively  compact, we just need  finitely many vector fields  $\mathcal V$, such that
for every point $p\in \partial U,$ there is  $Z\in\mathcal V,$ transverse  to $\partial U$ at $p.$
Then the function   $\sup\{-\log d_Z(z):\ Z\in\mathcal V\}$ is a continuous psh  exhaustion. When $\overline{U}$
is  not compact,  the construction can be  easily  adapted, to get the continuity. Indeed, we need only
finitely many vector fields on each compact of $\overline{U}.$
\endproof

\begin{remark}\label{R:5.3}
\rm
When $M$ is   infinitesimally  homogeneous, the hypothesis is  always satisfied. Otherwise, one  should observe that  if it is    satisfied for $U,$ it holds also  for domains  close enough  
to $U,$ in the $\mathcal C^1$-topology.
\end{remark}
Let $v$  be  a continuous   subharmonic  function  in the unit disc $D.$ We  will say that  $v$ is
strictly subharmonic   at $0$ if the Laplacean $\Delta  v>0$ in a neighborhood of $0.$ This is  equivalent to the  fact that   small $\mathcal C^2$ pertubations of $v$  in a neighborhood of $0,$ are still  subharmonic. For a  continuous  psh  function  $u$ in $U,$ we  will write  $\langle i\ddbar u(z), it_z\wedge \bar{t}_z \rangle=0$ 
iff $u\circ f$ is not  strictly  subharmonic  at $0,$ for a holomorphic map $f:\ D\to  U,$  $f(0)=z,$ $f'(0)=t_z.$
Let $\mathcal P(U)$  denote the  continuous psh functions on $U.$ For a compact set $K\Subset U,$ let $\mathcal P(K) $ denote the cone of  psh  continuous  near $K.$ 
 We define
$$\mathcal N_z(K):=\left\lbrace t_z:\ t_z\in T^{1,0}(U),\  \langle i\ddbar u(z),  it_z\wedge \bar{t}_z \rangle=0\quad\text {for u} \in \mathcal P(K) \right\rbrace. $$
Denote by  $\mathcal N(K):= \bigcup_{z\in U}\mathcal N_z(K).$   
Let  $\mathcal N_z:=\bigcup_{K}\mathcal N_z(K)$ as $K\nearrow U,$ and $\mathcal N:=\bigcup \mathcal N_z.$

\begin{proposition}\label{P:5.4}
Suppose there is $Z\in\mathcal V,$ with $Z(z_0)=t_{z_0}\in\mathcal N_{z_0}.$ Then the orbit of $Z,$ starting  at $z_0$
is  contained in $\mathcal N.$ If moreover  $\partial U\pitchfork\mathcal V,$ then  the orbit   is  complete and   is contained in  a level set  of any psh continuous function   $u\in\mathcal P(U).$ In particular in the level sets of the exhaustion  $\varphi.$

\end{proposition}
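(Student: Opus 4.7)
The three claims can be proved in sequence: (i) the orbit of $Z$ through $z_0$ lies in $\mathcal{N}$; (ii) under the transversality hypothesis $\partial U\pitchfork\mathcal{V}$, the natural parameter domain $\Omega_{z_0}\subset\C$ of the orbit equals $\C$; (iii) every $u\in\mathcal{P}(U)$ is constant on the orbit.

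For (i), fix $u\in\mathcal{P}(U)$, set $\phi(\zeta):=g_Z(z_0,\zeta)$ on $\Omega_{z_0}$, and let $v:=u\circ\phi$, which is subharmonic. The key observation is that ``$v$ is strictly subharmonic at $\zeta_1$'' is an \emph{open} condition in $\zeta_1$: by the characterization recalled just before the definition of $\mathcal{N}_z$, it amounts to $v-\epsilon|\zeta-\zeta_1|^2$ being subharmonic on a neighborhood of $\zeta_1$ for some $\epsilon>0$, a property that persists near $\zeta_1$. Hence the set of points of strict subharmonicity of $v$ is open in $\Omega_{z_0}$ and its complement closed. The hypothesis $Z(z_0)\in\mathcal{N}_{z_0}$ puts $0$ in this complement, and connectedness of $\Omega_{z_0}$ forces the open set to be empty. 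For an arbitrary $\zeta_1\in\Omega_{z_0}$, the orbit through $\phi(\zeta_1)$ is the translate $w\mapsto g_Z(\phi(\zeta_1),w)=\phi(w+\zeta_1)$, so the pullback of $u$ along it is $v(\cdot+\zeta_1)$, which is not strictly subharmonic at $0$ because $v$ is not strictly subharmonic at $\zeta_1$. Hence $Z(\phi(\zeta_1))\in\mathcal{N}_{\phi(\zeta_1)}$.

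For (ii), suppose $\Omega_{z_0}\neq\C$. Then the orbit leaves $U$, so $U$ is not invariant under the flow of $Z$, and Lemma~\ref{L:5.2} gives $-\log d_Z\in\mathcal{P}(U)$. By the group law of the flow,
\[
-\log d_Z(\phi(\zeta))=-\log\mathrm{dist}(\zeta,\partial\Omega_{z_0}).
\]
Since $\partial\Omega_{z_0}\neq\varnothing$, the right-hand side is strictly subharmonic near $\partial\Omega_{z_0}$ (its Laplacian behaves like $1/\mathrm{dist}^2$ there), contradicting (i) applied to $u=-\log d_Z$. Hence $\Omega_{z_0}=\C$.

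For (iii), given $u\in\mathcal{P}(U)$, set $v:=u\circ\phi$, subharmonic on $\C$. The strategy is first to show $\phi(\C)\Subset U$, after which $v$ is bounded and the classical Liouville theorem for subharmonic functions on $\C$ forces $v$ constant. Relative compactness comes from applying (i) to the exhaustion $\varphi$ from Theorem~\ref{T:5.1}: $\varphi\circ\phi$ is subharmonic on $\C$, bounded below, and nowhere strictly subharmonic. One then argues that $\varphi\circ\phi$ is harmonic in the distributional sense on $\C$, so the one-sided Liouville theorem for harmonic functions makes it constant and places $\phi(\C)$ in a compact sublevel set of $\varphi$. The main obstacle is precisely this last implication: ``nowhere strictly subharmonic'' does not by itself yield distributional harmonicity for a merely continuous psh exhaustion, since the Laplacian can still carry singular parts. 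A natural route is to approximate $\varphi$ by smooth psh functions $\varphi_n$ (Richberg-type smoothing), for which on the smooth level the null Levi-form condition along $Z$ from (i) literally gives $\Delta(\varphi_n\circ\phi)=0$ pointwise, making each $\varphi_n\circ\phi$ harmonic and bounded below, hence constant; a limiting argument then transfers the conclusion to $\varphi\circ\phi$.
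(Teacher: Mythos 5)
Your step (i) has a genuine gap. For a \emph{fixed} $u\in\mathcal P(U)$ you only know that $v=u\circ\phi$ is not strictly subharmonic at $0$; the set where $v$ is strictly subharmonic is indeed open, but its complement has no reason to be open, so connectedness of $\Omega_{z_0}$ does not force the open set to be empty (consider $v(\zeta)=\max(|\zeta|^2-1,0)$: subharmonic, not strictly subharmonic at $0$, strictly subharmonic on $|\zeta|>1$). The hypothesis $Z(z_0)=t_{z_0}\in\mathcal N_{z_0}$ is a statement about \emph{all} continuous psh test functions at the single point $z_0$, and the propagation along the orbit must exploit this: given $\zeta_1$ and $u$, the pullback $u\circ g_Z(\cdot,\zeta_1)$ is again continuous psh near $z_0$ (the flow is a local biholomorphism), and applying the hypothesis at $z_0$ to this new competitor, together with the fact that the flow preserves $Z$, yields that $v$ is not strictly subharmonic at $\zeta_1$. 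This is exactly the paper's argument; your translate remark only rephrases the desired conclusion, it does not produce the new test function at $z_0$.

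Steps (ii) and (iii) also rest on claims that fail or are circular. In (ii), $-\log\dist(\cdot,\partial\Omega_{z_0})$ need not be strictly subharmonic near $\partial\Omega_{z_0}$ (if $\C\setminus\Omega_{z_0}$ is, say, a single point it is harmonic), so no contradiction with (i) is guaranteed. In (iii) you identify the real obstacle yourself, but the proposed repair does not work: Richberg's theorem smooths \emph{strictly} psh functions, and merely psh continuous functions on a manifold have no general smooth psh approximation; moreover the approximation would have to be defined near the whole orbit, whose relative compactness is precisely what is being proved. The paper's route avoids all of this: it approximates test functions by psh functions smooth along the orbits of $\mathcal V$ (convolution with the flows, available by infinitesimal homogeneity), and applies the vanishing of the Levi form along $Z$ both to $u$ and to $\exp(u)$; since $i\ddbar e^u=e^u\bigl(i\ddbar u+i\partial u\wedge\dbar u\bigr)$, this forces $\langle\dbar u,Z\rangle=0$ along the orbit, hence $u$ is constant on it. Constancy of the exhaustion $\varphi$ then traps the orbit in a compact level set, and completeness of the flow follows from that, with no need for the $-\log d_Z$ device or any Liouville-type theorem on $\C$.
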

\proof
Let $g$ denote the complex flow of $Z.$ Then for $u\in \mathcal  P(K),$ $u\circ g(z_0,\zeta)$ is also in  $\mathcal P(K), $ if $\zeta$ is  small  enough. Since  $g$ is  a local biholomorphism,  $g(z_0,\zeta)(t_{z_0})\in\mathcal 
N_{g(z_0,\zeta)}.$

We can approximate functions in  $\mathcal P(K) $ by  functions in  $\mathcal P(K),$ smooth along
the orbits of $\mathcal V.$ Let $B$ be a neighborhood of $0$ in $\mathcal V$ and let $D$ denote the unit disc in $\C.$  Let  $\rho$  be an approximation of the identity in $B\times D.$ It suffices to consider the approximation
 $$\langle \rho(Z, \zeta), u (g_Z(z, \zeta) \rangle.$$
We can use functions smooth on orbits. If $Z\in\mathcal N,$ we get that  $\langle  \dbar u,Z\rangle=0.$
Indeed, we can apply the definition to $exp(u).$ 
So $u$ is constant along the orbit of $Z.$
Since $\varphi$  is  an exhaustion, the  orbit  is  contained in  a  compact level set  of $\varphi$ and hence  we have  a  holomorphic image of $\C$  in that level set.
\endproof

\begin{theorem}\label{T:5.6}  Suppose   $M$ is  infinitesimally homogeneous. Let  $U\subset M$
satisfy the Kontinuit\"atssatz. There is  an integer   $0\leq  d<n,$   and   a foliation $\Fc,$
with leaves  of dimension $d$ on the level sets of   the  exhaustion $\varphi.$ If  $d=0,$ then $U$ is  Stein.
If $d\geq 1,$  then $(i\ddbar \varphi)^{n+d-1}=0$   and  there is   a  positive closed   Levi  current  $T_t,$  of bidimension $(d,d)$  and  mass one, on each  level set  $(\varphi=t),$ for $t\geq   t_0.$ 

 When $U\Subset M$
and $U$ is  non Stein  there is  a  positive closed Levi  current  $S$  of bidimension $(d,d)$ on  $\partial U.$ 
\end{theorem}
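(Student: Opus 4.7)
The plan is to extract a holomorphic foliation $\Fc$ of $U$ from the null directions of the Levi forms of the family $\mathcal P(U)$ of continuous psh functions, exploit infinitesimal homogeneity to integrate this distribution, and then read off the remaining assertions. As a first step I would verify that at each $z\in U$ the set $\mathcal N_z$ is a complex linear subspace of $T^{(1,0)}_z(M)$: given $t_1,t_2\in \mathcal N_z$ and any smooth $u\in \mathcal P(U)$, the Cauchy--Schwarz inequality for the non-negative Hermitian form $\tau\mapsto \langle i\ddbar u(z), i\tau\wedge \bar \tau\rangle$ forces the off-diagonal entries to vanish on $t_1,t_2$; the continuous case then follows via the $\mathcal V$-flow averaging already used in the proof of Proposition \ref{P:5.4}.

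Next I would show that $z\mapsto \dim_\C \mathcal N_z$ is constant, equal to some $d$, on each level set $\{\varphi=t\}$ for $t$ beyond some $t_0$, and that $\mathcal N$ is involutive. Infinitesimal homogeneity writes every vector of $\mathcal N_z$ as $Z(z)$ with $Z\in \mathcal V$, and Proposition \ref{P:5.4} says the full orbit of $Z$ then stays tangent to $\mathcal N$ and is trapped inside the level set of $\varphi$ through $z$. This orbit-invariance lets one transport the dimension along level sets after excising a possibly lower-dimensional locus of jumps, and the Jacobi identity applied to $\mathcal V$-fields tangent to $\mathcal N$ yields involutivity. The complex Frobenius theorem then delivers a $d$-dimensional holomorphic foliation $\Fc$ whose leaves are contained in level sets of $\varphi$, hence of every $u\in \mathcal P(U)$, and therefore have compact closure in $U$. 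Since the Levi form of $\varphi$ vanishes on the $d$-dimensional subspace $\mathcal N_z$ at every $z$, its pointwise rank is at most $n-d$, which forces the announced wedge-power of $i\ddbar\varphi$ to vanish.

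If $d=0$ then $\mathcal N_z=\{0\}$ everywhere, and the Hahn--Banach separation argument of the proof of Proposition \ref{P:2.1} yields, near every compact $K\Subset U$, a smooth strictly psh function; combining these with the exhaustion $\varphi$ exactly as in the proof of Theorem \ref{T:4.3} produces a continuous strictly psh exhaustion of $U$, which is therefore Stein by Grauert's theorem. If $d\geq 1$, I would construct $T_t$ for $t\geq t_0$ by choosing a holonomy-invariant positive transverse probability measure for $\Fc|_{\{\varphi=t\}}$ (existence via a Krylov--Bogolyubov argument on the transverse pseudogroup, since the leaves have compact closure inside the compact level set) and integrating leafwise volumes against it. The resulting positive closed current of bidimension $(d,d)$ has compact support in $\{\varphi=t\}$, mass one after normalization, and is a Levi current because every $u\in \mathcal P(U)$ is constant on each leaf, hence on $\supp T_t$.

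Finally, when $U\Subset M$ is non-Stein, I would pass to a weak limit $S$ of $T_{t_n}$ along $t_n\nearrow \sup\varphi$: the supports cluster on $\partial U$ while the masses stay equal to $1$, so any limit $S$ is a non-zero positive closed current of bidimension $(d,d)$ supported on $\partial U$. The hardest step is the rank-constancy of Step 2: controlling possible jumps in $\dim_\C \mathcal N_z$ so that $\mathcal N$ genuinely integrates to a single-dimension foliation on the relevant open subset. Infinitesimal homogeneity is the essential input --- it lets one spread pointwise information along $\mathcal V$-orbits --- but a careful excision of an exceptional analytic set, together with verification that it misses $\{\varphi=t\}$ for $t\geq t_0$, is likely to be needed.
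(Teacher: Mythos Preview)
Your overall architecture coincides with the paper's: build the null-distribution $\mathcal N$, use infinitesimal homogeneity together with Proposition~\ref{P:5.4} to spread it along $\mathcal V$-orbits, use the Jacobi identity for involutivity (this is exactly Lemma~\ref{L:5.7}), apply Frobenius, treat $d=0$ by the Hahn--Banach and patching argument of Theorem~\ref{T:4.3}, and obtain $S$ on $\partial U$ as a cluster point of the $T_t$. The rank-constancy is handled in the paper without the excision you anticipate: one works with the filtered family $\mathcal N_z(K)$; as $K\nearrow U$ the dimension stabilises at each $z$, strict plurisubharmonicity being an open condition gives one semicontinuity, and the local flow of $\mathcal V$ (which conjugates $\mathcal P(K)$ near one point to $\mathcal P(K')$ near another) gives the other. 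No exceptional set needs to be removed, and the bundle $\mathcal N$ lives on all of $U$, not just on high level sets.

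Where you diverge substantively is in the construction of the closed currents $T_t$, and here your route has a gap. A Krylov--Bogolyubov argument produces invariant probability measures for a continuous action of an amenable group on a compact space; it does \emph{not} apply directly to the holonomy pseudogroup of a foliation, and the existence of a holonomy-invariant transverse measure (equivalently, a closed foliated cycle in Sullivan's sense) is a genuine constraint on a foliation, not a consequence of compactness of leaf closures. The paper sidesteps this entirely with an explicit construction: since every continuous psh $u$ is constant on the $d$-dimensional leaves, $(i\ddbar u)^{n-d+1}=0$; applying this to $e^{\varphi}$ and expanding gives $i\partial\varphi\wedge\bar\partial\varphi\wedge(i\ddbar\varphi)^{n-d}=0$, hence $\partial\varphi\wedge(i\ddbar\varphi)^{n-d}=0$, and therefore $\chi(\varphi)\,(i\ddbar\varphi)^{n-d}$ is a \emph{closed} positive current of bidimension $(d,d)$ for every nonnegative $\chi$. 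Choosing $\chi_j$ concentrating on $\{\varphi=t\}$ and normalising the mass produces $T_t$ directly. This is both simpler and more robust than an appeal to abstract ergodic theory, and it makes the Levi-current property immediate since $(i\ddbar\varphi)^{n-d}$ is by construction directed by $\mathcal N$.
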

\begin{lemma}\label{L:5.7}
 The  bundle $\mathcal N,$ is  of rank  $d,$ with $0\leq  d<n.$ The   sections  are stable  under Lie bracket. 
\end{lemma}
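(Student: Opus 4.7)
The plan is to establish three facts in sequence: (i) each $\mathcal N_z$ is a $\C$-linear subspace of $T^{1,0}_z M$; (ii) $\dim \mathcal N_z$ is constant on $U$ and strictly less than $n$; (iii) holomorphic sections of $\mathcal N$ are closed under the Lie bracket. Point (i) is immediate, since $\mathcal N_z = \bigcap_{u \in \mathcal P(U)} \ker \mathcal L u(z)$ is the intersection of kernels of positive semidefinite Hermitian forms, and each such kernel is a complex subspace.

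For (ii), $\dim \mathcal N_z$ is upper semicontinuous in $z$ as a kernel dimension of a family of Hermitian forms, so $U_0 := \{z : \dim \mathcal N_z = d\}$, with $d$ the minimum, is open. To show $U_0 = U$ I would invoke infinitesimal homogeneity: for $z_0 \in U_0$ and nearby $z_1 \in U$ there is a local biholomorphism $\phi$ between neighborhoods in $U$ with $\phi(z_0) = z_1$, obtained by composing flows of vector fields in $\mathcal V$. Although $\phi$ does not preserve the family of global psh functions on $U$, the averaging argument already used in Proposition \ref{P:5.4} --- convolving $u \in \mathcal P(U)$ against a bump supported along a small piece of $\mathcal V$-orbit --- produces a subfamily of psh functions whose common Levi-kernel is still $\mathcal N_z$ and which transforms compatibly under $\phi$. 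This gives $\dim \mathcal N_{z_1} = \dim \mathcal N_{z_0} = d$, so $U_0$ is also closed, hence equals $U$. For the strict bound $d < n$: if $d = n$ then every $u \in \mathcal P(U)$ would be pluriharmonic; in particular the continuous psh exhaustion $\varphi$ from Theorem \ref{T:5.1} is pluriharmonic, so $-\varphi$ is also psh, and the maximum principle on the compact sublevel set $\{\varphi \leq c\}$ forces $\varphi \equiv c$ there, contradicting properness of $\varphi$.

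For (iii), take holomorphic sections $X, Y$ of $\mathcal N$ realized by vector fields in $\mathcal V$ (using that $\mathcal V$ generates $T^{1,0} M$ pointwise). Proposition \ref{P:5.4} implies that for every $u \in \mathcal P(U)$, the complex flow of $X$ preserves $u$, and likewise for $Y$. Consequently the commutator flow $\phi^X_s \phi^Y_t \phi^X_{-s} \phi^Y_{-t}$ preserves $u$, and so does the flow of $[X, Y]$, being a limit of such commutators. Then $\zeta \mapsto u \circ g_{[X, Y]}(z, \zeta)$ is constant near $\zeta = 0$, and the second derivative at $\zeta = 0$ yields $\langle i \ddbar u(z), i[X, Y] \wedge \overline{[X, Y]} \rangle = 0$, proving $[X, Y](z) \in \mathcal N_z$. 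Arbitrary holomorphic sections reduce to this case by pointwise linear combinations, using the derivation property of the bracket.

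The main obstacle is the constant-rank step (ii). The distribution $\mathcal N$ is defined through \emph{global} continuous psh functions on $U$, whereas the local biholomorphisms supplied by $\mathcal V$-flows do not a priori preserve this global family. Carefully justifying that the $\mathcal V$-averaging construction produces enough psh functions whose common Levi-kernel still equals $\mathcal N_z$ --- so that the flows really do transport the distribution --- is the delicate technical point of the argument.
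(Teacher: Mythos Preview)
Your argument for bracket stability (iii) is genuinely different from the paper's and worth comparing. The paper proceeds algebraically: working with smooth $u\in\mathcal P(K)$ (available by Hirschowitz's approximation), it uses that $X\in\mathcal N(K)$ forces $\langle\ddbar u,X\wedge\overline Z\rangle=0$ for \emph{all} $Z\in\mathcal V$ (Cauchy--Schwarz for the positive form $i\ddbar u$), then writes $\langle\ddbar u,[X,Y]\wedge\overline Z\rangle=-\langle\partial u,[[X,Y],\overline Z]\rangle$ and expands the double bracket by the Jacobi identity into terms that vanish by the same mechanism. Your commutator-flow argument is more geometric and perfectly valid in spirit; it trades the algebraic identity for the analytic fact that a limit of $u$-preserving diffeomorphisms still preserves $u$.

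There is, however, a gap that affects both (i) and (iii). You assert $\mathcal N_z=\bigcap_{u\in\mathcal P(U)}\ker\mathcal L u(z)$, i.e.\ the common kernel for \emph{global} continuous psh functions on $U$. But the paper defines $\mathcal N_z=\bigcup_{K}\mathcal N_z(K)$, where $\mathcal N_z(K)$ is the common kernel for functions in $\mathcal P(K)$, psh \emph{near the compact} $K$. Since $\mathcal P(U)$ restricts into every $\mathcal P(K)$, one only gets the inclusion $\mathcal N_z^{\text{paper}}\subset\mathcal N_z^{\text{your}}$; the reverse is not automatic, because a direction could be Levi-null for every global psh function yet detected by some $u$ psh only on a large relatively compact subset. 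Your bracket argument, which relies on Proposition~\ref{P:5.4} to conclude that global $u\in\mathcal P(U)$ are constant along flows of $X,Y$, therefore only lands $[X,Y]$ in your larger $\mathcal N_z$, not a priori in the paper's. The paper avoids this by running the entire argument inside a fixed $\mathcal N(K)$ with local psh test functions, and then using that $\dim\mathcal N_z(K)$ is nondecreasing in $K$ (hence stabilizes) together with upper semicontinuity in $z$ to get constant rank. To repair your route you would need either to prove the two definitions coincide, or to rerun the commutator-flow argument with $u\in\mathcal P(K)$ and small flow times so as to stay inside the domain of $u$.
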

\proof
Since $M$ is  infinitesimally  homogeneous, for each  non-zero vector $t_z\in\mathcal N_z$  there is a   holomorphic   vector field $Z$ in that  direction. So we can apply   Proposition \ref{P:5.4}.  Moreover,  the image  by the  flow $g$ of  $\mathcal N$ is  contained in $\mathcal N.$  So the   support of $\mathcal N$ is $U.$  We show that  $\mathcal N$ is  stable  under   Lie bracket. 

In an  infinitely  homogeneous manifold,  continuous psh functions near $K$ are approximable by smooth ones \cite {H2} . So to analyze $\mathcal N_K,$
we can use smooth psh  functions, near a compact set $K.$

Let $X,$ $Y$ and $Z$  be $(1,0)$-holomorphic vector fields.  If
$X,Y\in\mathcal N(K),$ then  $\langle  \dbar u,X\rangle=\langle \dbar u, Y\rangle=0$ and 
$\langle \ddbar u, X\wedge \overline{Z} \rangle =0$ for every   $Z\in\mathcal V,$ and for any smooth psh function in a neighborhood of $K.$ We also have for type reasons:
$$
\langle \ddbar u, [X,Y]\wedge\overline{Z}\rangle=-\langle \partial u,  [[X,Y],\overline{Z}] \rangle.\quad\
$$
Jacobi's identity  gives that 
$$
 [[X,Y],\overline{Z}] = [X,[Y,\overline{Z}]] -[Y,[X,\overline{Z}]].
$$
So if  $X,Y\in\mathcal N(K),$ i.e.  $\langle  \ddbar u, X\wedge\overline{X}\rangle=\langle\ddbar u, Y\wedge\overline{Y}\rangle=0,$ then 

$$\langle  \ddbar u, X\wedge\overline{Z}\rangle=\langle\ddbar u, Y\wedge\overline{Z}\rangle=0$$
for every $Z\in\mathcal V.$ It follows that $[X,Y]\in \mathcal N(K).$ t is clear that the rank $d_z$ of $\mathcal N_z(K)$ increases as $K$ increases. Hence it stabilizes.
Observe tha $d_z$ cannot drop since strict plurisubharmonicity is an open condition.
So  $\mathcal N$ is a  bundle of dimension $d,$ stable  under   Lie bracket.
 
\endproof

\proof[Proof of  Theorem  \ref{T:5.6}]
To the bundle   $\mathcal N$, we associate  a  foliation $\mathcal F$. Moreover for every $Z \in \mathcal N$ we have that 
$\langle \partial u, Z \rangle =0. $ Hence the leaves are contained in the level sets of functions in 
$\mathcal P(U).$ In particular the  leaves are contained in $(\varphi=\const).$ 

Since  there is an  exhaustion  function, necessarily  $d<n.$ When $d=0,$ it is  easy to construct a  strictly  psh exhaustion function

Any continuous psh function $u,$  is constant on the leaves, hence  $(i\ddbar u)^{n-d+1}=0.$  In particular, $\varphi$  is  constant  on leaves, hence $(i\ddbar \varphi)^{n-d+1}=0.$ We can replace
$\varphi$ by  $\exp(\varphi),$  so it follows that   $i\partial \varphi\wedge\dbar \varphi\wedge (i\ddbar\varphi)^{n-d}=0.$  Consequently, for every nonnegative  function  $\chi,$ the current
$\chi(\varphi)(i\ddbar\varphi)^{n-d}$ is  closed.    Let  $c$ denote it's mass.
We can  construct   a  positive closed  current   $T_{t_0}$ of  mass $1$  on $(\varphi=t_0)$
as  a limit   of ${1\over c_j} \chi_j(\varphi)(i\ddbar\varphi)^{n-d}.$
It follows also that   when $d>0$  there 
is a positive  closed current $S$  of bidimension $(d,d)$ and mass $1$ on $\partial U.$  It suffices to take a cluster point of the currents $T_t.$

\endproof
\begin{remark}\label{R:5.8} \rm
   If $M$ is K\"ahler, then
 $S$ is nef, i.e., it is  a  limit of smooth  strictly positive closed forms.
 \end{remark}
 \begin{corollary}
 Let $(M,\omega)$ be an infinitesimally  homogeneous   compact K\"ahler manifold. Let $U\subset M$
 be  a  domain  satisfying the  Kontinuit\"atssatz. If  $H^2_{\text{dR}}(U)=0,$ then  $U$ is  Stein.
\end{corollary}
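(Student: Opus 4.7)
The plan is to argue by contraposition using Theorem~\ref{T:5.6}: we show that if $U$ is not Stein then $H^2_{\mathrm{dR}}(U)\neq 0$. First, Theorem~\ref{T:5.1} supplies a continuous psh exhaustion $\varphi$ on $U$ (the hypothesis $\partial U\pitchfork\mathcal V$ is automatic when $M$ is infinitesimally homogeneous, cf.\ Remark~\ref{R:5.3}). Theorem~\ref{T:5.6} then yields an integer $0\leq d<n$ such that $d=0$ already forces $U$ to be Stein, so it suffices to rule out $d\geq 1$. Assume for contradiction that $d\geq 1$ and fix $t\geq t_0$; the same theorem produces a positive closed current $T_t$ of bidimension $(d,d)$ and mass one, supported on the compact level set $(\varphi=t)\subset U$. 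In particular $T_t$ has compact support in $U$, satisfies $dT_t=0$, and
\[
\int_U T_t\wedge \omega^d=1.
\]

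Next, bring in the cohomological hypothesis. Since $\omega|_U$ is a smooth closed real $2$-form on $U$ and $[\omega|_U]=0$ in $H^2_{\mathrm{dR}}(U)$, there is a smooth real $1$-form $\eta$ on $U$ with $\omega=d\eta$ on $U$. Because $d\omega=0$, a direct computation gives
\[
\omega^d=d\eta\wedge \omega^{d-1}=d\bigl(\eta\wedge \omega^{d-1}\bigr),
\]
so $\omega^d$ is exact on $U$. Because $T_t$ is $d$-closed and has compact support in $U$, Stokes' formula for currents yields
\[
\int_U T_t\wedge \omega^d=\bigl\langle T_t,\, d(\eta\wedge \omega^{d-1})\bigr\rangle=\pm\bigl\langle dT_t,\, \eta\wedge \omega^{d-1}\bigr\rangle=0,
\]
contradicting the mass identity. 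Hence $d=0$ and $U$ is Stein.

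The main point to verify is that $T_t$ is genuinely $d$-closed rather than merely $\ddbar$-closed, because the Stokes step above fails for $\ddbar$-closed currents (one would only kill exact forms of the very special shape $d d^c\beta$). This is asserted in the statement of Theorem~\ref{T:5.6}; it is established in its proof through the identities $i\partial\varphi\wedge\dbar\varphi\wedge(i\ddbar\varphi)^{n-d}=0$ and $(i\ddbar\varphi)^{n-d+1}=0$, which together imply that the approximants $c_j^{-1}\chi_j(\varphi)(i\ddbar\varphi)^{n-d}$ are honestly $d$-closed, and the property passes to the limit $T_t$. With $d$-closedness in hand, the cohomological step is entirely formal: the Kähler form provides a canonical $(d,d)$-cohomology class that \emph{any} compactly supported positive closed current of bidimension $(d,d)$ must detect as soon as $d\geq 1$.
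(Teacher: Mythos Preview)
Your proof is correct and follows essentially the same route as the paper: use Theorem~\ref{T:5.6} to produce a compactly supported positive \emph{closed} current when $d\geq 1$, write $\omega=d\eta$ from $H^2_{\mathrm{dR}}(U)=0$, and obtain a contradiction by Stokes. The only cosmetic difference is that the paper phrases the pairing with a bidimension $(1,1)$ current and $\omega$, whereas you pair the bidimension $(d,d)$ current $T_t$ directly with $\omega^d=d(\eta\wedge\omega^{d-1})$; these are equivalent, and your final paragraph correctly identifies why $d$-closedness (not merely $\ddbar$-closedness) is the crucial input from Theorem~\ref{T:5.6}.
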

\proof Let $T$ be a positive closed  current   of bidimension $(1,1)$ with compact support in $U.$
Let $\omega$ be a K\"ahler form. The cohomological hypothesis on the de Rham group, implies
that  we can write $\omega=d\alpha,$ with $\alpha$ smooth.
Then 
$$
\langle T,\omega\rangle=\langle T,d\alpha\rangle=-\langle dT,\alpha\rangle=0.
$$
So the dimension of the bundle $\mathcal N$ is $d=0$ and   hence $U$ is  Stein.
\endproof
\begin{remark}\label{R:5.9} \rm
Assume $U\Subset M,$ with $(M,\omega),$ compact  K\"ahler, not infinitesimally homogeneous. Suppose $H^2_{\text{dR}}(U)=0,$
and that $U$ admits a continuous psh exhaustion. If $U,$ is not Stein, then there are non-zero non-closed but $i\ddbar$-closed currents with compact support in $U.$ This follows from the argument in the previous Corollary  and  from Theorem \ref{T:main_3}.
  \end{remark}

\begin{corollary}\label{C:5.10}
Supppose $(M,U)$ are as in  Theorem \ref{T:5.6}.  Assume that  $M$ is a  compact K\"ahler manifold.  
Suppose also that $U$ does not contain a compact variety of dimension $d.$
Let  $T_t$ be the positive closed current  constructed in Theorem \ref{T:5.6}.
Then $\{ T_t\}^2=0.$
If $d=n-1,$ all  the   currents $T_t$  are  nef  and are in the  same cohomology class.    
\end{corollary}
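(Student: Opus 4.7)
The plan is to deduce (a) from the pointwise vanishing of a high enough power of $i\ddbar\varphi$, and (b) from the Hodge--Riemann bilinear relations on the compact K\"ahler $M$. For (a), I first smoothen the exhaustion $\varphi$ while preserving constancy on the leaves of $\Fc$; this is possible in the infinitesimally homogeneous setting by averaging over the complex flows of elements of $\mathcal{V}$, as already used after Lemma \ref{L:5.7}. For such a smooth $\varphi$, at each point the Levi form has kernel containing the $d$-dimensional leaf tangent space, so $i\ddbar\varphi$ has pointwise rank at most $n-d$; equivalently, $(i\ddbar\varphi)^{n-d+1}=0$ as a smooth form. Since $2(n-d)\ge n-d+1$ (as $n-d\ge 1$), this upgrades to $(i\ddbar\varphi)^{2(n-d)}=0$ pointwise. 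By the construction in Theorem \ref{T:5.6}, $T_t$ is a weak limit of smooth closed forms $\alpha_j:=c_j^{-1}\chi_j(\varphi)(i\ddbar\varphi)^{n-d}$, and
\[
\alpha_j\wedge\alpha_j=c_j^{-2}\chi_j(\varphi)^2(i\ddbar\varphi)^{2(n-d)}=0
\]
pointwise. Since the cup product is continuous on the finite-dimensional real cohomology $H^{*}(M,\R)$, and $\{\alpha_j\}\to\{T_t\}$ by weak convergence with bounded mass, we conclude $\{T_t\}^2=\lim_j\{\alpha_j\}^2=0$.

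For (b), nefness of $\{T_t\}$ when $d=n-1$ is immediate: each $\alpha_j$ is a smooth nonnegative closed $(1,1)$-form, so $\{\alpha_j\}$ lies in the nef cone of $H^{1,1}(M,\R)$, and the nef cone is closed. To show all $\{T_t\}$ coincide, I use disjointness of supports: for $s\ne t$ the currents $T_s$, $T_t$ are positive closed $(1,1)$-currents supported in the disjoint compact sets $\{\varphi=s\}$, $\{\varphi=t\}$; locally at least one of them vanishes, so $T_s\wedge T_t=0$ as a current and hence $\{T_s\}\cdot\{T_t\}=0$ in $H^{2,2}(M,\R)$. The hypothesis that $U$ contains no compact variety of dimension $d$ plays the role of ensuring that the family $\{T_t\}_{t>t_0}$ is not interrupted by a $T_t$ that happens to be a current of integration along an isolated compact leaf, whose cohomology class could a priori fall outside the pattern.

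Finally, I close (b) by invoking Hodge--Riemann. On the compact K\"ahler $M$, the form $Q(\alpha,\beta):=\int_M\alpha\wedge\beta\wedge\omega^{n-2}$ on $H^{1,1}(M,\R)$ has signature $(1,h^{1,1}-1)$: positive on the line $\R\{\omega\}$ and negative-definite on its $Q$-orthogonal complement. By the mass-one normalization, $Q(\{T_t\},\{\omega\})=\int T_t\wedge\omega^{n-1}=1$ for every $t$. Setting $\gamma:=\{T_t\}-\{T_s\}$, part (a) together with the disjoint-support vanishing gives $Q(\gamma,\gamma)=0$, and the normalization gives $Q(\gamma,\{\omega\})=0$; hence $\gamma$ is a $Q$-isotropic element of $\{\omega\}^\perp$, forcing $\gamma=0$, so $\{T_t\}=\{T_s\}$. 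The main obstacle I anticipate is the smoothing step in (a): one needs a smooth approximation of $\varphi$ for which the pointwise rank bound on $i\ddbar\varphi$ survives, and this requires the approximation to respect the foliation $\Fc$; modulo this point, (a) is essentially a tautology and the real content of the corollary is carried by the Hodge--Riemann argument in (b).
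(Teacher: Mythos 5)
Your argument is correct in substance but reaches $\{T_t\}^2=0$ by a genuinely different route than the paper. The paper does not compute with potentials at all: it observes that the $T_t$ are closed currents directed by the nonsingular foliation $\Fc$ and that the hypothesis that $U$ contains no compact $d$-dimensional variety makes them \emph{diffuse} foliated cycles, and then quotes Kaufmann's theorem \cite{K} on self-intersections of foliated cycles (invoked there under the restriction $2d\geq n$). You instead exploit the explicit construction of $T_t$ in Theorem \ref{T:5.6} as a normalized limit of $\chi_j(\varphi)(i\ddbar\varphi)^{n-d}$, together with leaf-constancy of $\varphi$, which kills $(i\ddbar\varphi)^{2(n-d)}$ and hence the squares of the approximating classes. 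Your route is more self-contained (no appeal to \cite{K}, no restriction $2d\geq n$) and in fact never uses the no-compact-variety hypothesis, whereas in the paper that hypothesis is precisely what guarantees diffuseness; your gloss on its role is therefore off the mark, though harmless for your argument. For $d=n-1$ the two proofs essentially coincide: nefness as in Remark \ref{R:5.8}, vanishing of $\{T_t\}\smile\{T_s\}$ from disjointness of supports, and the Hodge--Riemann signature argument; your mass normalization $\int T_t\wedge\omega^{n-1}=1$ lets you conclude equality of classes directly, where the paper concludes proportionality and then equality.

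Two points need tightening. First, as written your $\alpha_j$ are not smooth forms: $\varphi$ is only continuous, so $\chi_j(\varphi)(i\ddbar\varphi)^{n-d}$ is a Bedford--Taylor current and the identity $\{\alpha_j\}^2=\{\alpha_j\wedge\alpha_j\}$ is not directly available. The smoothing must be woven into a diagonal argument: approximate $\varphi$ uniformly on a large compact by smooth psh functions $\tilde\varphi_m$ (Hirschowitz's approximation \cite{H2}, quoted in the proof of Lemma \ref{L:5.7}), use Bedford--Taylor convergence $(i\ddbar\tilde\varphi_m)^{n-d}\to(i\ddbar\varphi)^{n-d}$ to get convergence of the normalizing masses and of the de Rham classes, and only then square the smooth approximants. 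Second, the leaf-constancy of the approximants does not come from the averaging over complex flows preserving $\Fc$ (it does not preserve it): it comes from the fact that any continuous psh function near a sufficiently large compact has vanishing Levi form along $\mathcal N$ once the rank has stabilized (Lemma \ref{L:5.7}), hence vanishing differential along $\mathcal N$ by the $\exp(u)$ trick of Proposition \ref{P:5.4}, so it is automatically constant on the leaves; this is exactly how the paper itself obtains $(i\ddbar u)^{n-d+1}=0$, and it also gives closedness of your $\alpha_j$. Finally, the step ``disjoint supports imply $\{T_s\}\smile\{T_t\}=0$'' deserves a word (each current has smooth, indeed vanishing, local potentials near the support of the other, so their wedge is well defined, vanishes, and represents the cup product); the paper asserts this with the same brevity, so it is not a gap relative to the paper's own standard of detail.
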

\proof
The  currents $T_t$  are directed  by a foliation without singularities and they are closed and diffuse.  Then a  result of Kaufman  \cite{K} states that  the cohomology class  $\{ T_t\}$ of $T_t$ satisfies
  $\{ T_t\}^2=0$ provided $2d\geq n.$

When $d=n-1,$ $\{ T_t\}^2=\{ T_{t'}\}^2=0.$ If  $t\not=t'$, since the supports of  $T_t$ and $T_t'$ are
disjoint  we get that  $\{ T_t\}\smile \{ T_{t'}\}=0.$  As we have seen, $\{ T_t\}$ and $\{ T_{t'}\}$ are nef, then  the Hodge-Riemann signature Theorem,
 implies that  $\{ T_t\}$ and $\{ T_{t'}\}$ are proportional. If the  two currents are on the  same level set,
 we use a  current  on another level set.
\endproof

\begin{remark}\label{R:5.11}\rm
Theorem  \ref{T:5.6} can be improved  as  follows.  Suppose $\mathcal V$ generates $T^{1,0}M$  at one  point   $z_0.$
Let $A$ be   the analytic set where  $\rank  \mathcal V(z)\leq n-1.$ Assume   $U\pitchfork\mathcal V.$
Then there is a continuous psh exhaustion $\varphi.$
There is  also, an integer $d,$ $0\leq d<n$ and a   foliation   with leaves  of dimension $d$ on each $(\varphi=t)\setminus A.$ In particular,  if $d=0$ and  $A$ is  Stein,  then $U$ is  Stein.  If $U$ is not Stein, then there is  a nontrivial  holomorphic  image of $\C,$ which is relatively compact  in $U.$ One shows if $d=0$ that   any positive $\ddbar$-closed  current has  no mass out of $A.$ So if  $A$ is  Stein, it
has  also no mass on $A.$  Hence, $U$ is  Stein by Theorem \ref{T:main_3}.

\end{remark}
\begin{remark}\rm Let $U\Subset M$ be a  pseudoconvex domain with smooth boundary  in  an infinitesimally
homogeneous manifold. Assume   it is  not Stein and  let $d$ be the  dimension of the  leaves of the  associated foliation $\mathcal F$. Then $\mathcal F$  extends  to a foliation on $\partial U,$ with leaves  of dimension  $d.$
This  is a  case  where   the  dimension  of the leaves does not change.
\end{remark}

\begin{corollary}\label{C:5.12}
Let $U\subset M$ be  as in Theorem \ref{T:5.6}. Assume  $\partial U$
is  smooth. Assume that   at  a point $p\in\partial U$  the  rank of the Levi form is maximal and  equal to $n-l.$ 
Then the dimension  $d$ of the foliation  satisfies, $ d\leq l-1.$  In particular, if $p$  is  a point   of strict pseudoconvexity, then $U$ is  Stein.
 \end{corollary}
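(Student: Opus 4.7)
The plan is to combine the foliation-extension statement in the remark immediately preceding the corollary with the elementary observation that a complex submanifold contained in $\partial U$ must have tangent directions lying in the Levi null space.

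First I would dispose of the case $d=0$: here the bound $d\leq l-1$ is automatic because the Levi form acts on the $(n-1)$-dimensional complex tangent space of $\partial U$, so its rank $n-l$ satisfies $n-l\leq n-1$, i.e.\ $l\geq 1$. Thus we may assume $U$ is not Stein, and by Theorem \ref{T:5.6} the foliation $\mathcal F$ has leaves of positive dimension $d\geq 1$. Invoking the remark just before the corollary, $\mathcal F$ extends to a foliation on $\partial U$ whose leaves still have dimension $d$; let $L$ denote the leaf through $p$.

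Next I would run the counting argument. Since $L$ is a $d$-dimensional complex submanifold of $\partial U$, we have $T^{(1,0)}_pL\subset T^{(1,0)}_p(\partial U)$. If $r$ is a defining function for $\partial U$, then $r|_L\equiv 0$, hence $(i\ddbar r)|_L=0$ at $p$; equivalently,
$$
\langle i\ddbar r(p),it\wedge\bar t\rangle=0\qquad\text{for every }t\in T^{(1,0)}_pL.
$$
So $T^{(1,0)}_pL$ is contained in the null space of the Levi form at $p$. Since $T^{(1,0)}_p(\partial U)$ has complex dimension $n-1$ and the Levi form there has rank $n-l$, that null space has complex dimension $(n-1)-(n-l)=l-1$, giving $d\leq l-1$.

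For the last assertion, strict pseudoconvexity at $p$ forces the Levi form to be positive definite, so $n-l=n-1$, i.e.\ $l=1$; hence $d\leq 0$, i.e.\ $d=0$, and Theorem \ref{T:5.6} yields that $U$ is Stein. The only nontrivial ingredient is the boundary extension of $\mathcal F$ with constant leaf dimension; once that is granted (from the preceding remark), the rest is pure linear algebra together with the fact that $r$ vanishes on every complex subvariety of $\partial U$.
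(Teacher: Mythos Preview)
Your argument is correct but follows a different route from the paper's. You invoke the preceding remark to extend $\mathcal F$ to $\partial U$, take the leaf $L$ through $p$, and then use the elementary fact that a complex submanifold of $\partial U$ has its tangent space inside the radical of the (semidefinite) Levi form; the dimension count $d\leq (n-1)-(n-l)=l-1$ is immediate. The paper instead works entirely in the interior: for the strictly pseudoconvex case it produces a continuous psh peak function $\psi$ near $p$ that is strictly psh there, which forces $\mathcal N_z=\{0\}$ nearby and hence $d=0$; for the general case it slices by an $(n-l+1)$--dimensional complex subspace $L_\alpha$ chosen so that $L_\alpha\cap\partial U$ is strictly pseudoconvex at $p$, reducing to the first case and concluding that the induced foliation on $L_\alpha$ has zero-dimensional leaves, whence $d\leq l-1$. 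Your approach is shorter and more geometric but leans on the boundary-extension statement (which the paper asserts without proof); the paper's argument is more self-contained, using only the definition of $\mathcal N$ and the existence of local strictly psh functions at strictly pseudoconvex points, and in particular does not need the foliation to extend across $\partial U$.
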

 \proof
 Suppose $p$ is  a point of strict pseudoconvexity i.e $l=1.$ Then there is  a continuous psh function $\psi$ in $U:$
 $\psi(p)=0,$  $\psi<0$ on   $B(p,r)\cap U$ and $\psi$   strictly psh near $p.$ Such a function
 implies that $d=0$ and  hence $U$ is  Stein. For the general case     we can  cut locally  near $p,$ by a subspace $L_\alpha$   of dimension $n-l+1$ such that $L_\alpha\cap\partial U$  is  strictly  pseudoconvex 
 at $p.$ Then   the foliation   induces on $L_\alpha,$ leaves of dimension $0.$ Hence, the dimension of the  original  foliation  satifies  $ d\leq l-1.$ 
\endproof
I thank Masanori Adachi for pointing out a slip in the previous formulation of the above statement.
\begin{remark}
\rm
When  $M$ is compact homogeneous and $U$  has  a  point of strict  pseudoconvexity, the  result is  due to Michel  \cite{M}.

\end{remark}

\section{Levi-problem for Pfaff systems}\label{S:Levi-Pfaff}


Let $(M,\omega)$  be  a complex hermitian  manifold. Fix  $\mathcal S=(\alpha_j)_{j\leq m}$ a Pfaff system, i.e.,
the  $\alpha_j$ are $(1,0)$-forms of class $\mathcal C^1$ on $M.$ Assume that for every
$z\in M,$  $\bigcap_{j\leq m} \ker \alpha_j(z)\not=\{0\}.$  We will say that  $M$ is $\mathcal S$-strongly
pseudoconvex if it admits  a smooth  exhaustion  $u$ such that outside of a compact set in $M,$
\begin{equation}\label{e:6.1}
\langle  i\ddbar u(z), it_z\wedge \bar{t}_z \rangle>0\quad\text{for}\quad  t_z\not=0,\quad \langle\alpha_j(z),t_z\rangle=0,\quad  1\leq j\leq m.  
\end{equation}
Let $U\Subset M$    be  a domain    with smooth  boundary and  defining function $r.$  We will  say that    $U$ is
$\mathcal S$-pseudoconvex  if

\begin{equation}\label{e:6.2}
\langle  i\ddbar u(z), it_z\wedge \bar{t}_z \rangle\geq 0\quad\text{when}\quad  \langle  \partial r(z), t_z\rangle=\langle\alpha_j(z),t_z\rangle=0,\quad  1\leq j\leq m.  
\end{equation}
  The  basic example of this  situation    is     when the  $(\alpha_j)$   are holomorphic  and the  system is  integrable. Then we get a notion of uniform pseudoconvexity on leaves.
  
  The  question we address is   assuming that  $U$ is $\mathcal S$-pseudoconvex in a complex manifold $M,$ under which  conditions is $U$ $\mathcal S$-strongly
pseudoconvex?

It is  natural to introduce  the cone  $\mathcal P_S$ of $\mathcal C^2$-smooth functions in $U$  such that 
$\langle i\ddbar v, it\wedge  \bar{t}\rangle\geq 0$  when $\langle \alpha_j,t\rangle=0$  for $1\leq j\leq m,$
see \cite{S1}.  Denote  by   $\overline{\mathcal P}_S$   the space of continuous  functions   which are  decreasing limits
of functions   in $\mathcal P_S$ 

We can extend  some of the results  from previous  sections to this  context. As observed in  \cite{S1},
the estimates \eqref{e:4.2} and \eqref{e:4.3} are valid for positive $\ddbar-$ closed currents directed by $\mathcal S,$ and for functions in $\overline{\mathcal P}_S.$ 
A  Levi current  $T$  for the  system $\mathcal S$ on $U$ is a positive current of bidimension $(1,1)$ such that
$$
i\ddbar T=0,\quad T\wedge \alpha_j=0,\ \ 1\leq j\leq m,  \quad   T\wedge i\ddbar u=0\quad \text{for every}\quad
u\in\mathcal P_S.
$$
A Levi current   for the  system $\mathcal S$  on $(r=0)$ satisfies
\begin{equation}\label{e:6.3}
i\ddbar T=0,\quad T\wedge i\ddbar r=0,\quad T\wedge \partial r=0,\quad  T\wedge \alpha_j=0,\ \ 1\leq j\leq m,\quad  \langle T,\omega\rangle=1.
\end{equation}
We just state  some extensions, leaving the proof to the reader.
\begin{theorem}\label{T:6.1}
Let $U\Subset M$ be  an  $\mathcal S$-pseudoconvex domain  with smooth  boundary. If there is  no Levi current for the  system   $\mathcal S$ on $\partial U,$ then  $U$ is  strongly $\mathcal S$- pseudoconvex. Moreover,   there is  a smooth function $v,$   such that for $A$ positive  large enough and $\eta>0$ small enough,
$\hat\rho:=-(-re^{-Av})^\eta$ satisfies $i\ddbar \hat\rho\gtrsim C\omega|\hat\rho|  $ on vectors such that
$\langle   \alpha_j(z),t_z\rangle=0,$  $1\leq j\leq  m.$
\end{theorem}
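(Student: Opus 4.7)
The plan is to mirror the two-step architecture of Proposition~\ref{P:2.1} and Theorem~\ref{T:2.2}, carrying out both a Hahn--Banach separation and the exhaustion calculation inside the Pfaff kernel $\bigcap_j \ker\alpha_j$. Throughout, ``directed by $\mathcal S$'' means $T\wedge\alpha_j=0$ for $1\le j\le m$.

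First I would establish a Pfaff analogue of Proposition~\ref{P:2.1}. Let
\[
\mathcal C_\mathcal S := \bigl\{ T\geq 0\ :\ \text{bidim}(1,1),\ \supp T\subset\partial U,\ T\wedge\alpha_j=0\ \text{for all } j,\ \langle T,\omega\rangle=1 \bigr\},
\]
and let $Y$ be the closed subspace of $\ddbar$-closed currents on $M$. I claim $\mathcal C_\mathcal S\cap Y$ coincides with the set of Levi currents for $\mathcal S$ on $\partial U$ in the sense of \eqref{e:6.3}. One inclusion is by definition; for the other, take $T\in\mathcal C_\mathcal S\cap Y$. Expanding $\langle T,i\ddbar r^2\rangle=0$ and using $r=0$ on $\supp T$ gives $T\wedge i\partial r\wedge\overline\partial r=0$, hence $T\wedge\partial r=0$, exactly as in Proposition~\ref{P:2.1}. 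Pairing $T$ against $i\ddbar(\chi r)$ for arbitrary test $\chi\ge 0$ and using $r=0$ on $\supp T$ together with $T\wedge\partial r=0$ leaves $\langle T,\chi\,i\ddbar r\rangle=0$. Since $T$ is directed by $\ker\partial r\cap\bigcap_j\ker\alpha_j$, and \eqref{e:6.2} says precisely that $i\ddbar r\ge 0$ on that subspace, we have $T\wedge i\ddbar r\ge 0$; the vanishing of its pairing against arbitrary $\chi\ge 0$ then forces $T\wedge i\ddbar r=0$, so $T$ is a Levi current for $\mathcal S$ on $\partial U$.

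By hypothesis $\mathcal C_\mathcal S\cap Y=\emptyset$, so Hahn--Banach strongly separates them (the support condition makes $\mathcal C_\mathcal S$ convex and weakly compact in the dual of a reflexive space): there is a smooth test function $v$ on $M$ and $\delta>0$ with $\langle i\ddbar v,T\rangle\ge\delta$ for every $T\in\mathcal C_\mathcal S$. Evaluated on Dirac-concentrated directions $it_z\wedge\overline t_z$ with $z\in\partial U$ and $\langle\alpha_j(z),t_z\rangle=0$, this yields $\langle i\ddbar v(z),it_z\wedge\overline t_z\rangle>0$ for every nonzero $t_z$ in the Pfaff kernel at $z\in\partial U$, and by continuity the inequality persists in a neighborhood of $\partial U$. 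I would then run the calculation of Theorem~\ref{T:2.2} verbatim, evaluating the quadratic form $D(t)$ only on $t\in\bigcap_j\ker\alpha_j(z)$. The boundary estimate \eqref{e:2.1} on $\langle i\ddbar r,it\wedge\overline t\rangle$ follows from \eqref{e:6.2} by the same first-order expansion of $\partial r$, valid now for $t$ in the Pfaff kernel; the positivity $\mathcal L v\ge c|t|^2$ on that kernel comes from the Hahn--Banach step; and, if $v$ is bounded, the Cauchy--Schwarz-type bound $i\partial v\wedge\overline\partial v\le i\ddbar v$ is secured by replacing $v$ by $Cv^2$. The absorption $A\simeq 1/(2\sqrt\eta)$ with $\eta$ small then delivers $i\ddbar\hat\rho\gtrsim|\hat\rho|\omega$ on the Pfaff kernel, which is the second assertion. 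For the first, $\psi:=-\log(-\hat\rho)$ is a smooth exhaustion of $U$ and its Hessian on the Pfaff kernel is bounded below by $c\omega$ outside a compact set, witnessing that $U$ is strongly $\mathcal S$-pseudoconvex in the sense of \eqref{e:6.1}.

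The main technical obstacle, as in Theorem~\ref{T:2.2}, is the bookkeeping in the expansion of $D(t)$: because \eqref{e:6.2} provides sign information only on $\ker\partial r\cap\bigcap_j\ker\alpha_j$ rather than on the full complex tangent space, every invocation of $\mathcal L r\ge 0$ during the absorption argument must be checked against the extra constraint $\langle\alpha_j,t\rangle=0$. This is also what forces the cone in the Hahn--Banach step to be $\mathcal C_\mathcal S$ rather than the larger cone from Proposition~\ref{P:2.1}: otherwise the separating $v$ would have to be strictly plurisubharmonic on a strictly larger subspace than the $\mathcal S$-pseudoconvexity hypothesis can supply, and the computation would not close.
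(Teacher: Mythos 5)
Your proposal follows essentially the same route as the paper's sketch: one first shows that any positive $\ddbar$-closed current on $\partial U$ directed by $\mathcal S$ is automatically a Levi current for $\mathcal S$ in the sense of \eqref{e:6.3}, then Hahn--Banach separation produces a smooth function strictly $\mathcal S$-psh near $\partial U$, and finally the computation of Theorem \ref{T:2.2} is rerun on the Pfaff kernel. The only cosmetic difference is that the paper works with the inequality $\langle i\ddbar r(z), it\wedge\bar t\rangle \ge -C|\langle\partial r(z),t\rangle|\,|t| - C\sum_{j}|\langle\alpha_j(z),t\rangle|\,|t|$ for arbitrary $t$ near $\partial U$, while you restrict to $t\in\bigcap_j\ker\alpha_j(z)$, which requires expanding the $\alpha_j$ (and not only $\partial r$) between $z$ and its boundary projection $z_1$; both devices control the same error terms.
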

\proof[Sketch of  proof]
One shows that    if $T\geq 0,$   $T\wedge \alpha_j=0$ for $ 1\leq j\leq m, $ and $i\ddbar T=0,$
then  $T\wedge\partial r=0$ and  $T\wedge i\ddbar r=0.$  So a duality argument  implies that   if  there is   no  Levi current for the  system $\mathcal S$  on $\partial U,$ there is   a  function $v\in\mathcal P$ which is   strictly $\mathcal S$-psh, i.e., $\langle  i\ddbar v(z),t_z\wedge \bar{t}_z\rangle >0$ when   $\langle   \alpha_j,t_z\rangle=0$ for $1\leq j\leq m.$  The proof then follows the lines  of the proof of  Theorem
\ref{T:2.2} using relations  like
$$
\langle   i\ddbar r(z),t_z\wedge \bar{t}_z\rangle \geq   -C|\langle \partial r(z),t\rangle|  |t| -C\sum_{j=1}^m|\langle \alpha_j(z),t\rangle||t|
$$
for $z$ in a neighborhood  of $\partial U.$
\endproof
\begin{remark}\rm
In  \cite {BerndtssonSibony} and  \cite{S1},  H\"ormander  type  estimates, for  $\dbar$ with respect to $\mathcal S$-directed currents  with respect  to $\mathcal S$-pseudoconvex   functions
as  weights, are given.
\end{remark}
\begin{theorem}\label{T:6.2}
Suppose  $U\subset M$  admits a continuous   exhaustion  function which  is $\mathcal S$-pseudoconvex. Then  $U$
admits  a  strictly $\mathcal S$-pseudoconvex exhaustion  if and only if    there  is  no Levi current, for
the system $ \mathcal S,$  with compact support in  $U.$ 
\end{theorem}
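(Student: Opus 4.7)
The plan is to mirror the proof of Theorem~\ref{T:4.3}, substituting $\mathcal S$-psh functions for continuous psh functions and Levi currents for the system $\mathcal S$ for ordinary Levi currents throughout. The easy direction is immediate: if a continuous strictly $\mathcal S$-psh exhaustion $v$ exists and $T$ is a Levi current for $\mathcal S$ with compact support, then $T\wedge\alpha_j=0$ forces $T$ to be directed along $\bigcap_j\ker\alpha_j(z)$; since the pairing $\langle T,i\ddbar v\rangle$ is well defined by the $\mathcal S$-adapted analogues of estimates \eqref{e:4.2}--\eqref{e:4.3} (cited in the paragraph before the statement), strict $\mathcal S$-positivity gives $\langle T,i\ddbar v\rangle>0$ unless $T=0$; but by the defining condition $T\wedge i\ddbar v=0$, so $T=0$.

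For the hard direction, assume no Levi current for $\mathcal S$ has compact support in $U$. I would first establish the analogue of the key observation in Proposition~\ref{P:2.1}: for any compact $K\subset U$, every positive $\ddbar$-closed bidimension-$(1,1)$ current $S$ supported in $K$ with $S\wedge\alpha_j=0$ is automatically a Levi current for $\mathcal S$, hence vanishes. This uses the extension of formula \eqref{e:4.1} to $u\in\overline{\mathcal P}_S$ acting on $\mathcal S$-directed $\ddbar$-closed currents: one gets $i\ddbar u\wedge S=-\ddbar(uS)=0$, giving all conditions in the definition. Then, applying Hahn--Banach exactly as in Proposition~\ref{P:2.1} to the convex compact set of mass-one positive bidimension-$(1,1)$ currents supported in $K$ with $T\wedge\alpha_j=0$, strongly separated from the subspace of $\ddbar$-exact test forms, produces a smooth test function $v_K$ strictly $\mathcal S$-psh on a neighborhood of $K$.

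Using the continuous $\mathcal S$-psh exhaustion $\varphi$, I choose a compact exhaustion $K_n:=\{\varphi\leq a_n\}$ with $K_{n-1}\Subset K_n\nearrow U$. For each $n$ take $v_n$ smooth and strictly $\mathcal S$-psh near $K_{n+1}$, then select a convex increasing $\chi_n$ so that
\[
\chi_n(\varphi)<\inf_{K_n}v_n\quad\text{on}\quad K_{n-1},\qquad \chi_n(\varphi)>\sup v_n\quad\text{near}\quad \partial K_{n+1}.
\]
Setting $u_n:=\sup(v_n,\chi_n(\varphi))$, extended by $\chi_n(\varphi)$ outside a neighborhood of $K_{n+1}$, yields a globally defined continuous $\mathcal S$-psh exhaustion of $U$ which is strictly $\mathcal S$-psh near $K_{n-1}$. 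Choosing $\epsilon_n>0$ with $\epsilon_n\|u_n\|_{K_n}<2^{-n}$, the series $u:=\sum_n\epsilon_n u_n$ converges locally uniformly to a continuous strictly $\mathcal S$-psh function on $U$; adding $\chi(\varphi)$ for a convex increasing $\chi$ of sufficiently rapid growth recovers the exhaustion property, and a Richberg-type smoothing (compatible with $\mathcal S$-pseudoconvexity since the condition is on a fixed subbundle of directions) delivers a smooth strictly $\mathcal S$-psh exhaustion.

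The main obstacle is the intermediate claim in the second paragraph: that every positive $\ddbar$-closed current directed by $\mathcal S$ with compact support is automatically a Levi current for $\mathcal S$. This relies on having the wedge-product theory of \cite{DS1} in the full $\mathcal S$-directed setting, which the text invokes but does not reprove. A secondary point requiring a line of verification is that $\sup(v,w)$ is $\mathcal S$-psh when $v,w$ are (it reduces to the usual statement by restriction to curves tangent to $\bigcap_j\ker\alpha_j$), and that strict $\mathcal S$-positivity propagates through the sup at points where the strictly $\mathcal S$-psh summand realizes the maximum. Everything else is a direct transcription of the proof of Theorem~\ref{T:4.3}.
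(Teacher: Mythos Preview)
Your proposal is correct and is exactly what the paper intends: the paper's own proof consists of the single sentence ``The proof is an adaptation of the proof in Section~\ref{S:Levi}. We omit it.'' You have carried out precisely that adaptation, mirroring Theorem~\ref{T:4.3} step by step with the $\mathcal S$-directed versions of currents and psh functions, and you have correctly identified that the only non-formal point is the validity of the wedge-product estimates \eqref{e:4.2}--\eqref{e:4.3} in the $\mathcal S$-directed setting, which the paper explicitly invokes from \cite{S1} in the paragraph preceding Theorem~\ref{T:6.1}.
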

The proof is  an   adaptation   of  the proof  in Section  \ref{S:Levi}. We omit it.


\noindent
Nessim Sibony, Universit{\'e} Paris-Sud,\\
\noindent
and Korea  Institute For Advanced Studies, Seoul \\
\noindent
{\tt Nessim.Sibony@math.u-psud.fr},


\begin{thebibliography}{99}
 \addcontentsline{toc}{section}{References}

   \bibitem{B}  Berndtsson, Bo. 
  $L^2$-methods for the $\bar\partial$-equation. preprint, available at http://www.math.chalmers.se/$\sim$bob/   

 
 
 \bibitem{BerndtssonSibony}  
 Berndtsson, Bo; Sibony, Nessim.
The $\dbar$--equation on a positive current. {\it  Invent. math.} {\bf 147} (2002), no. 2, 371-428.
 

\bibitem{DF1} 
 Diederich, Klas; Fornaess, John Erik.  Pseudoconvex domains: bounded strictly plurisubharmonic exhaustion functions. {\it Invent. Math.} {\bf  39} (1977), no. 2, 129-141. 
\bibitem{DF2} 
 Diederich, Klas; Fornaess, John Erik.  Pseudoconvex domains with real-analytic boundary. {\it Ann. Math. (2)} {\bf 107} (1978), no. 2, 371-384.
\bibitem{DF3} 
Diederich, Klas; Fornaess, John Erik. A smooth pseudoconvex domain without pseudoconvex exhaustion. {\it Manuscripta Math.} {\bf 39} (1982), no. 1, 119-123.
\bibitem{DO} 
Diederich, Klas; Ohsawa, Takeo. A Levi problem on two-dimensional complex manifolds. {\it Math. Ann.}  {\bf 261} (1982), no. 2, 255-261.

 \bibitem{DoF}
  Donnelly, Harold; Fefferman, Charles. $L^2$-cohomology and index theorem for the Bergman metric. {\it  Ann. of Math. (2)}  {\bf 118} (1983), no. 3, 593-618.

 \bibitem{DS1}
Dinh Tien-Cuong; Sibony Nessim.  Pull-back currents by holomorphic maps, {\it Manuscripta Math.} {\bf 123} (2007), no. 3, 357-371.

 \bibitem{DS2}
Dinh Tien-Cuong; Sibony Nessim. Dynamics in several complex variables: endomorphisms of
  projective spaces and polynomial-like mappings, 165-294, {\it Lecture Notes in Math.}, {\bf 1998}, Springer, Berlin, 2010.

 

\bibitem{E} 
 Elencwajg, Georges. Pseudo-convexit\'e locale dans les vari\'et\'e k\"ahl\'eriennes.  {\it  Ann. Inst. Fourier (Grenoble)} {\bf 25} (1975), no. 2, xv, 295-314.
 \bibitem{FS} 
Fornaess, John Erik; Sibony, Nessim. Oka's inequality for currents and applications. {\it Math. Ann.}  {\bf 301} (1995), no. 3, 399-419.
 \bibitem{FS2} 
Fornaess, John Erik; Sibony, Nessim. Riemann surface laminations with singularities. J. Geom. Anal. 18 (2008), no. 2, 400?442.
\bibitem{G1} 
Grauert, Hans. On Levi's problem and the imbedding of real-analytic manifolds. {\it Ann. of Math. (2)}  {\bf  68} (1958) 460-472.

 \bibitem{G2} 
 Grauert, Hans.  Bemerkenswerte pseudokonvexe Mannigfaltigkeiten. {\it  Math. Z.}  {\bf  81}  (1963) 377-391.
 \bibitem{G3} Grauert, H.; Kantenkohomologie. Compositio Math. 44 (1981), no. 1-3, 79-101. 
\bibitem{H1} 
 Hirschowitz, Andr\'e. Pseudoconvexit\'e au-dessus d'espaces plus ou moins homog\`enes. {\it Invent. Math.} {\bf 26} (1974), 303-322.
 
 \bibitem{H2}   
  Hirschowitz, Andr\'e. Le probl\`eme de L\'evi pour les espaces homog\`enes.  {\it Bull. Soc. Math. France}  {\bf 103} (1975), no. 2, 191-201.  
  
  \bibitem{Ho}   H\"ormander, Lars.  An introduction to complex  analysis in several variables, 3rd. ed. North Holland (1988).
   \bibitem{K}
   Kaufmann, Lucas. Self-intersection of foliated cycles on complex manifolds. {\tt arXiv:1602.07238.}
   
   
   \bibitem{M}  
    Michel, Daniel. Sur les ouverts pseudo-convexes des espaces homog\`enes.  {\it C. R. Acad. Sci. Paris S\'er. A-B}  {\bf  283} (1976), no. 10, 779-782.
   
   
   
   \bibitem{N1}
Narasimhan, Raghavan. The Levi problem for complex spaces. II.  {\it Math. Ann.} {\bf  146} (1962) 195-216.
    \bibitem{N2}   
    Narasimhan, Raghavan. The Levi problem and pseudo-convex domains: a survey. {\it Enseign. Math. (2)} {\bf 24} (1978), no. 3-4, 161-172.
\bibitem{OS} 
Ohsawa, Takeo; Sibony, Nessim. Bounded p.s.h. functions and pseudoconvexity in K\"ahler manifold. {\it Nagoya Math. J.} {\bf 149} (1998), 1-8.
\bibitem{O} 
 Ohsawa, Takeo. A survey of Levi flat hypersurfaces. Proceedings of the Abel Symposium 2013, NTNU, Trondheim. 
  \bibitem{P} 
 Peternell, Thomas. Pseudoconvexity, the Levi problem and vanishing theorems. Complex analysis. Several variables, 7 Encyclopedia of Mathematical Sciences. Volume 74. Springer Verlag.
  \bibitem{R} 
 Richberg, Rolf. Stetige streng pseudokonvexe Funktionen. {\it Math. Ann.} {\bf  175}  (1968) 257-286. 
\bibitem{S1}
 Sibony, Nessim. Pfaff systems, currents and hulls. {\it Math.Z.} (2016)
 \bibitem{S2}
Sibony, Nessim. Quelques probl{\`e}mes de 
prolongement de courants en analyse complexe, 
\textit{Duke Math. J.}, \textbf{52} (1985),  no.1, 157-197.
\bibitem{Siu}
Siu, Yum Tong. Pseudoconvexity and the problem of Levi. {\it Bull. Amer. Math. Soc.} {\bf  84} (1978), no. 4, 481-512. 
  \bibitem{Skoda}
Skoda,  Henri. Prolongement des courants positifs, ferm{\'e}s de
masse finie, \textit{Invent. Math.} \textbf{66} (1982), 361-376.

   \bibitem{Su}  Sullivan, Dennis. Cycles for the dynamical study of foliated manifolds in complex manifolds.  
{\it Invent. Math.} {\bf 36}, 225-255, (1975).
\bibitem{T}
 Takeuchi, Akira. Domaines pseudoconvexes infinis et la m\'etrique riemannienne dans un espace projectif.   {\it J. Math. Soc. Japan} {\bf 16} (1964)
 159-181.
  
 \end{thebibliography}
\end{document}